\documentclass[a4paper,11pt]{article}

\usepackage{amsfonts,amsmath,amssymb,amsthm}
\usepackage{thmtools}                              % cref fix: keep after amsthm
\usepackage{mathtools}
\usepackage{esint}
\usepackage{mathrsfs}
\usepackage[mathscr]{eucal}                    % \mathscr comes from here

\usepackage[margin=2.8cm]{geometry}
\usepackage{enumitem}
\usepackage{wrapfig}
\usepackage[font=small,justification=centering]{caption}
\usepackage{xspace}

\usepackage[dvipsnames]{xcolor}
\usepackage{tikz} \usetikzlibrary{angles,quotes,calc,intersections}

\usepackage[backend=biber, url=false, style=numeric-comp, sorting=nyt, giveninits=true,isbn=false]{biblatex}
\DeclareFieldFormat{prefixnumber}{\mkbibbold{#1}}
\DeclareFieldFormat{labelnumber}{\mkbibbold{#1}}

\usepackage[hidelinks]{hyperref}
\usepackage{cleveref}
\newtheorem{theorem}{Theorem}[section]
\newtheorem{proposition}[theorem]{Proposition}

\newtheorem{lemma}[theorem]{Lemma}
\newtheorem{corollary}[theorem]{Corollary}

\newtheorem{problem}[theorem]{Problem}
\theoremstyle{definition}
\newtheorem{definition}[theorem]{Definition}
\newtheorem{assumption}[theorem]{Assumption}\newtheorem*{assumption*}{Assumption}
\newtheorem{example}[theorem]{Example}

\theoremstyle{remark}
\newtheorem{remark}[theorem]{Remark}
\theoremstyle{plain}
\newtheorem{thmx}{Theorem}

\crefformat{equation}{(#2#1#3)}
\Crefformat{equation}{(#2#1#3)}
\crefformat{theorem}{Theorem~#2#1#3}
\Crefformat{theorem}{Theorem~#2#1#3}
\crefformat{thmx}{Theorem~#2#1#3}
\Crefformat{thmx}{Theorem~#2#1#3}
\crefformat{lemma}{Lemma~#2#1#3}
\Crefformat{lemma}{Lemma~#2#1#3}
\crefformat{proposition}{Proposition~#2#1#3}
\Crefformat{proposition}{Proposition~#2#1#3}
\crefformat{remark}{Remark~#2#1#3}
\Crefformat{remark}{Remark~#2#1#3}
\crefformat{definition}{Definition~#2#1#3}
\Crefformat{definition}{Definition~#2#1#3}
\crefformat{example}{Example~#2#1#3}
\Crefformat{example}{Example~#2#1#3}
\crefformat{problem}{Problem~#2#1#3}
\Crefformat{problem}{Problem~#2#1#3}
\crefformat{assumption}{Assumption~#2#1#3}
\Crefformat{assumption}{Assumption~#2#1#3}
\crefformat{section}{Section~#2#1#3}
\Crefformat{section}{Section~#2#1#3}
\crefformat{corollary}{Corollary~#2#1#3}
\Crefformat{corollary}{Corollary~#2#1#3}

\DeclareMathOperator{\rank}{rank}
\DeclareMathOperator{\dist}{dist}
\DeclareMathOperator{\proj}{proj}
\DeclareMathOperator{\myspan}{span}
\DeclareMathOperator{\id}{id}
\DeclareMathOperator{\prob}{Prob}
\DeclareMathOperator{\Var}{Var}

\newcommand{\Asp}{\mathcal{A}}
\newcommand{\Afv}{{\mathbb{A}}}
\newcommand{\Bsp}{\mathcal{B}}
\newcommand{\Mfv}{\mathbb{M}}

\newcommand{\sou}{\mathbb{V}}
\newcommand{\tar}{\mathbb{W}}
\newcommand{\presou}{\mathbb{U}}
\newcommand\R{\mathbb R}

\newcommand{\var}{X}
\newcommand{\varalt}{Y}
\newcommand{\pol}{\var_0}
\newcommand{\dircone}{Z}
\newcommand{\compSys}{\mathcal{K}}
\newcommand{\lin}[1]{\ell_{#1}}
\newcommand{\slin}[1]{\ell^+_{#1}}
\newcommand{\bdsubset}{K}

\newcommand{\Meas}{\mathcal{M}}
\newcommand{\ball}{B}
\newcommand{\sphere}{S}
\newcommand{\resmes}{\mathbin{\vrule height 1.6ex depth 0pt width 0.18ex\vrule height 0.15ex depth 0pt width 1.1ex}}
\newcommand{\predual}{\mathbf{E}}
\newcommand{\predualAqc}{\predual_{\Asp}}
\newcommand{\Ebf}{\mathbf E}
\newcommand{\Env}{\mathscr{C}}
\newcommand{\QC}{\mathcal{Q}}
\newcommand{\regC}{C}
\newcommand{\ckorn}{\varpi}
\newcommand{\const}[1]{\ifx\\#1\\\omega\else \omega(#1)\fi}

\newcommand{\toweakstar}{\stackrel{*}\rightharpoonup}
\newcommand{\st}{\, : \,}
\newcommand{\mysm}{\smallsetminus}
\newcommand{\dpr}[1]{\left<#1\right>}
\newcommand\eps{\varepsilon}
\newcommand{\Raita}{Rai\c{t}\u{a}\xspace}

\makeatletter
\renewcommand*\author[1]{%
	\stepcounter{author}%
	\ifnum\c@author=1
	\gdef\@author{#1}%
	\else
	\xdef\@author{\unexpanded\expandafter{\@author\and#1}}%
	\fi
	\csgdef{author@\the\c@author}{#1}}
\newcommand*\email[1]{%
	\csgdef{email@\the\c@author}{#1}}
\newcommand*\address[1]{%
	\csgdef{address@\the\c@author}{#1}}
\AtEndDocument{%
	\xdef\author@count{\the\c@author}%
	\c@author=1
	\print@authors}
\newcommand*\print@authors{%
	\ifnum\c@author>\author@count
	\else
	\print@author{\the\c@author}%
	\advance\c@author by 1
	\expandafter\print@authors
	\fi}
\newcommand*\print@author[1]{%
	\par\medskip
	\begin{tabular}{@{}l@{}}%
		\textsc{\csuse{author@#1}}\\
		\csuse{address@#1}\\
		E-mail address:
		\href{mailto:\csuse{email@#1}}{\csuse{email@#1}}
\end{tabular}}
\makeatother

\title{Quantified rigidity of {$\mathcal{A}$}-free Young measure concentrations}
\author{Adolfo Arroyo-Rabasa}
\address{Dipartimento di Matematica\\
	Universit\`a di Pisa\\
	Largo Bruno Pontecorvo 5\\
	56127 Pisa, Italy}
\email{adolfo.rabasa@unipi.it}
\author{Averil Aussedat}
\address{Dipartimento di Matematica\\
	Universit\`a di Pisa\\
	Largo Bruno Pontecorvo 5\\
	56127 Pisa, Italy}
\email{averil.aussedat@dm.unipi.it}
\begin{document}
\maketitle
\begin{abstract}
We establish quantitative stability of the De Philippis--Rindler rigidity on conical convex regions. By constructing $\mathcal{A}$-quasiconvex functions with a tailored concavity---which additionally yields a new, elementary proof of the original theorem for constant-rank operators---we prove a spreading inequality that strictly limits the angular concentration of $\mathcal{A}$-free measures. This geometric bound establishes that singular concentrations cannot cluster arbitrarily close to a direction outside the Tartar wave cone. As a consequence, we obtain a new $L^1$ compensated compactness result: $\mathcal{A}$-free sequences with uniformly bounded mass are forced to be equi-integrable, thereby preventing the formation of mass concentrations, provided their targets are asymptotically restricted to cones whose aperture is controlled by a power of the distance to the wave cone of $\mathcal A$.
	\par\medskip\noindent\textbf{Mathematics Subject Classification (2020).} Primary 49J45, 35E20, 35R06; Secondary 49Q20.

	\medskip\noindent\textbf{Keywords.} Compensated compactness, $\mathcal A$-free measures, wave cone, generalized Young measures, quasiconvexity, mass, concentration, rank-one theorem.
\end{abstract}
%%%%%%%%%%%%%%%%%%%%%%%%%%%%%%%%%%%%%%%%%%%%%%%%%%%%%%%%%%%%%%%%%%%%%%%%%%%%%%
%%  1.  INTRODUCTION
%%%%%%%%%%%%%%%%%%%%%%%%%%%%%%%%%%%%%%%%%%%%%%%%%%%%%%%%%%%%%%%%%%%%%%%%%%%%%%
\section{Introduction}
The rank-one theorem stands as a cornerstone in the fine structure theory of functions of bounded variation, with far-reaching consequences for the calculus of variations and geometric measure theory. Established by Alberti~\cite{albertiRankOneProperty1993} (see also~\cite{deLellis_note_rank_one,massaccesi_vittone}), the result reveals a striking algebraic rigidity governing the singular behavior of curl-free measures (we refer the reader to \Cref{sec:prel} for basic notations):
\begin{theorem}[Rank-one theorem, Alberti \cite{albertiRankOneProperty1993}]\label{thm:rank_one}
	Let $\Omega \subset \R^d$ be an open set and let $u: \Omega \to \R^m$ be a locally integrable field whose distributional gradient $Du$ is represented by an $\R^{m \times d}$-valued locally finite Radon measure. Then,
	\begin{equation}\label{eq:geometric}
		\operatorname{rank} \left( \frac{Du}{|Du|}(x) \right) = 1 \quad \text{for } |Du|^s\text{-almost every } x \in \Omega.
	\end{equation}
\end{theorem}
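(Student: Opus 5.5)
The plan is to derive \Cref{thm:rank_one} from the De Philippis--Rindler approach via tangent measures and the structure of $\mathcal A$-free measures, specialised to $\mathcal A=\operatorname{curl}$. Set $\mu = Du$. Then $\mu$ is curl-free in the sense that each row $\mu_i = Du_i$ satisfies $\partial_j \mu_{ik} - \partial_k \mu_{ij} = 0$. The wave cone of the curl operator, $\Lambda_{\operatorname{curl}} = \bigcup_{\xi \in \sphere^{d-1}} \ker \mathbb{A}(\xi)$, is exactly the set of matrices $a\otimes \xi$, i.e.\ the rank-one matrices. So it suffices to show that the polar $P=\frac{d\mu}{d|\mu|}(x)$ lies in $\Lambda_{\operatorname{curl}}$ for $|\mu|^s$-a.e.\ $x$.

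First, I would fix a point $x_0$ where $|\mu|^s$ is concentrated, the polar $P(x_0)$ is a Lebesgue point of $P$ with respect to $|\mu|$, and $\frac{d|\mu|}{d\mathcal L^d}(x_0)=\infty$. Blowing up at such points (Preiss), one obtains a nonzero tangent measure of the form $\nu = P_0\,\tau$, where $P_0=P(x_0)$ is a fixed matrix and $\tau\ge 0$ is a Radon measure. The measure $\nu$ is again curl-free, since the constraint is linear, homogeneous and first order, hence invariant under rescaling. Moreover, because $x_0$ is a singular point, $x_0$ can be chosen so that some tangent measure $\tau$ is not a multiple of Lebesgue measure. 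This is the standard fact that at $|\mu|^s$-a.e.\ point not all tangents are Lebesgue; at most one needs to pass to iterated tangents.

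Second, I would Fourier-transform the constraint. The condition $\operatorname{curl}(P_0\tau)=0$ gives $\mathbb A(\xi)P_0\,\hat\tau(\xi)=0$ for all $\xi$, as tempered distributions. If $P_0\notin\Lambda_{\operatorname{curl}}$, then $\mathbb A(\xi)P_0\ne 0$ for every $\xi\neq 0$, so $\hat\tau$ is supported at $\{0\}$. This makes $\tau$ a polynomial, and since $\tau$ is a positive measure with polynomial growth arising as a tangent measure, it must be a constant multiple of $\mathcal L^d$. That contradicts the choice of $x_0$. Concretely, for gradients: if $P_0$ has rank at least $2$, the rows of $P_0$ span a subspace $V$ of dimension at least $2$. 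Curl-freeness forces $\tau$ to be invariant under translations along every direction orthogonal to the row span of some combination. Iterating over the rows shows that $\tau$ is invariant along $\ker P_0^{\perp}$ directions in a way that forces full translation invariance.

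The main obstacle is the first step: guaranteeing a tangent measure that is not Lebesgue while keeping the polar constant. The cleanest fix is to choose $x_0$ with $\lim_r |\mu|(B_r(x_0))/r^d=\infty$. Normalising blow-ups by $|\mu|(B_r)$ then yields tangents that are not absolutely continuous with bounded density near $0$ in a uniform sense, so a Lebesgue limit is excluded. I expect to need the Preiss "tangents of tangents are tangents" principle to make this rigorous.
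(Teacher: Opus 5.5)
There is a genuine gap, and it is exactly the step you flagged as the main obstacle. The assertion that at $|\mu|^s$-a.e.\ point some tangent measure is not a multiple of $\mathscr L^d$ is false in general. Preiss constructed a singular measure, already in dimension one, all of whose tangent measures at almost every point are constant multiples of Lebesgue measure. Iterating tangents cannot repair this, since every tangent of $c\,\mathscr L^d$ is again a multiple of $\mathscr L^d$. Your proposed fix via $|\mu|(\ball_r(x_0))/r^d \to \infty$ also fails: normalising by $|\mu|(\ball_r(x_0))$ erases precisely this information, so infinite density is perfectly compatible with the tangent $\mathscr L^d\resmes\ball_1/\mathscr L^d(\ball_1)$. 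Your second step is essentially correct for $\operatorname{curl}$. If $\rank P_0 \ge 2$, each row $a$ of $P_0$ forces $\nabla\tau \parallel a$, hence invariance of $\tau$ along $a^\perp$, and two independent rows give $\tau = c\,\mathscr L^d$. (This uses the first-order structure; a positive polynomial measure need not be constant in general.) But this yields no contradiction, because a Lebesgue tangent is exactly what can occur at the singular points you cannot discard by soft measure-theoretic arguments.

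The missing idea is that the contradiction must be extracted from the blow-up \emph{sequence}, not from its limit. At a $|\mu|^s$-Lebesgue point with $P_0 \notin \Lambda_\Asp$, the rescalings satisfy $|\mu_j|^a(\ball_1) \to 0$ and $\int \dist(\frac{d\mu_j}{d|\mu_j|},\slin{P_0})\,d|\mu_j| \to 0$. In other words, they are almost purely singular with almost constant polar. In \Cref{sec:dpr} the paper uses ellipticity only to get $\tau = \theta\,\mathscr L^d$ with $\theta$ smooth. It then \emph{deliberately} blows up a second time at a point where $\theta_0 = \theta(y_0) > 0$, producing $\Asp$-free, asymptotically singular measures $\tilde\mu_j \toweakstar \theta_0 P_0\, \mathscr L^d$ with $|\tilde\mu_j| \toweakstar \theta_0\,\mathscr L^d$. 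What rules this out is a rigidity statement along the sequence. After mollification and projection onto $\Asp$-free periodic fields, one gets $f(\theta_0 P_0)\le f(0)+f^\infty(\theta_0 P_0)$ for every Lipschitz $\Asp$-quasiconvex $f$. This is violated by the function of \Cref{res:spreading}\ref{item:spreading:exif}, whose construction rests on the weak-$L^1$ Korn inequality of \Cref{res:korntype}. De Philippis and Rindler close the same gap instead with a pseudo-differential $L^1$ decomposition of the blow-ups. Either way, some $L^1$-type estimate on the approximating sequence is indispensable; ellipticity of $\tau \mapsto \Asp(P_0\tau)$ applied to the tangent alone cannot conclude.
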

Over the last decade, it has become increasingly clear that this measure-theoretic result is not an isolated phenomenon, but rather the blueprint for a much broader rigidity shared by a large class of PDE constraints. To understand how this rigidity extends beyond curl-free measures, one may view the phenomenon through the lens of $L^1$-approximate \emph{compensated compactness}. Given two finite dimensional normed real vector spaces $\sou,\tar$ of dimensions $N$ and $M$, which we can identify with $\R^N$ and $\R^M$, the fundamental question is to identify the sets $\compSys \subset \sou$ for which a sequence $(v_j)_j \subset L^1(\Omega,\sou)$ satisfying the compensated system
\begin{equation}\label{eq:compensated_system}
	\begin{cases}
		\begin{aligned}
			\Asp v_j &= 0 \quad \text{in the sense of distributions,}\\
			v_j &\toweakstar v \quad \text{in the sense of measures,}\\
			\quad \dist(v_j, \compSys) &\to 0 \quad \text{in $L^1(\Omega)$,}
		\end{aligned}
	\end{cases}
\end{equation}
automatically upgrades from weak-$*$ convergence to either weak $v_j\rightharpoonup v$ or strong $v_j \to v$ convergence in $L^1$, thereby preventing the formation of mass concentrations.
In this general framework, $\Asp$ denotes a system (of $M$ equations over $N$ unknowns) of the form
\begin{equation}\label{eq:A}
	\Asp v = \sum_{|\alpha|= k} A_\alpha \partial^\alpha v, \qquad A_\alpha \in \mathrm{Hom}(\sou,\tar),
\end{equation}
acting on vector-valued distributions $v \in \mathcal D'(\Omega,\mathbb V)$, and $\compSys \subset \mathbb V$ denotes a set of preferred target configurations. By transitioning from the classical gradient setting ($\R^{m \times d}$) to this broader context ($\sou$), the curl-free condition is replaced by the general $\Asp$-free constraint. Consequently, the cone of rank-one matrices is naturally superseded by the Tartar wave cone $\Lambda_{\Asp} \subset \sou$ (see~\cite{tartarCompensatedCompactnessApplications1979}). This cone consists of those amplitudes $\pol \in \sou$ that support one-dimensional \emph{$\Asp$-free laminates}---that is, constant states for which there exists a spatial direction $\xi_0 \in \sphere^{d-1}$ such that for any bounded measurable profile $h : \R \to \{0,1\}$,
\[
\Asp(\pol \, h(x \cdot \xi_0)) = 0 \quad \text{in the sense of distributions.}
\]
In a far-reaching generalization, De Philippis and Rindler~\cite{dephilippisStructureFreeMeasures2016} (see also \cite{arroyorabasaDimensionalEstimatesRectifiability2019}) showed that the wave cone is the essential locus for the singular polar density of any $\Asp$-free measure:
\begin{theorem}[De Philippis--Rindler \cite{dephilippisStructureFreeMeasures2016}]\label{thm:DPR}
	Let $\Asp$ be an operator as in~\eqref{eq:A} and let $\mu \in \Meas(\Omega,\sou)$ be such that $\Asp \mu = 0$ in the sense of distributions. Then,
	\begin{equation}
		\frac{\mu}{|\mu|}(x) \in \Lambda_{\Asp} \qquad \text{for } |\mu|^s\text{-almost every } x \in \Omega.
	\end{equation}
\end{theorem}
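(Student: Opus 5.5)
We argue by contradiction, combining a blow-up at singular points with a Fourier-multiplier estimate on an elliptic symbol. Suppose the conclusion fails. Then there is a Borel set $E\subset\Omega$ with $|\mu|^s(E)>0$ on which $\frac{\mu}{|\mu|}(x)\notin\Lambda_{\Asp}$.

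\textbf{Step 1: blow-up.} We choose $x_0\in E$ that is simultaneously a Lebesgue point of the polar $\frac{\mu}{|\mu|}$ with respect to $|\mu|$ and a point of $|\mu|^s$-density, and we set $\pol:=\frac{\mu}{|\mu|}(x_0)\notin\Lambda_{\Asp}$. Since $\Asp$ is homogeneous, the rescalings
\[
\mu_r:=\frac{(T^{x_0,r})_\#\mu}{|\mu|(B_r(x_0))},\qquad T^{x_0,r}(x)=\frac{x-x_0}{r},
\]
remain $\Asp$-free. By Preiss' theory of tangent measures, along a subsequence $\mu_r\toweakstar \nu=\pol\,\tau$, where $\tau$ is a nonzero positive tangent measure. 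At $|\mu|^s$-a.e.\ point one can moreover choose tangents along which the absolutely continuous part vanishes in the limit, and we may take $\tau$ to be a singular-type tangent (as in De Philippis--Rindler). Two conclusions follow in the limit: $\Asp(\pol\tau)=0$, and, more quantitatively, $\Asp\mu_r=0$ while $\mu_r-\pol|\mu_r|\to0$ in total variation on compacts.

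\textbf{Step 2: ellipticity in the direction of $\pol$.} Because $\pol\notin\Lambda_{\Asp}$, the scalar symbol $\xi\mapsto \mathbb A^k(\xi)\pol$ vanishes only at $\xi=0$. Hence $|\mathbb A^k(\xi)\pol|\ge c|\xi|^k$ by homogeneity and compactness of the sphere. Writing $\mu_r=\pol u_r+w_r$ with $u_r:=|\mu_r|$ a positive measure and $\|w_r\|\to0$ locally, we get
\[
\Asp(\pol u_r)=-\Asp w_r .
\]
After cutting off with $\varphi\in C_c^\infty$, the commutator terms involve derivatives of order $<k$ of $\mu_r$ and are therefore compact. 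Inverting the elliptic scalar multiplier $\mathbb A^k(\xi)\pol$, for instance via a left inverse $(\mathbb A^k(\xi)\pol)^*/|\mathbb A^k(\xi)\pol|^2$, we obtain
\[
\varphi u_r = \text{(order-zero multiplier)}\,(\varphi w_r) + \text{(smoothing terms)}.
\]
Order-zero multipliers are bounded $L^1\to L^{1,\infty}$ and bounded on $L^p$. This yields that $\varphi u_r$ converges, up to a small error, in $L^{1,\infty}$ or in $W^{-1,p}$, to an $L^1$ function. The conclusion is that the limit $\tau$ is absolutely continuous with respect to Lebesgue measure, with a density obtained via a weak-type estimate together with positivity of $u_r$.

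\textbf{Step 3: contradiction.} This contradicts the choice of $x_0$ as a singular point. At $|\mu|^s$-a.e.\ point there exist tangent measures whose blow-up sequence has $u_r$ non-equi-integrable, i.e.\ concentrating relative to Lebesgue measure, as in the standard Rindler/Preiss argument. Positivity of $u_r$ is what turns the weak-$L^1$ smallness into $L^1$ equi-integrability.

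\textbf{Main obstacle.} The hardest step is the last one. Weak-type $(1,1)$ bounds alone do not prevent concentration, so we must exploit that $u_r\ge0$ and that $\|w_r\|\to0$ in order to upgrade the estimate to genuine equi-integrability. Our first attempt would be a truncation argument: split $w_r$ into a small-$L^1$ part and a bounded part, apply the $L^2$ bound to the bounded piece, and use positivity to control the mass of $u_r$ on sets of small Lebesgue measure.
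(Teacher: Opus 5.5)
\textbf{Where the proof breaks.} Your Steps~1--2 reach the same intermediate point as the paper. The tangent $\pol\tau$ is $\Asp$-free, $\Asp(\pol\,\cdot\,)$ is a scalar elliptic operator because $\pol\notin\Lambda_{\Asp}$, and so $\tau\ll\mathscr L^d$; the paper even gets $\tau=\theta\mathscr L^d$ with $\theta$ smooth.

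This is not yet a contradiction, and the first sentence of your Step~3 is false. A singular measure can have only absolutely continuous tangents (Preiss' examples), so no ``singular-type tangent'' can be selected in general. You then relocate the contradiction correctly: along the blow-up, $u_r$ carries mass bounded below on Lebesgue-null sets, so it is not equi-integrable. But the argument that would produce equi-integrability is exactly what is missing, and the truncation you sketch cannot supply it. Since $w_r\to0$ in total variation already, splitting it into a small-$L^1$ part and a bounded part gains nothing. The loss occurs in the order-zero multiplier, which maps small $L^1$ data only to small $L^{1,\infty}$ data, and positivity alone does not repair this. For instance, $g=\eta|x|^{-d}\mathbf 1_{\{\eps<|x|<\delta\}}$ with $\eta\log(\delta/\eps)=1$ is nonnegative with $\|g\|_{L^{1,\infty}}\lesssim\eta$, yet carries mass of order one on a set of measure $\lesssim\delta^d$. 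So ``$\varphi u_r=f_r+g_r$ with $f_r$ compact in $L^1$, $\|g_r\|_{L^{1,\infty}}\to0$ and $u_r\ge0$'' does not by itself give equi-integrability.

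\textbf{How to close it on your route.} Your route is essentially the original De Philippis--Rindler argument, which needs no constant-rank assumption. What it lacks is one more property of $g_r=T(\varphi w_r)$: it tends to $0$ in $\mathcal D'$, because $\langle T(\varphi w_r),\psi\rangle=\langle\varphi w_r,T^*\psi\rangle$ with $T^*\psi$ bounded. First mollify $\mu_r$, so that $T$ acts on functions and the singular mass becomes mass on sets of vanishing Lebesgue measure. Then positivity gives $g_r\ge-|f_r|$, so $g_r^-$ is dominated by the convergent sequence $|f_r|$ and tends to $0$ in measure, hence in $L^1$ by Vitali. Testing $g_r\to0$ against $\psi\ge0$ then yields $\int\psi g_r^+\to0$. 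Hence $g_r\to0$ in $L^1_{\mathrm{loc}}$ and $\varphi u_r$ is equi-integrable, which is the contradiction you want.

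\textbf{How the paper avoids the issue.} After obtaining $\lambda=\theta\mathscr L^d$, it blows up a second time at a point with $\theta(y_0)=\theta_0>0$. It turns the resulting almost purely singular, $\pol$-directed sequence into periodic $\Asp$-free fields (mollification plus Fonseca--M\"uller projection). This gives $f(\theta_0\pol)\le f(0)+f^\infty(\theta_0\pol)$ for every Lipschitz $\Asp$-quasiconvex $f$, which the integrand $\QC g_{1/2}$ of \Cref{res:spreading}\ref{item:spreading:exif} violates. There the weak-$L^1$ Korn bound enters only through Chebyshev against the bounded concave bump $\kappa$, which is exactly the information $L^{1,\infty}$ carries. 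The price is the constant-rank assumption.
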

The decisive step in De Philippis and Rindler's approach was to bypass the classical geometric tools available only for gradients (such as the area and co-area formulas) in favor of pseudo-differential calculus. They recognized that the rank-one theorem is fundamentally a consequence of a compensated compactness property: $\Asp$-free fields cannot concentrate mass when asymptotically close to a ray generated by an amplitude strictly outside the wave cone. Specifically, their proof reveals that if $\dircone \in \sou \smallsetminus \Lambda_{\Asp}$ and $(v_j)_j$ satisfies~\eqref{eq:compensated_system} with the half-line $\compSys = \slin{\dircone}$, where
\[
\slin{\dircone} \coloneqq \{ t \, \dircone : t \ge 0 \},
\]
then $v_j \to v$ strongly in $L^1$.
Despite its reach, \Cref{thm:DPR} remains a standalone achievement that resolves a compensated compactness system in the critical $L^1$ setting only for the rigid geometry of a one-dimensional half-line. A fundamental open question is whether the De Philippis--Rindler rigidity is structurally stable: can we prevent mass concentrations along $\Asp$-free fields whose targets are asymptotically restricted to \emph{convex cones} of positive aperture and disjoint from the wave cone?
Expanding the target geometry from a ray to a cone poses a substantial analytic challenge. In the critical $L^1$ setting, the strong regularizing effects of constant-coefficient linear elliptic systems break down, mimicking an elliptic system with $L^1$ coefficients where one can generally only establish convergence in measure \cite[Lemma~2.7]{mullerVariationalModelsMicrostructure1999}. Because convergence in measure cannot prevent mass concentrations, exact rigidity is sharply lost: the equi-integrability of a weakly converging sequence $v_j \rightharpoonup v$ in $L^1$ may fail entirely \cite{kristensenCharacterizationGeneralizedGradient2010, arroyo-rabasaCharacterizationGeneralizedYoung2021, arroyo-rabasaHigherIntegrabilityMeasures2024}. Furthermore, this geometric generality dismantles the strict one-dimensional sign constraints underpinning the De Philippis--Rindler proof, exposing the problem to pathologically unbounded singular integrals.
The main goal of this work is to establish a quantifiable stability of the De Philippis--Rindler rigidity on conical convex regions, by providing a strict lower bound for the allowed aperture in terms of the cone's distance to the wave cone $\Lambda_\Asp$.
%%  1.1  Setup and main results  ---------------------------------------------
\subsection{Setup and main results}
To bypass these analytic barriers, we study the fine-scale limits of $L^1$-mass concentrations generated by $\Asp$-free sequences using the framework of generalized Young measures. To capture the escaping of mass to infinity, we radially compactify the state space $\sou$ by attaching the unit sphere of directions at infinity, $\sphere_\infty \cong \sphere_{\sou}$, and consider the angular concentration probability measures $\nu^\infty \in \prob(\sphere_{\sou})$ associated with $\Asp$-free generalized Young measures on $\Omega$ (see \Cref{sec:YM} for precise definitions).
Throughout this work, we operate under the standing assumption that the differential operator $\Asp$ satisfies the \emph{constant-rank property} \cite{SW_constant_rank,muratCompaciteCompensationCondition1981}:
\begin{assumption}\label{ass:constant_rank}
	The symbol matrix $\Afv(\xi) \coloneqq \sum_{|\alpha| = k} A_{\alpha} \xi^{\alpha}$ has constant rank over $\R^d \mysm \{0\}$.
\end{assumption}
Under this structural hypothesis, a key feature of the angular concentration measures---derived in \Cref{sec:Aqcf}---is that their collective state space is convex and compact (with respect to a suitable topology). This topological and geometric regularity, which may be interesting in its own right, is one of the simplifying elements enabling our core arguments:
\begin{thmx}[Convexity of $\Asp$-free concentrations]\label{prop:convexity}
	Let $\Asp$ satisfy the constant rank property. The set of all angular concentration probability measures $\nu^\infty_x \in \prob(\sphere_{\sou})$ that arise, at $\lambda$-almost every point $x$, from an $\Asp$-free generalized Young measure $(\nu,\lambda,\nu^\infty) \in \mathbf Y_\Asp(\Omega)$ is convex and weak-$*$ compact.
\end{thmx}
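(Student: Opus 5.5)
The plan is to characterize the set in question, call it $\mathcal{C}_\Asp \subset \prob(\sphere_{\sou})$, by duality with $\Asp$-quasiconvex positively $1$-homogeneous functions. Convexity and weak-$*$ closedness then follow because the set is cut out by a family of affine inequalities.

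\textbf{Step 1: dual description.} Under \Cref{ass:constant_rank}, the Fonseca--Müller-type characterization of $\Asp$-free generalized Young measures, in the form of Kristensen--Rindler / Baía--Matias--Santos, says the following. A triple $(\nu,\lambda,\nu^\infty)$ belongs to $\mathbf Y_\Asp(\Omega)$ if and only if its barycenter is $\Asp$-free and, for $\lambda^s$-a.e.\ $x$, the Jensen inequality
\[
g\bigl(\dpr{\mathrm{id},\nu^\infty_x}\bigr) \le \dpr{g,\nu^\infty_x}
\]
holds for every positively $1$-homogeneous $\Asp$-quasiconvex $g\colon \sou \to \R$. Here the barycenter $\dpr{\mathrm{id},\nu^\infty_x}$ is the polar density $\frac{d\mu}{d|\mu^s|}$ up to a positive factor. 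Matching inequalities hold at regular points and at the $\lambda^a$-part.

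I would prove the reverse inclusion that is needed: every $\sigma \in \prob(\sphere_{\sou})$ satisfying these Jensen inequalities, with barycenter $\bar\sigma$, is realized as $\nu^\infty_{x}$ at $\lambda$-a.e.\ point of some element of $\mathbf Y_\Asp(\Omega)$. The key ingredient is a localization/blow-up argument at a single point. Take a point mass $\lambda=\delta_{x_0}$ (or $\lambda = \mathcal L^d\resmes\Omega$ with $\nu^\infty_x\equiv\sigma$ homogeneously), and use the characterization theorem to check that the homogeneous triple is $\Asp$-free. This reduces membership in $\mathcal{C}_\Asp$ to
\[
\sigma \in \mathcal{C}_\Asp \iff g(\bar\sigma) \le \dpr{g,\sigma}\ \text{ for all } g \in \QC_\Asp^{1},
\]
where $\QC_\Asp^{1}$ denotes the positively $1$-homogeneous $\Asp$-quasiconvex functions (or a countable dense subfamily of them).

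\textbf{Step 2: convexity.} Convexity is not immediate, because the map $\sigma \mapsto g(\bar\sigma)$ is not affine. To handle this, I would combine a gluing argument with the dual description. Given $\sigma_1,\sigma_2 \in \mathcal{C}_\Asp$, realized by $\Asp$-free sequences, I would build generating sequences on disjoint parts of the domain. For the homogeneous (Lebesgue) realizations, gluing is possible because $\Asp$-free sequences can be cut off with vanishing error, using the constant-rank projection of Fonseca--Müller. Placing the two sequences on complementary regions of volume fractions $t$ and $1-t$ and then blowing up at a point produces the measure $t\sigma_1+(1-t)\sigma_2$ as a tangent Young measure. This uses the standard fact that tangent (localized) Young measures of $\Asp$-free measures remain $\Asp$-free.

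Alternatively, one can stay entirely in the dual picture. Note that $\bar\sigma_t$ is the convex combination of the barycenters, and that $g$ is $\Asp$-convex along wave-cone directions but not convex in general. So a purely dual argument would need the convexity of $\mathcal{C}_\Asp$ to come from the averaging construction anyway. The gluing route is the one I would pursue first.

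\textbf{Step 3: compactness and the main obstacle.} $\prob(\sphere_{\sou})$ is weak-$*$ compact, so it suffices to show that $\mathcal{C}_\Asp$ is closed. If $\sigma_n \toweakstar \sigma$, then $\bar\sigma_n \to \bar\sigma$, because $\mathrm{id}$ is continuous on the sphere. Each $g$ is continuous, being Lipschitz since it is $\Asp$-quasiconvex with linear growth. Hence the inequalities $g(\bar\sigma_n)\le\dpr{g,\sigma_n}$ pass to the limit, and $\sigma$ satisfies the dual characterization.

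The main obstacle is Step 1's sufficiency direction at singular points. There, the Hahn--Banach argument must produce a generating $\Asp$-free sequence whose concentration angle measure is exactly $\sigma$. This requires the full characterization theorem under constant rank, together with the verification that homogeneous triples with prescribed $\nu^\infty$ are admissible. I expect to invoke the existing characterization of $\Asp$-free generalized Young measures for this rather than reprove it.
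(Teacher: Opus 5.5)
There is a genuine gap, and it is in Step~1: the set you describe is not the set of concentration profiles. The inequality $g(\bar\sigma)\le\dpr{g,\sigma}$, tested only against positively $1$-homogeneous $\Asp$-quasiconvex $g$, holds with equality for every Dirac mass $\sigma=\delta_A$, $A\in\sphere_{\sou}$. Yet $\delta_A$ with $A\notin\Lambda_\Asp$ is never a profile:
\begin{itemize}
\item at $\lambda^s$-points, $\bar\nu^\infty_x$ is a positive multiple of the polar of the singular part of the $\Asp$-free barycenter $\bar{\boldsymbol\nu}$ (you note this but never use it), so it lies in $\Lambda_\Asp$ by \Cref{thm:DPR};
\item at points with $\lambda^a(x)>0$, \eqref{charGAFYM:jensen} and \Cref{res:KR} give $f(A)\le f(0)+f^\infty(A)$ for all $f\in\predualAqc(\sou)$, which \Cref{res:spreading}\ref{item:spreading:exif} forbids.
\end{itemize}
Your set is also not convex in general. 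If some positively $1$-homogeneous $\Asp$-quasiconvex $g$ is not convex (as for gradients, by M\"uller's examples \cite{mullerQuasiconvexFunctionsWhich1992}), homogeneity yields unit $A,B$ and $t\in(0,1)$ with $g(tA+(1-t)B)>t\,g(A)+(1-t)\,g(B)$. Then $\delta_A,\delta_B$ lie in your set but $t\delta_A+(1-t)\delta_B$ does not. This is why your Step~2 could not close.

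The real constraint is the regular-point Jensen inequality, which involves non-homogeneous $f\in\predualAqc(\sou)$ \emph{and} the oscillation measure $\nu_x$. The missing idea is the Kristensen--\Raita decoupling (\Cref{res:KR}, where constant rank is spent). It eliminates $\nu_x$ and produces the correct set $\Env_\Asp$ of \Cref{def:micro}: $\mathrm{spt}\,\sigma\subset\sou_\Asp$ and $f(\var+\bar\sigma)\le f(\var)+\dpr{f^\infty,\sigma}$ for all $f$ and all $\var$. Singular-point profiles land in $\Env_\Asp$ because $\bar\sigma\in\Lambda_\Asp$, $f$ is convex along $\Lambda_\Asp$, and $f^\infty$ has a subgradient there (Kirchheim--Kristensen). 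This is Case~1 of \Cref{prop:micro_Young}, and with localization it identifies the set in the statement with $\Env_\Asp$.

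Step~2 does not repair this. Blowing up a Young measure equal to $\sigma_1$ on one region and $\sigma_2$ on another returns, at $\lambda$-a.e.\ point, the homogeneous profile $\nu^\infty_{x_0}\in\{\sigma_1,\sigma_2\}$, not a mixture. A genuine mixture needs fine-scale interlacing. That is only possible if the macroscopic barycenters of the two realizations agree (or differ by a suitable wave-cone vector); otherwise the glued barycenter is not $\Asp$-free. Your realizations $(\delta_0,\mathscr L^d\resmes\Omega,\sigma_i)$ have different barycenters $\bar\sigma_i$.

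Shifting the barycenter is again exactly what the decoupling provides. Evaluating \eqref{eq:micro} at $\var=-\bar\sigma_i$ shows that $(\delta_{-\bar\sigma_i},1,\sigma_i)$ satisfies \eqref{KR:hyp} with total barycenter $0$. At fixed barycenter \eqref{KR:hyp} is affine, so the mixed triple $(t\delta_{-\bar\sigma_1}+(1-t)\delta_{-\bar\sigma_2},1,t\sigma_1+(1-t)\sigma_2)$ satisfies it too. \Cref{res:KR} then puts $t\sigma_1+(1-t)\sigma_2$ in $\Env_\Asp$. This is the device used in the proof of \Cref{lem:intermediate}, and it would be a valid alternative.

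The paper's proof of \Cref{res:convexity} differentiates instead. Step~2 of \Cref{res:KR} gives $\dpr{\nabla_\Asp f(\varalt),\bar\sigma}\le\dpr{f^\infty,\sigma}$ for all $\varalt$ in a full-measure set depending only on $f$. These constraints are affine in $\sigma$, survive convex combinations, and integrate back along $\bar\sigma$ (Step~3). So your opening instinct, a set cut out by affine inequalities, is right. But the affine family is this one, not the $1$-homogeneous Jensen inequalities. Your closedness argument in Step~3 works essentially verbatim once applied to $\Env_\Asp$, adding that the support condition is closed.
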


This set is described explicitly in \Cref{def:micro}, where it is denoted $\Env_\Asp$; the identification and its proof are contained in \Cref{sec:YM}. A related convexity statement for the class of concentrations generated by sequences converging to zero in measure is contained in~\cite[Proposition~2.2]{gennaioliConcentrationPhenomenaVanishing2026}; we compare the two in \Cref{sec:sota}.
\begin{remark}[Global convexity]
	This unconditional convexity is somewhat surprising. In the classical theory, the convexity of $\Asp$-free (or gradient) Young measures typically holds only among measures that share a common macroscopic barycenter measure $\dpr{\id,\nu_x} \, dx + \dpr{\id,\nu^\infty_x} \lambda(dx)$. The fact that angular concentration profiles form a convex set---independent of their underlying macroscopic averages---highlights a rigid structural decoupling that occurs purely at the concentration level.
\end{remark}
The cornerstone of our approach is to study this convex set of concentration measures by duality with Morrey's concept of \emph{quasiconvexity}~\cite{Morrey_52}, generalized to the $\Asp$-free setting by Fonseca and M\"uller~\cite{fonsecaAQuasiconvexityLowerSemicontinuity1999}. This duality can be expressed through a well-known Jensen inequality connecting the macroscopic barycenter $\bar \nu^\infty$ to the concentration profile~\cite{arroyo-rabasaCharacterizationGeneralizedYoung2021,kristensenOscillationConcentrationSequences2022}: any such concentration measure $\nu^\infty$ necessarily satisfies (see \Cref{res:KR} and \Cref{sec:YM})
\begin{equation}\label{eq:nec}
	f(\bar \nu^{\infty}) \le f(0) + \dpr{f^\infty,\nu^\infty}
\end{equation}
for all $\Asp$-quasiconvex functions $f\in \Ebf(\sou)$.
The true power of~\eqref{eq:nec} emerges when paired with the geometric rigidity of positively $1$-homogeneous $\Asp$-quasiconvex functions (cf.~\cite{Sverak_quasinconvex_sublinear}). A key insight due to Kirchheim and Kristensen~\cite{KK1,kirchheimRankOneConvex2016} establishes that such functions are necessarily convex on points of the Tartar wave cone $\Lambda_{\Asp}$. We observe that this property actually \emph{characterizes} the wave cone (see \Cref{res:spreading}\ref{item:spreading:exif}): $\varalt \in \Lambda_{\Asp}$ if and only if
\begin{equation}\label{eq:sub}
	f(\var + \varalt) \le f(\var) + f^\infty(\varalt) \quad \text{for all } \var \in \sou,
\end{equation}
for all $\Asp$-quasiconvex integrands $f \in \Ebf(\sou)$. The sufficiency is \cite{kirchheimRankOneConvex2016} and gives the inequality for \emph{every} $\var$; the necessity is \Cref{res:spreading}\ref{item:spreading:exif} and only needs to violate it at $\var=0$.
\begin{remark}[Proof of the De Philippis--Rindler theorem]
	This perspective yields a direct and elementary proof of \Cref{thm:rank_one} and \Cref{thm:DPR} for constant-rank operators. Mimicking the $(L^1,L^{1,\infty})$-estimate constructions of M\"uller~\cite{mullerQuasiconvexFunctionsWhich1992}, one can tailor an $\Asp$-quasiconvex integrand that develops a strict concavity outside the wave cone. This geometric penalty rules out the possibility of a pure singular concentration on any ray $\ell_{\pol}$ with $\pol \notin \Lambda_\Asp$. Crucially, this requires only the classical definition of $\Asp$-quasiconvexity evaluated on periodic fields, bypassing the machinery of generalized Young measures entirely. We present this self-contained proof in \Cref{sec:dpr}.
\end{remark}
%%  * The spreading inequality  ----------------------------------------------
\subsubsection*{The spreading inequality}
Extending this rigidity to full conical neighborhoods requires a quantitative leap, which we achieve by refining the techniques of M\"uller~\cite{mullerQuasiconvexFunctionsWhich1992} and Zhang~\cite{zhang_constructions}.
For an elliptic subspace $V \subset \sou$---meaning $V \cap \Lambda_{\Asp} = \{0\}$---we measure the distance from the subspace to the wave cone on the unit sphere of $\sou$ as follows:
\[
d(V, \Lambda_{\Asp}) \coloneqq \inf_{\var \in V \cap \sphere_{\sou}} \, \dist(\var, \Lambda_{\Asp}) \in [0,1].
\]
We then define a Korn-type constant $\const{V}$ as
\begin{equation}
	\const{V} \coloneqq \sup \left\{ \frac{\|v\|_{L^{1,\infty}}}{\|\dist(v,V)\,\|_{L^1}} : v \in \regC^{\infty}_c(\R^d,\sou) \cap \ker \Asp,\ v \not\equiv 0 \right\}.
\end{equation}
The size of $\const{V}$ measures how badly the regularizing effect of $\Asp$ degenerates near $V$: it is the price at which an $\Asp$-free field is allowed to live close to $V$ in $L^1$ while still carrying weak-$L^1$ mass, so that a \emph{small} $\const{V}$ forbids $\Asp$-free functions from concentrating arbitrarily close to $V$. Crucially, in \Cref{res:korntype} we show that $\const{V}$ blows up with order at most $d(V,\Lambda_{\Asp})^{-(\lfloor d/2 \rfloor + 2)}$ as $V$ approaches the wave cone. Thus, by testing with compact perturbations of $\dist(\,\cdot\,,V)$, we are able to quantify a strict structural lower bound on the dispersion of mass at infinity:

\begin{thmx}[Spreading inequality]\label{thm:spreading}
	Let $\nu^\infty \in \prob(\sphere_{\sou})$ be the angular concentration measure of a homogeneous $\Asp$-free generalized Young measure $(\nu,\lambda,\nu^\infty) \in \mathbf Y_\Asp(\Omega)$ with $\lambda(\Omega) > 0$. If $V \subset \sou$ is a subspace with
	\[
	V \cap \Lambda_{\Asp} = \{0\},
	\]
	then
	\begin{equation}\label{intro:spreading:statement}
		|P_V \pol| - (1+\const{V}) |P_{V^\perp} \pol| \le \const{V} \int_{\sphere_{\sou}} \dist(\var, V) \, d\nu^\infty(\var)
	\end{equation}
	where $\pol = \bar\nu^\infty$ is the barycenter of $\nu^\infty$. If $\pol \in V$, then the estimate above reduces to
	\[
	|\pol| \le \const{V} \int_{\sphere_{\sou}} \dist(\var, V) \, d\nu^\infty(\var).
	\]
\end{thmx}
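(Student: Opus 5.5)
The plan is to test the Jensen inequality \eqref{eq:nec} against a single, tailored $\Asp$-quasiconvex integrand built from $\dist(\cdot,V)$ and the Korn-type constant $\const{V}$. Following M\"uller's $(L^1,L^{1,\infty})$ construction, I would consider for $v\in \regC^\infty_c(\R^d,\sou)\cap\ker\Asp$ the functional
\[
v\mapsto \const{V}\,\|\dist(v,V)\|_{L^1} - \|v\|_{L^{1,\infty}},
\]
which is nonnegative by definition of $\const{V}$. The aim is to extract a function $g:\sou\to\R$ whose quasiconvex envelope at $0$ is controlled. The natural candidate is
\[
f(\var) \coloneqq \const{V}\dist(\var,V) - |P_V \var| + (1+\const{V})|P_{V^\perp}\var|,
\]
or a suitable modification of it, chosen so that the left-hand side of \eqref{intro:spreading:statement} becomes $-f(\pol)$ up to the prefactor bookkeeping. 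The key claim I would prove is that $f$ is $\Asp$-quasiconvex, or at least dominates an $\Asp$-quasiconvex $\tilde f\in\Ebf(\sou)$ with $\tilde f(0)=0$, $\tilde f^\infty\le \const{V}\dist(\cdot,V)$ on $\sphere_\sou$, and $\tilde f(\pol)\ge |P_V\pol|-(1+\const{V})|P_{V^\perp}\pol|$.

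\textbf{Step 1 (the quasiconvex integrand).} Take $\tilde f$ to be the $\Asp$-quasiconvex envelope of $h(\var)=\const{V}\dist(\var,V)-\phi(\var)$, where $\phi$ is a concave-type penalty on $V$ chosen as in M\"uller's argument. Typically this is a truncated function like $\min\{|P_V\var|,1\}$ smoothed, exploiting that the layer-cake formula turns $\|v\|_{L^{1,\infty}}$ into a supremum over levels of $t\,|\{|v|>t\}|$. One shows $Q_\Asp h(\var_0)\ge |P_V\var_0| - (1+\const{V})|P_{V^\perp}\var_0|$ as follows. Write the quasiconvex envelope via periodic $\Asp$-free perturbations $\var_0+v$ with $v$ of zero mean (the constant-rank assumption guarantees this formula). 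Split $\var_0$ into its $V$ and $V^\perp$ parts, absorb the $V^\perp$ part using the Lipschitz bound of $\dist(\cdot,V)$ together with the triangle inequality (this produces the $(1+\const{V})|P_{V^\perp}\var_0|$ loss), and then apply the Korn-type inequality to the oscillation $v$. Since $P_V\var_0\in V$, the inequality $\const{V}\int\dist(\var_0+v,V)\ge \|v\|_{L^{1,\infty}}$ (after passing from compactly supported to periodic fields by a cutoff/localization argument) compared with $\int\phi(\var_0+v)$ yields the lower bound. Positive $1$-homogeneity of the principal parts gives $\tilde f^\infty\le h^\infty=\const{V}\dist(\cdot,V)-(\text{recession of }\phi)\le \const{V}\dist(\cdot,V)$, and $\tilde f(0)\le h(0)=0$.

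\textbf{Step 2 (apply Jensen).} For the homogeneous $\Asp$-free generalized Young measure, \eqref{eq:nec} (from \Cref{res:KR}) gives $\tilde f(\pol)\le \tilde f(0)+\dpr{\tilde f^\infty,\nu^\infty}\le \const{V}\int\dist(\var,V)\,d\nu^\infty$. Combining this with Step 1 gives \eqref{intro:spreading:statement}. The case $\pol\in V$ is immediate since $P_{V^\perp}\pol=0$. A scaling step may be needed: replace $\tilde f$ by $t^{-1}\tilde f(t\,\cdot)$ and let $t\to\infty$ or $t\to0$, which converts the truncated penalty into the homogeneous quantity $|P_V\pol|$ while preserving $\Asp$-quasiconvexity and the recession bound.

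\textbf{Main obstacle.} I expect Step 1 to be the hard part: showing that the envelope is finite and genuinely bounded below by $|P_V\var_0|$. This requires converting the weak-$L^1$ control of $v$ into a pointwise-in-average lower bound for $\int\phi(\var_0+v)$, where $\phi$ concave near $V$ penalizes spreading, and handling periodic versus compactly supported test fields. I would first try M\"uller's trick directly: use that $\phi(\var_0+v)\ge |P_V\var_0|\,\mathbf 1_{\{|v|\le |P_V\var_0|/2\}}$ minus corrections, bound the bad set $|\{|v|>s\}|\le\|v\|_{L^{1,\infty}}/s$ by Korn, and optimize in $s$. The constant $1+\const{V}$ should emerge naturally from this optimization.
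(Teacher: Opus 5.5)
Your overall mechanism (a M\"uller-type $\Asp$-quasiconvex envelope, the Korn inequality transferred to the torus, Chebyshev on the bad set, then Jensen) is the paper's. However, Step~1 as written puts the perturbation in with the wrong sign, and its key claim is false.

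You take $h=\const{V}\dist(\cdot,V)-\phi$ with $\phi\ge0$, $\phi(0)=0$ (e.g.\ a smoothed $\min\{|P_V\var|,1\}$), and you assert $\QC h(\pol)\ge|P_V\pol|-(1+\const{V})|P_{V^\perp}\pol|$. But $u\equiv0$ is an admissible competitor, so $\QC h(\pol)\le h(\pol)=\const{V}|P_{V^\perp}\pol|-\phi(\pol)$. For $\pol\in V\mysm\{0\}$ this gives $\QC h(\pol)\le0<|\pol|$. Subtracting a nonnegative penalty can only lower the envelope, and no Korn estimate can undo that.

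The ``natural candidate'' $(1+2\const{V})|P_{V^\perp}\var|-|P_V\var|$ fails for a related reason. Jensen only bounds $f(\pol)$ from \emph{above} by $f(0)+\dpr{f^\infty,\nu^\infty}$, so the left-hand side of \eqref{intro:spreading:statement} has to appear as a \emph{lower} bound for $f(\pol)$, not as $-f(\pol)$. Moreover, this $1$-homogeneous function would be $\Asp$-quasiconvex only if $\|P_Vu\|_{L^1}\le(1+2\const{V})\|P_{V^\perp}u\|_{L^1}$ held for zero-mean periodic $\Asp$-free $u$. That is the kind of strong $L^1$ estimate that fails in general, which is why one works with $L^{1,\infty}$. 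Note also that in your ``main obstacle'' paragraph you bound $\phi(\pol+v)$ from \emph{below} on $\{|v|\le s\}$; that is the computation for the opposite sign.

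The missing idea is to \emph{add} a nonnegative bounded bump whose recession function vanishes. The paper (\Cref{res:spreading}\ref{item:spreading:ineq}) uses $g_\alpha=\const{V}|P_{V^\perp}\var|+\frac{\alpha}{1-\alpha}(|P_V\pol|-|\var-P_V\pol|)_+$, and the argument runs as follows.
\begin{itemize}
\item Since $g_\alpha\ge0=g_\alpha(0)$, one gets $\QC g_\alpha(0)=0$.
\item The sandwich $\const{V}|P_{V^\perp}\cdot|\le\QC g_\alpha\le\const{V}|P_{V^\perp}\cdot|+\frac{\alpha}{1-\alpha}|P_V\pol|$ gives $\QC g_\alpha\in\predualAqc(\sou)$ with recession function exactly $\const{V}|P_{V^\perp}\cdot|$.
\item For zero-mean periodic $\Asp$-free $u$, the Korn term contributes at least $\|u\|_{L^{1,\infty}(\mathbb T^d)}-\const{V}|P_{V^\perp}\pol|$ by \Cref{res:transfer}.
\item Chebyshev at level $\alpha(|P_V\pol|-|P_{V^\perp}\pol|)$ shows the bump still contributes at least $\alpha(|P_V\pol|-|P_{V^\perp}\pol|)-\|u\|_{L^{1,\infty}(\mathbb T^d)}$.
\item The weak-$L^1$ terms cancel, and letting $\alpha\to1$ gives the coefficient $1$.
\end{itemize}

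The amplitude $\frac{\alpha}{1-\alpha}\to\infty$ is essential. With a unit-slope penalty and your threshold $|P_V\pol|/2$, the balance only yields $\frac12|P_V\pol|$. The rescaling $t^{-1}\tilde f(t\,\cdot)$ does not help: for fixed $t$ it only moves the truncation level, $t\to\infty$ returns the recession function, and $t\to0$ puts $|P_V\cdot|$ into the recession function. Your truncated penalty would also work once corrected: take $c\min\{|P_V\var|,R\}$ with the sign reversed, $R\ge|P_V\pol|$, amplitude $c\to\infty$, and Chebyshev level $s=\frac{c}{1+c}|P_V\pol|$.

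Finally, \Cref{res:KR} requires a point with $\lambda^a(x)>0$. When $\lambda$ is purely singular, Step~2 needs the paper's separate argument: \Cref{thm:DPR} applied to the barycenter measure gives $\bar\nu^\infty\in\Lambda_\Asp$, and one then uses $\Lambda_\Asp$-convexity and the Kirchheim--Kristensen subdifferential. Equivalently, you can invoke \Cref{prop:micro_Young}.
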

This spreading inequality imposes a stronger constraint than the De Philippis--Rindler theorem. It dictates that the mass of a concentration measure $\nu^\infty$ cannot cluster arbitrarily close to its own projection onto an elliptic plane. As a consequence, since a probability measure on the unit sphere has variance $\Var(\nu^\infty) = \int_{\sphere_{\sou}} |\var - \pol|^2 \, d\nu^\infty(\var) = 1 - |\pol|^2$, we obtain the barycenter bound
\begin{align*}
	|\pol|^2 \leqslant \frac{1}{1 + (\const{\lin{\pol}})^{-2}} \ <\ 1.
\end{align*}
Combining it with the obtained Korn-type constant (\Cref{res:korntype}), we discover that an angular concentration whose barycenter avoids the wave cone must spread on the sphere at a rate controlled by the distance to $\Lambda_\Asp$ (see \Cref{rem:boundPol} and \Cref{rem:variance}):
\begin{align}\label{eq:variance}
	\Var(\nu^\infty) \ \geqslant\ \frac{d(\lin{\pol},\Lambda_\Asp)^{2(\lfloor d/2\rfloor + 2)}}{1 + \ckorn^2} .
\end{align}

\begin{remark}[Rigidity of 1-dimensional concentrations]
	As an immediate consequence of \Cref{thm:spreading}, if the concentration profile $\nu^\infty$ of an $\Asp$-free Young measure is supported entirely on a line $V = \lin{\pol}$ meeting the wave cone only at the origin, the distance integral vanishes, forcing its barycenter to be identically zero. Consequently, the \emph{only} such concentration is the sum of equally weighted Diracs at the intersections of $V$ with the unit sphere: $\nu^\infty = \frac{1}{2}(\delta_{{\pol}/{|\pol|}} + \delta_{-{\pol}/{|\pol|}})$.
\end{remark}
%%  * Quantifying the aperture of cones  -------------------------------------
\subsubsection*{Quantifying the aperture of cones}
With the spreading inequality in hand, we can now evaluate the rigidity of specific conical geometries. Let $V \subset \sou$ be an \emph{elliptic subspace}. Given aperture angles $0 \le \alpha, \beta < \pi/2$, we consider the class $\mathcal C_{\alpha,\beta}(V)$ of all \emph{closed, convex} and \emph{acute} cones $\mathcal K \subset \sou$ satisfying the \emph{ellipticity condition}
\begin{equation}\label{eq:cone_ell}
	\mathcal K \cap \Lambda_{\Asp} \subset \{0\}
\end{equation}
and the aperture constraints:
\begin{align}\label{def:cone_constraints}
	\left\{\begin{aligned}
		|P_V X| &\ge \cos(\alpha)|X| \\
		\dpr{X,Z} &\ge \cos(\beta) |X|
	\end{aligned}
	\right. \quad \qquad \text{for all $X \in \mathcal K$, \quad for some $Z \in V \cap S_{\sou}$}.
\end{align}
~\vspace{1.5em}
\begin{wrapfigure}{r}{0.45\textwidth}
	\centering
	\vspace{-3.6em}
	\begin{tikzpicture}[
		x={(-0.6cm,-0.4cm)}, y={(0.8cm,-0.2cm)}, z={(0cm,1cm)},
		scale=0.56,
		>=stealth
		]
		\def\Ry{2.5}
		\def\Rz{1.5}
		\def\L{4}
		\def\Wz{2.5} \def\Wr{1.2}
		\def\angContour{108.9}
		\draw[red!70!black, densely dotted]
		plot[domain=0:360, samples=40] ({\Wr*cos(\x)}, {\Wr*sin(\x)}, -\Wz);
		\foreach \ang in {0,22.5,...,337.5} {
			\draw[red!70!black, densely dotted] (0,0,0) -- ({\Wr*cos(\ang)}, {\Wr*sin(\ang)}, -\Wz);
		}
		\fill[white] (0,0,0) -- plot[domain=180:352.6, samples=40] (\L, {\Ry*cos(\x)}, {\Rz*sin(\x)}) -- cycle;
		\fill[white] plot[domain=180:360, samples=40] (\L, {\Ry*cos(\x)}, {\Rz*sin(\x)}) -- cycle;
		\fill[green!80!black, fill opacity=0.3]
		plot[domain=180:360, samples=40] (\L, {\Ry*cos(\x)}, {\Rz*sin(\x)}) -- cycle;
		\fill[green!80!black, opacity=0.25]
		(0,0,0) -- plot[domain=180:352.6, samples=40] (\L, {\Ry*cos(\x)}, {\Rz*sin(\x)}) -- cycle;
		\draw[green!60!black] plot[domain=180:360, samples=40] (\L, {\Ry*cos(\x)}, {\Rz*sin(\x)}) -- cycle;
		\filldraw[cyan!15, draw=blue!40, opacity=1]
		(-1.2, -3.6, 0) -- (5.2, -3.6, 0) -- (5.2, 3.6, 0) -- (1.5, 3.6, 0) -- (-1.2, 1.0, 0) -- cycle;
		\node[blue!80!black] at (5, -2.8, 0.2) {$V$};
		\draw[dashed, blue!80!black] (0,0,0) -- (\L, 0, 0) node[anchor= north west]{$\dircone$};
		\fill[blue!80!black] (\L, 0, 0) circle (1.6pt);
		\draw[green!60!black] (0,0,0) -- (\L, \Ry, 0);
		\draw[green!60!black] (0,0,0) -- (\L, -\Ry, 0);
		\draw[->, red] plot[domain=0:32, samples=20] ({2*cos(\x)}, {2*sin(\x)}, 0);
		\node[red, anchor=south west] at (2.2, 0.8, 0) {$\beta$};
		\draw[red!70!black, densely dotted]
		plot[domain=0:360, samples=40] ({\Wr*cos(\x)}, {\Wr*sin(\x)}, \Wz);
		\foreach \ang in {0,22.5,...,337.5} {
			\draw[red!70!black, densely dotted] (0,0,0) -- ({\Wr*cos(\ang)}, {\Wr*sin(\ang)}, \Wz);
		}
		\node[red!80!black, anchor=south west] at (1, -2, \Wz-0.4) {$\Lambda_{\Asp}$};
		\fill[green!80!black, opacity=0.3]
		plot[domain=0:180, samples=40] (\L, {\Ry*cos(\x)}, {\Rz*sin(\x)}) -- cycle;
		\fill[green!80!black, opacity=0.25]
		(0,0,0) -- plot[domain=0:108.9, samples=40] (\L, {\Ry*cos(\x)}, {\Rz*sin(\x)}) -- cycle;
		\draw[green!60!black] plot[domain=0:180, samples=40] (\L, {\Ry*cos(\x)}, {\Rz*sin(\x)}) -- cycle;
		\draw[green!60!black] (0,0,0) -- (\L, 0, \Rz);
		\draw[green!60!black] (0,0,0) -- (\L, {\Ry*cos(\angContour)}, {\Rz*sin(\angContour)});
		\draw[->, orange!90!black] plot[domain=0:20.5, samples=20] ({2.5*cos(\x)}, 0, {2.5*sin(\x)});
		\node[orange!90!black, anchor=south east] at (2.4, 0, 0.5) {$\alpha$};
		\node[green!40!black, fill=white, fill opacity=0.7, text opacity=1, inner sep=1pt, rounded corners]
		at (3, -3.4, -2.4) {$\mathcal K$};
		\fill[black] (0,0,0) circle (1.2pt);
	\end{tikzpicture}
	\caption{An admissible cone $\mathcal K$ satisfying~\eqref{eq:cone_ell}--\eqref{def:cone_constraints}.}\label{fig:cone}
\end{wrapfigure}
Here $P_V$ denotes the orthogonal projection onto $V$. Geometrically, \eqref{def:cone_constraints} describes a cone (see \Cref{fig:cone}) formed by bounding the out-of-plane deviation from $V$ by the angle $\alpha$, intersected with the $\beta$-cone opening radially around the ray $\ell_Z^+$. Restricting $\beta$ to be strictly less than $\pi/2$ already ensures the cone is acute. If $V = \ell_Z$, setting $\alpha = \beta$ recovers the standard radially symmetric cone of aperture $\alpha$ around $\ell_Z^+$.
We are now in place to state our main compensated compactness result, which follows directly from evaluating the spreading inequality against these geometric constraints.
\begin{thmx}[Compensated compactness on quantified cones]\label{thm:main}
	Let $\Asp$ be an operator as in~\eqref{eq:A} satisfying the constant rank property. There exists a positive constant $\ckorn = \ckorn(\Asp,d)$ with the following property: if $V \subset \sou$ is a subspace such that
	\[
	V \cap \Lambda_{\Asp} = \{0\},
	\]
	and $0 \le \alpha,\beta < \pi/2$ are angles satisfying 	\begin{align}\label{main:hyp}
		0 \leqslant \alpha < \frac{\cos(\beta)}{1+2\ckorn} \cdot d(V, \Lambda_{\Asp})^{\lfloor d/2 \rfloor + 2},
	\end{align}
	then for any $\mathcal K \in \mathcal C_{\alpha,\beta}(V)$ and any uniformly bounded sequence $(\mu_j)_j \subset \Meas(\Omega;\sou)$ satisfying
	\begin{equation*}
		\begin{cases}
			\begin{aligned}
				\Asp \mu_j &= 0 \quad \text{in the sense of distributions,}\\
				\mu_j &\toweakstar \mu \quad \text{in the sense of measures,}\\
				\quad \int_{\Omega} \dist\Big(\tfrac{d\mu_j}{d|\mu_j|},\mathcal K\Big) \, d|\mu_j| &\longrightarrow 0 ,
			\end{aligned}
		\end{cases}
	\end{equation*}
there exists a map $v \in L^1(\Omega,\sou)$ such that $\mu = v \, \mathscr L^d$ and
	\begin{align*}
		|\mu_j|^s \to 0 \; \; \text{strongly in $\Meas(\Omega)$} \qquad \text{and} \qquad
		\frac{d\mu_j}{d\mathscr L^d} \rightharpoonup v \; \; \text{weakly in } L^1_{{\mathrm{loc}}}(\Omega;\sou).
	\end{align*}
\end{thmx}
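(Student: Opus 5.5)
We argue by contradiction at the level of generalized Young measures. Since $(\mu_j)_j$ is uniformly bounded, up to a subsequence it generates an $\Asp$-free generalized Young measure $(\nu,\lambda,\nu^\infty)\in\mathbf Y_\Asp(\Omega)$. For measures rather than $L^1$ fields we first replace $\mu_j$ by mollified $\Asp$-free fields $v_j=\mu_j*\rho_{\eps_j}$ with $\eps_j\to0$ fast. Convexity of $\dist(\cdot,\mathcal K)$ gives $\int\dist(v_j,\mathcal K)\,dx\le\int\dist(\tfrac{d\mu_j}{d|\mu_j|},\mathcal K)\,d|\mu_j|$ (by Jensen on the positively $1$-homogeneous convex integrand, up to boundary effects handled by localization). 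The conclusion is equivalent to $\lambda=0$: $\lambda=0$ forces equi-integrability of $(v_j)$ on compacts, hence weak $L^1_{\mathrm{loc}}$ convergence and $\mu\ll\mathscr L^d$, and $|\mu_j|^s\to0$ because mass concentrating on $|\mu_j|^s$ would be seen by $\lambda$. So assume $\lambda\neq0$.

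The constraint passes to the concentration part. The integrand $\dist(\cdot,\mathcal K)$ is convex, Lipschitz and positively $1$-homogeneous (since $\mathcal K$ is a closed convex cone), so it belongs to $\Ebf(\sou)$ with recession function equal to itself. The representation of limits then gives
\[
\int\dist(\cdot,\mathcal K)\,d\nu_x\,dx+\int\dpr{\dist(\cdot,\mathcal K),\nu^\infty_x}\,d\lambda=0 .
\]
Hence $\operatorname{supp}\nu^\infty_x\subset\mathcal K\cap\sphere_\sou$ for $\lambda$-a.e.\ $x$. We then blow up at a $\lambda$-typical point (a $\lambda^s$ point, or a point with $\lambda\ll\mathscr L^d$ and positive density) to obtain a homogeneous $\Asp$-free Young measure whose angular measure is $\nu^\infty_{x_0}$, supported in $\mathcal K\cap\sphere_\sou$. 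The admissibility of this blow-up is given by \Cref{prop:convexity}, via the set $\Env_\Asp$.

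The core step is the contradiction with \Cref{thm:spreading}. Let $\pol=\bar\nu^\infty$. Every $X\in\mathcal K\cap\sphere_\sou$ satisfies $|P_V X|\ge\cos\alpha$, so $\dist(X,V)=|P_{V^\perp}X|=\sqrt{1-|P_VX|^2}\le\sin\alpha\le\alpha$. It also satisfies $\dpr{X,Z}\ge\cos\beta$, which integrates to $|P_V\pol|\ge\dpr{\pol,Z}\ge\cos\beta$, using $Z\in V$. By convexity, $|P_{V^\perp}\pol|\le\int|P_{V^\perp}X|\,d\nu^\infty\le\alpha$. The spreading inequality then yields
\[
\cos\beta-(1+\const V)\alpha\le\const V\,\alpha,\qquad\text{that is,}\qquad \cos\beta\le(1+2\const V)\,\alpha .
\]
\Cref{res:korntype} gives $\const V\le\ckorn\, d(V,\Lambda_\Asp)^{-(\lfloor d/2\rfloor+2)}$. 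Since $d(V,\Lambda_\Asp)\le1$, we get $1+2\const V\le(1+2\ckorn)\,d^{-(\lfloor d/2\rfloor+2)}$. This contradicts \eqref{main:hyp}, so $\lambda=0$.

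The algebra in this last step is routine. The expected obstacle is the passage from $\mu_j$ to a homogeneous Young measure that still satisfies the support condition and lies in the class covered by \Cref{thm:spreading}. This requires the localization/blow-up (tangent Young measure) principle for $\mathbf Y_\Asp$ under constant rank, together with the identification with $\Env_\Asp$. I would rely on \Cref{sec:YM}, with \Cref{prop:convexity} ensuring that pointwise angular measures are admissible.
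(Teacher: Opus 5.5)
Your argument for the interior is sound. It does not, however, prove the global claim $|\mu_j|^s\to0$ in $\Meas(\Omega)$.

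\emph{The gap.} The concentration measure $\lambda$ of the Young measure generated by $(\mu_j)_j$ lives on $\overline\Omega$, but the blow-up and \Cref{prop:convexity} only act at points of $\Omega$. So your contradiction gives $\lambda\resmes\Omega=0$. That yields $|\mu_j|^s(\bdsubset)\to0$ and equi-integrability of $\mu_j^a$ on each compact $\bdsubset\subset\Omega$, and nothing more. Nothing in your argument excludes $\lambda(\partial\Omega)>0$; indeed the statement itself only claims $L^1_{\mathrm{loc}}$ convergence of the absolutely continuous parts. At boundary points you also have no way to separate singular from absolutely continuous contributions. This is exactly where the sentence ``mass concentrating on $|\mu_j|^s$ would be seen by $\lambda$'' breaks down.

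\emph{The fix.} The missing idea is the paper's Step~1, which applies \Cref{thm:DPR} to each $\mu_j$ separately. The polar of $\mu_j$ lies in $\Lambda_\Asp$ at $|\mu_j|^s$-a.e.\ point. If $\Lambda_\Asp=\{0\}$ the singular parts vanish outright. Otherwise, since $\mathcal K$ and $\Lambda_\Asp$ are closed cones meeting only at the origin,
\[
\delta\coloneqq\min\{\dist(\varalt,\mathcal K):\varalt\in\Lambda_\Asp,\ |\varalt|=1\}>0,
\]
and therefore
\[
\delta\,|\mu_j|^s(\Omega)\le\int_\Omega\dist\Big(\tfrac{d\mu_j}{d|\mu_j|},\mathcal K\Big)\,d|\mu_j|\longrightarrow0 .
\]
This bound is global and uses neither the aperture condition nor Young measures.

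\emph{Comparison of the interior arguments.} Your route differs from the paper's and is legitimate. You apply the spreading inequality pointwise to $\nu^\infty_{x_0}$ at a $\lambda$-typical interior point. The load-bearing ingredient is then \Cref{prop:convexity}, and through it the localization principle for $\mathbf Y_\Asp$ that its proof imports.

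The paper's \Cref{lem:intermediate} avoids tangent Young measures. It proceeds in three moves:
\begin{enumerate}
\item It averages, setting $\mu^\infty=\lambda(\bdsubset)^{-1}\int_\bdsubset\nu^\infty_x\,d\lambda$.
\item It checks the Jensen hypothesis for $\mu^\infty$ by evaluating the decoupled inequalities at $\pol-\bar\nu^\infty_x$. At $\lambda^a$-points this uses \Cref{res:KR}; at $\lambda^s$-points it uses \Cref{thm:DPR} together with Kirchheim--Kristensen.
\item It applies \Cref{res:KR} a second time, to the centred triple.
\end{enumerate}
The final trigonometric contradiction is the same as yours.

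\emph{One caution.} Take the Young measure generated by the $\mu_j$ themselves, as \Cref{def:A_free_GYM} allows. Then read off the local conclusions by testing with $\varphi(x)|\var|$ and $\varphi(x)\min\{|\var|,M\}$ for $\varphi\in\regC_c(\Omega)$. Equi-integrability of the mollified $v_j$ does not by itself control $|\mu_j|$, because mollification can cancel mass. This is why the paper, which works with mollified fields, needs the choice of $\eps_j$ adapted to cut-offs in \Cref{res:main:measure}.
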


\begin{remark}[Operators of first order]\label{rem:GR}
	When $\Asp$ has order one, the aperture condition~\eqref{main:hyp} can be dispensed with altogether. After the first version of this work appeared, Gennaioli and Rindler~\cite[Theorem~1.8]{gennaioliConcentrationPhenomenaVanishing2026} proved that for a first-order constant-rank operator the conclusion of \Cref{thm:main} holds for \emph{every} closed convex cone $\mathcal K$ with $\mathcal K \cap \Lambda_{\Asp} = \{0\}$ and $\mathcal K \cap (-\mathcal K) = \{0\}$, with no restriction whatsoever on the aperture. Their argument proceeds by a smoothing along the heat flow which, as they observe, is tied to the first order; for operators of order $k \ge 2$ the condition~\eqref{main:hyp} remains, to our knowledge, the only available criterion. We discuss this in \Cref{sec:sota}.
\end{remark}
\begin{remark}[Symmetric cones and one-dimensional subspaces]
	When the target cone is aperture-symmetric, meaning $\alpha = \beta$, condition~\eqref{main:hyp} becomes implicit in $\alpha$; it may then be replaced by the explicit sufficient condition
	\[
	0 \le \alpha < \frac{\pi}{4(1+2\ckorn)} \, d(V, \Lambda_{\Asp})^{\lfloor d/2 \rfloor + 2},
	\]
	proved in \Cref{rem:equal}.
	This is particularly relevant when the elliptic subspace $V$ is one-dimensional (i.e.\ $V = \lin{\dircone}$): in that regime, setting $\alpha = \beta$ means that the pure distance aperture bound applies to the radially symmetric cone $\{\var \in \sou : \dpr{\var,\dircone} \ge \cos(\alpha)|\var| \}$.
\end{remark}

\begin{remark}
If the weak-$*$ convergence $\mu_j \toweakstar \mu$ is not assumed, the remaining hypotheses still give that the sequence of maps $(d\mu_j/d\mathscr L^d)_j$ is equi-integrable on the compact subsets of $\Omega$, while the singular parts converge to zero in total variation; this is \Cref{res:main:measure} below, together with Step~1 in the proof of \Cref{thm:main}, which is global and uses neither localization nor the weak-$*$ convergence. Here equi-integrability is understood as in \cite[Definition~1.26]{ambrosioFunctionsBoundedVariation2000}.
\end{remark}

\Cref{thm:main} is optimal in the following two respects; both statements are about sequences bounded in $L^1$, which is the regime the theorem addresses.
\begin{itemize}[leftmargin=*,itemsep=0.1cm]
	\item \textit{No strong $L^1$-compactness, and no higher integrability.} Because, in general, a cone $\mathcal K \in \mathcal C_{\alpha,\beta}(V)$ has a nonempty interior, strong $L^1$-compactness fails due to the persistence of purely oscillatory bounded laminates: if $\ball_r(\dircone) \subset \mathcal K$ and $\zeta \in \Lambda_\Asp \mysm \{0\}$, then $\dircone \pm \frac r2 \frac{\zeta}{|\zeta|}$ both lie in $\mathcal K$ and differ by an element of the wave cone, so that $\mathcal K$ always supports a non-trivial $\Asp$-free laminate. Moreover, no higher integrability may be added to the conclusion: see \Cref{rem:no_higher_int}.

	\item \textit{Acuteness cannot be dropped.} The aperture condition is sharp in $\beta$: if $\beta = \pi/2$ then $\mathcal K$ may contain a line through the origin, and there exist gradient sequences that fail to be locally equi-integrable on any open subset (see also \cite{arroyo-rabasaCharacterizationGeneralizedYoung2021,kristensenCharacterizationGeneralizedGradient2010} and the explicit example in \cite[Section~7]{arroyo-rabasaHigherIntegrabilityMeasures2024}). This is consistent with \Cref{thm:spreading}: for a symmetric concentration such as $\frac12(\delta_{\pol} + \delta_{-\pol})$ the barycenter vanishes and the spreading inequality is empty. The angle $\alpha$, by contrast, enters only through the threshold~\eqref{main:hyp}: acuteness is already secured by $\beta < \pi/2$, and \Cref{rem:GR} shows that for first-order operators no bound on $\alpha$ is needed at all.

\end{itemize}

\begin{example}[Gradients]\label{ex:curl}
	Let $\Asp = \operatorname{curl}$ act on $\R^{m \times d}$, so that $\Lambda_{\Asp} = \{a \otimes b : a \in \R^m,\ b \in \R^d\}$ is the cone of rank-one matrices, and equip $\R^{m\times d}$ with the Frobenius inner product. The distance of a matrix $\var$ to the rank-one cone is $(\sum_{i \ge 2} \sigma_i(\var)^2)^{1/2}$, where $\sigma_1(\var) \ge \sigma_2(\var) \ge \cdots$ are the singular values of $\var$. Hence, for a line $V = \lin{\var}$ with $|\var| = 1$,
	\[
	d(\lin{\var},\Lambda_{\Asp}) = \sqrt{1 - \sigma_1(\var)^2},
	\]
	which is positive precisely when $\rank \var \ge 2$. More generally, for a subspace $V$,
	\[
	d(V,\Lambda_{\Asp}) = \Big(1 - \sup\{\sigma_1(\var)^2 : \var \in V, \ |\var| = 1\}\Big)^{1/2}.
	\]
	Take $d = m = 2$ and let $V = \myspan\{I,J\}$ be the plane of conformal matrices, where $I$ is the identity and $J = \left(\begin{smallmatrix} 0 & -1\\ 1& 0\end{smallmatrix}\right)$. Every $aI + bJ$ is a multiple of a rotation, so both its singular values equal $\sqrt{a^2+b^2}$; in particular every nonzero element has rank two, $V$ is elliptic, and $\sigma_1(\var) = 1/\sqrt 2$ whenever $|\var| = 1$. Therefore
	\[
	d(V,\Lambda_{\Asp}) = \tfrac{1}{\sqrt 2},
	\]
	and since $\lfloor d/2 \rfloor + 2 = 3$, \Cref{thm:main} applies to every $\mathcal K \in \mathcal C_{\alpha,\beta}(V)$ with $\alpha < \cos(\beta)\,2^{-3/2}/(1+2\ckorn)$; in the symmetric case $\alpha = \beta$ this reads $\alpha < \pi \, 2^{-3/2}/(4(1+2\ckorn))$. Thus curl-free measures whose polar densities asymptotically concentrate in a sufficiently thin cone about the conformal plane cannot develop singular parts.
\end{example}

%%  1.2  State of the art  ----------------------------------------------------
\subsection{State of the art}\label{sec:sota}
A purely qualitative version of \Cref{thm:main} follows from the De Philippis--Rindler theorem by a contradiction argument along targets collapsing onto a ray outside the wave cone; being non-constructive, it leaves the threshold aperture unquantifiable. Supplying the explicit power-law dependence on $d(V,\Lambda_\Asp)$ is the role of \Cref{thm:spreading}.

As recorded in \Cref{rem:GR}, no threshold at all is needed in the first-order case: Gennaioli and Rindler~\cite[Theorem~1.8]{gennaioliConcentrationPhenomenaVanishing2026} obtain the conclusion of \Cref{thm:main} for every first-order constant-rank operator and every closed convex cone containing no line. Their route is entirely different from ours---a Choquet-type decomposition of concentration Young measures, with a heat-flow smoothing in place of the classical Fourier constructions---and yields much besides, notably a resolution of Bouchitt\'e's vanishing mass conjecture~\cite{bouchitteOptimizationLightStructures2003,bouchitteOptimizationLightStructures2020} in that setting.

We also record that~\cite[Proposition~2.2]{gennaioliConcentrationPhenomenaVanishing2026} contains a convexity statement in the spirit of \Cref{prop:convexity}. The two are logically independent: \Cref{prop:convexity} covers the concentration profiles of \emph{all} $\Asp$-free generalized Young measures, for constant-rank operators of every order, and asserts weak-$*$ compactness as well---which the authors observe they are not able to reach---whereas theirs addresses those generated by sequences converging to zero \emph{in measure}, but dispenses with the constant-rank hypothesis in the first-order case. Bridging the two classes is precisely where the constant rank property is spent on our side, through the decoupling of oscillation from concentration of \Cref{res:KR}.

In 2018, Filip Rindler (personal communication~\cite{Rindler2018}) posed the following conjectural generalization, part of a broader inquiry into the fine structure of PDE-constrained measures:
\begin{problem}[Quantitative rigidity, Rindler \cite{Rindler2018}]\label{prob:rindler}
	Does the distance of a polar density $\frac{d\mu}{d|\mu|}$ to the wave cone $\Lambda_{\Asp}$ control---perhaps in a nonlinear way---how ``close'' $\mu$ is to being singular?
\end{problem}
He also proposed a weaker and more concrete formulation, for a bounded sequence $(v_j)_j \subset L^1(\Omega;\sou)$ of smooth fields with $\Asp v_j = 0$:
\begin{equation}\label{eq:rindler_weak}
	\text{if } \ \dist(v_j,\Lambda_\Asp) \ \ge\ \delta \, |v_j| \ \text{ a.e., for some } \delta > 0, \ \text{ must } (v_j)_j \text{ be equi-integrable?}
\end{equation}
The target it singles out, $\compSys_\delta \coloneqq \{ \var \in \sou \st \dist(\var,\Lambda_\Asp) \ge \delta|\var| \}$, meets $\Lambda_\Asp$ only at the origin but is \emph{symmetric}, hence neither convex nor acute: in the language of \eqref{def:cone_constraints} it corresponds to $\beta = \pi/2$, the borderline excluded by our second sharpness comment, and it equally violates the one-sidedness required in~\cite[Theorem~1.8]{gennaioliConcentrationPhenomenaVanishing2026}. Thus \eqref{eq:rindler_weak} appears to remain open. Both results answer it affirmatively along any single convex acute branch of $\compSys_\delta$; neither excludes that two antipodal branches cooperate to build a concentration whose barycenter vanishes, and against that possibility \Cref{thm:spreading} provides some quantitative information, albeit implicitly.

\Cref{rem:no_higher_int} bears on the formulation of \Cref{prob:rindler} itself. In the $L^1$-approximate regime, ``how close $\mu$ is to being singular'' cannot be measured on the Lebesgue scale: the crests constructed there stay within an arbitrarily small $L^1$-distance of a fixed elliptic direction while blowing up in every $L^p$ with $p>1$, and even in $L\log L$. Any quantitative form of \Cref{prob:rindler} at $p=1$ must therefore be phrased in terms of equi-integrability, or of the size of the singular part, rather than of integrability gains---in contrast with the range $p>1$, where such gains are exactly what one obtains~\cite{bateAlbertiRepresentationsRectifiability2025,arroyo-rabasaHigherIntegrabilityMeasures2024}.

It is useful, finally, to distinguish the \emph{exact} and the \emph{approximate} constraints. Under an exact ($L^\infty$) constraint into a convex target $\compSys$ with $\compSys \cap \Lambda_\Asp \subset \{0\}$, the absence of rank-one connections heavily restricts oscillation, a phenomenon studied at length for material microstructures~\cite{ballFinePhaseMixtures1987,tartarRemarksSeparatelyConvex1993,bhattacharyaRestrictionsMicrostructure1994,zhangMultiwellProblemsRestrictions1994,mullerVariationalModelsMicrostructure1999}. Approximate ($L^p$, $p>1$) constraints let the target be approached only asymptotically, and have produced quantitative absolute continuity and higher integrability results for divergence-free symmetric positive tensors~\cite{serreDivergencefreePositiveSymmetric2018,bateQuantitativeAbsoluteContinuity2020}, determinants~\cite{mullerSurprisingHigherIntegrability1989, Coifmanetal1993}, and other anisotropic systems~\cite{arroyo-rabasaHigherIntegrabilityMeasures2024, guerraCompensationPhenomenaConcentration2024, dephilippisMichaelSimonInequalityAnisotropic2026}; Bate and Weigt~\cite{bateAlbertiRepresentationsRectifiability2025} cover the full range of $L^p$ while keeping the strict target $\compSys = \slin{\dircone}$, quantifying how far an $\Asp$-free function is from $L^p$ by its $L^1$ distance to the ray. The regime addressed here, $p=1$, is the critical threshold.

%%  Structure of the paper  --------------------------------------------------
\subsection*{Structure of the paper}
\Cref{sec:prel} fixes notation and collects the ingredients we need: the compensated Korn inequality with its quantitative degeneracy (\Cref{res:korntype}) and its periodic counterpart (\Cref{res:transfer}); the functional-analytic setting of integrands with linear growth and $\Asp$-quasiconvexity, together with the Kristensen--\Raita decoupling (\Cref{res:KR}) and the convexity statement (\Cref{res:convexity}); and the theory of $\Asp$-free generalized Young measures (\Cref{sec:YM}), where \Cref{prop:convexity} is proved. \Cref{sec:constructions} contains the heart of the paper: the construction of $\Asp$-quasiconvex integrands with a tailored concave perturbation, and the resulting spreading inequality (\Cref{res:spreading}). \Cref{sec:dpr} uses that construction to give a short, Young-measure-free proof of \Cref{thm:DPR} for constant-rank operators. \Cref{sec:proofs} proves \Cref{thm:spreading} and \Cref{thm:main}. Two appendices collect the derivative bounds for Moore--Penrose pseudo-inverses and the transfer of the Korn inequality to the torus.
%%%%%%%%%%%%%%%%%%%%%%%%%%%%%%%%%%%%%%%%%%%%%%%%%%%%%%%%%%%%%%%%%%%%%%%%%%%%%%
%%  2.  PRELIMINARIES
%%%%%%%%%%%%%%%%%%%%%%%%%%%%%%%%%%%%%%%%%%%%%%%%%%%%%%%%%%%%%%%%%%%%%%%%%%%%%%
\section{Preliminaries}\label{sec:prel}
In all that follows, $\Omega \subset \R^d$ will denote an open Lipschitz domain. The open ball of center $a$ and radius $r > 0$ is denoted $\ball_r(a)$ or simply $\ball_r$ when $a = 0$. If $E$ is a Banach space, we denote by $\left<\cdot,\cdot\right>$ the duality product with $E^*$. If moreover $E$ is an inner product space, we canonically identify $E^*$ with $E$ so that $\dpr{\cdot,\cdot}$ also denotes its inner product. For brevity, the unit ball of $E$ is shortened as $\ball_{E}$ and its unit sphere as $\sphere_{E}$.
The set of nonnegative measures over $\Omega$ is denoted $\Meas_+(\Omega)$ and we also write $\Meas(\Omega,E)$ to denote the set of $E$-valued measures. Given a non-trivial vector-valued measure $\mu$ on $\Omega$, we write $|\mu|$ to denote its associated positive measure and $\mu/|\mu|$ to denote the Radon--Nikodym derivative of $\mu$ with respect to $|\mu|$. Lastly, we write $\mu = \mu^a + \mu^s$ to denote the Radon--Nikodym--Lebesgue decomposition of $\mu$ into its absolutely continuous and singular parts.
Henceforth, $\sou, \tar$ are finite dimensional inner product spaces.
We shall work with a constant coefficient, linear and homogeneous partial differential operator
\begin{align}\label{def:A}
	\Asp = \sum_{|\alpha| = k} A_\alpha \partial^\alpha, \qquad A_\alpha \in \mathrm{Hom}(\sou,\tar),
\end{align}
where as usual $\alpha = (\alpha_1,\dots,\alpha_d) \in \mathbb{N}^d$ are $d$-tuples with modulus $|\alpha| = \alpha_1 + \cdots + \alpha_d$ and $\partial^\alpha = \partial_1^{\alpha_1} \circ \cdots \circ \partial_d^{\alpha_d}$ denotes the composition of distributional partial derivatives with respect to the index $\alpha$. The operator $\Asp$ acts on $\sou$-valued tempered distributions over $\R^d$ as a $k$-homogeneous Fourier multiplier:
\[
(\Asp u)^{\widehat{}}(\xi) = (2\pi \mathrm i)^k \Afv(\xi) \widehat u(\xi),
\]
where the principal symbol map $\Afv : \R^d \mysm \{0\} \to \mathrm{Hom}(\sou,\tar)$, defined as
\[
\Afv(\xi) = \sum_{|\alpha|=k} A_\alpha \xi^\alpha \qquad \text{for any } \xi^\alpha = \xi_1^{\alpha_1} \cdots \xi_d^{\alpha_d},
\]
is the so-called (principal) symbol associated with $\Asp$. The Tartar wave cone \cite{tartarCompensatedCompactnessApplications1979}
\[
\Lambda_{\Asp} \coloneqq \bigcup_{|\xi|=1} \ker \Afv(\xi) \subset \sou
\]
is the algebraic set containing all vectors where $\Asp$ fails to be regularizing over planewaves with fixed vectorial part, i.e., $\pol \in \Lambda_{\Asp}$ if and only if there exists $\xi \in \R^d \mysm \{0\}$ such that $\Asp (\pol \, \mathrm{e}^{2\mathrm i\pi \left<\boldsymbol{\cdot},\xi\right>}) = 0$.
%%  2.1  Compensated Korn inequalities  --------------------------------------
\subsection{Compensated Korn inequalities}
We start by obtaining Korn-type inequalities for solutions of $\Asp u = 0$. The underlying idea is the following: if $V \cap \Lambda_{\Asp} = \{0\}$, then the operator $\Asp$ can be complemented with the projection on the orthogonal of $V$ to get an elliptic operator. The classical Calderón--Zygmund estimates then allow one to control the $L^{1,\infty}$-norm of an $\Asp$-free function by that of its projection on $V^{\perp}$, as in \eqref{eq:korntype:statement} below. Here, as usual
\begin{align*}
	\|u\|_{L^{1,\infty}} \coloneqq \sup_{\lambda > 0} \lambda \, \mathscr L^d(\{|u| > \lambda\}).
\end{align*}
This argument appears at various places in the case of the row-wise curl operator, although in perhaps implicit formulations (\cite[Th.~3]{tartarCompensatedCompactnessMethod1983}, \cite[Th.~4.1]{bhattacharyaRestrictionsMicrostructure1994}, \cite{dipernaCompensatedCompactnessGeneral1985}). Here we provide a proof for constant-rank operators valid for higher-dimensional elliptic planes. The novelty of our construction is the use of an explicit multiplier construction, opposed to the classical na\"ive pseudo-inverse approach, which allows us to obtain the crucial distance estimate on the degeneracy of the estimate as $V$ approaches the wave cone:
\begin{lemma}[Compensated Korn inequality]\label{res:korntype}
	Let $\Asp$ be an operator satisfying the constant-rank \Cref{ass:constant_rank} and let $V \subset \sou$ be a subspace such that
	\[
	V \cap \Lambda_{\Asp} = \{0\}.
	\]
	Then, the Korn constant
	\begin{align*}
		\const{V} \coloneqq \sup \left\{ \frac{\|u\|_{L^{1,\infty}}}{\|\dist(u, V)\|_{L^1}} \st u \in \regC_c^{\infty}(\mathbb{R}^d;\sou) \cap \ker \Asp,\ u \not\equiv 0 \right\}
	\end{align*}
	is finite and satisfies
	\begin{align}\label{eq:korntype:statement}
		\const{V} \le \ckorn \, d(V,\Lambda_{\Asp})^{-(\lfloor \frac{d}{2} \rfloor + 2)}
	\end{align}
	for some constant $\ckorn \ge 1$ depending solely on the coefficients of $\Afv$ and the dimension $d$.
\end{lemma}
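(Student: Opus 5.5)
Let $P := P_{V^\perp}$ denote the orthogonal projection onto $V^\perp$, so that $\dist(u,V) = |Pu|$. The plan is to write every $u \in \regC^\infty_c(\R^d;\sou)\cap\ker\Asp$ as a homogeneous Fourier multiplier of order zero applied to $w := Pu$. Then we invoke the weak-type $(1,1)$ bound of the Mikhlin--Hörmander theorem, with the operator norm tracked quantitatively in terms of $\delta := d(V,\Lambda_\Asp)$.

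\textbf{Algebraic identity.} Fix $\xi \in \sphere^{d-1}$ and set $\Pi(\xi)$ to be the orthogonal projection onto $\ker\Afv(\xi)$. Since $\Asp u = 0$, we have $\widehat u(\xi) \in \ker \Afv(\xi)$ for $\xi\neq 0$. The restriction $P|_{\ker\Afv(\xi)}$ is injective, because $\ker\Afv(\xi)\cap V \subset \Lambda_\Asp\cap V=\{0\}$. Quantitatively, for unit $y \in \ker\Afv(\xi)\subset\Lambda_\Asp$ one has
\[
|Py| = \dist(y,V) \ \ge\ c\,\delta .
\]
Indeed, if $y$ were close to some $x\in V$, then normalising $x$ would yield a point of $V\cap\sphere_\sou$ close to $\Lambda_\Asp$ (the cone is homogeneous). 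Hence the operator $L(\xi) := P\Pi(\xi)$ has smallest nonzero singular value $\gtrsim \delta$ on its range. We define
\[
m(\xi) := \big(L(\xi)\big)^{\dagger} \qquad \text{(Moore--Penrose pseudo-inverse)},
\]
extended $0$-homogeneously. This gives $\widehat u = m(\xi)\,\widehat{w}$ and $|m(\xi)| \lesssim \delta^{-1}$. The explicit construction alluded to in the paper is probably a smooth version of this formula, for instance $m = (\Pi P \Pi + (1-\Pi))^{-1}\Pi P$, which avoids rank jumps.

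\textbf{Smoothness of the multiplier.} By the constant-rank \Cref{ass:constant_rank}, $\Pi(\xi)$ is real-analytic on the sphere with bounded derivatives, depending only on $\Afv$. This follows from the standard formula $\Pi = \id - \Afv^\dagger\Afv$ together with the derivative bounds for pseudo-inverses collected in the appendix. Differentiating $m = (\Pi P\Pi + 1-\Pi)^{-1}\Pi P$, each derivative produces at most one additional factor of the inverse, whose norm is $\lesssim\delta^{-2}$ (or $\delta^{-1}$ after careful bookkeeping). This yields
\[
|\partial^\gamma m(\xi)| \lesssim \delta^{-(|\gamma|+1)}
\]
for $|\xi|=1$ and $|\gamma| \le \lfloor d/2\rfloor+1$. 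The Mikhlin condition requires exactly $\lfloor d/2\rfloor+1$ derivatives. The weak-$(1,1)$ constant of the associated operator $T_m$ is then bounded by a dimensional constant times
\[
\max_{|\gamma|\le \lfloor d/2\rfloor+1}\ \sup_\xi |\xi|^{|\gamma|}|\partial^\gamma m| \ \lesssim\ \delta^{-(\lfloor d/2\rfloor+2)} .
\]
This is exactly the exponent claimed in \eqref{eq:korntype:statement}.

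\textbf{Conclusion and main obstacle.} We obtain $\|u\|_{L^{1,\infty}} = \|T_m w\|_{L^{1,\infty}} \le C\,\delta^{-(\lfloor d/2\rfloor+2)}\|w\|_{L^1}$, and $\|w\|_{L^1} = \|\dist(u,V)\|_{L^1}$. One technical point needs care: $w$ is smooth with compact support, so $\widehat w$ is Schwartz and the identity $u = T_m w$ holds pointwise; no mean-zero issue arises since $m$ is bounded. The main obstacle is the derivative bookkeeping in the smoothness step. One must ensure that derivatives of the inverse cost only one factor $\delta^{-1}$ each rather than $\delta^{-2}$, which requires choosing the multiplier formula so that the invertible block has lower bound $\delta$ rather than $\delta^2$. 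Failing that, I would settle for an exponent obtained with $\delta^{-2}$ and try to sharpen it via the pseudo-inverse identities in the appendix. Finally, I would verify $\ckorn\ge1$ by enlarging the constant if needed.
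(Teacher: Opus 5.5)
\textbf{Where the proof breaks: the derivative estimate.}

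Your architecture matches the paper's. You use a zero-homogeneous multiplier given by a Moore--Penrose pseudo-inverse of a constant-rank matrix field, an operator-norm bound $\delta^{-1}$, and Mikhlin's weak-$(1,1)$ theorem with $\lfloor d/2\rfloor+1$ derivatives. Your multiplier $L^\dagger$, with $L=P_{V^\perp}\Pi(\xi)$, differs from the paper's $I_{\sou}-(P_{\ker\Afv(\xi)^\perp}P_V)^\dagger$, but it is legitimate:
\begin{itemize}
\item $P_{V^\perp}$ is injective on $\ker\Afv(\xi)$, so $L^\dagger L=\Pi(\xi)$ and $\widehat u=L^\dagger\widehat w$.
\item $\rank L=\dim\ker\Afv(\xi)$ is constant.
\item $\|L^\dagger\|\le\delta^{-1}$. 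Your normalisation argument works, and in fact $c=1$, because $\inf_{y\in\ker\Afv(\xi)\cap\sphere_{\sou}}\dist(y,V)$ and $\inf_{x\in V\cap\sphere_{\sou}}\dist(x,\ker\Afv(\xi))$ both equal the sine of the smallest principal angle.
\end{itemize}

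The problem is the derivative estimate, which carries the entire quantitative content of the lemma. With $B=\Pi P\Pi+(1-\Pi)$ one has $\|B^{-1}\|=\sigma_{\min}(L)^{-2}$, which is of order $\delta^{-2}$. Every derivative of $B^{-1}$ creates a new factor $B^{-1}(\partial B)B^{-1}$. The count you describe therefore gives $\|\partial^\gamma m\|\lesssim\delta^{-(2|\gamma|+1)}$, and a weak-$(1,1)$ constant of order $\delta^{-(2\lfloor d/2\rfloor+3)}$. The claim ``this yields $|\partial^\gamma m|\lesssim\delta^{-(|\gamma|+1)}$'' does not follow from what precedes it. Your stated fallback would prove a strictly weaker lemma.

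\textbf{What is missing.}

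The cure is not a different formula whose invertible block is bounded below by $\delta$. The saving of one power per derivative comes from cancellations intrinsic to pseudo-inverses of constant-rank families. Already at first order one has $\Pi\,(\partial\Pi)\,\Pi=0$. The Golub--Pereyra formula
\[
\partial W^\dagger=-W^\dagger(\partial W)W^\dagger+W^\dagger(W^\dagger)^*(\partial W)^*(I-WW^\dagger)+(I-W^\dagger W)(\partial W)^*(W^\dagger)^*W^\dagger
\]
shows that each term carries exactly two factors of $W^\dagger$, not three.

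The missing ingredient is \Cref{res:boundPseudo}. For any smooth $0$-homogeneous field $W$ of constant rank,
\[
\|\partial^\alpha W^\dagger(\xi)\|\le C_k|\xi|^{-|\alpha|}\|W^\dagger(\xi)\|^{|\alpha|+1},
\]
with $C_k$ depending only on the derivatives of $W$ on the sphere. The inductive proof uses $W=QW$, $Q=Q^{|\alpha|+1}$ and $W^\dagger=W^\dagger WW^\dagger$. Crucially, it also uses the improved bound $\|W\,\partial^\alpha W^\dagger\|\lesssim\|W^\dagger\|^{|\alpha|}$, obtained by differentiating $Q=WW^\dagger$, which removes the extra power a naive Leibniz expansion produces.

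Apply this directly to your $L$: it has constant rank, and $\|\partial^\alpha L\|\le\|\partial^\alpha\Pi\|$ uniformly in $V$. You then get $\|\partial^\gamma L^\dagger\|\lesssim\delta^{-(|\gamma|+1)}$, which closes your argument exactly as in the paper. Without it, the exponent $\lfloor d/2\rfloor+2$ is not established.
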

\begin{remark}
	Establishing $\const{V} < \infty$ for elliptic subspaces is relatively simple (by mimicking M\"uller's trick via weak $L^1$ estimates). In particular, the proofs of \Cref{res:spreading}\ref{item:spreading:exif} and the De Philippis--Rindler Theorem for constant rank operators follow from relatively simple arguments. Establishing upper bounds for $\const{V}$ in terms of $d(V,\Lambda_\Asp)$ is what requires a considerably more delicate analysis.
\end{remark}
\begin{proof}
	Denote by $P_V$ and $P_{V^\perp}$ the orthogonal projections on $V$ and its orthogonal complement, and $\Pi(\xi) = P_{\ker \Afv(\xi)^\perp}$, the projection on $\ker \Afv(\xi)^\perp$, as a subspace of $\sou$. The idea is to construct a parameterized multiplier $\Mfv(\xi) : \sou \to \sou$ solving the algebraic equation
	\begin{align}\label{korntype:defM}
		\Mfv(\xi) P_{V^\perp}\widehat u(\xi) = \widehat u(\xi) \qquad \text{for all $u$ satisfying $\widehat u(\xi) \in \ker \Afv(\xi)$,}
	\end{align}
	and show that $\Mfv(\cdot)$ satisfies the assumption of Mikhlin's theorem.
	Let $u$ be an $\Asp$-free Schwartz map, $v \coloneqq P_{V^\perp} u$, and $w \coloneqq P_V u$, so that $\widehat u = \widehat v + \widehat w$. Because $\widehat u(\xi) \in \ker \Afv(\xi)$ for all $\xi \in \R^d \mysm \{0\}$, there holds $\Pi(\xi) \widehat u = 0$, or equivalently $\Pi(\xi) \widehat w = - \Pi(\xi) \widehat v$. Taking the projection on $V$, we get
	\begin{align}\label{korntype:projV}
		P_V \Pi(\xi) P_V \widehat w = - P_V \Pi(\xi) \widehat v.
	\end{align}
	Denote $W(\xi) \coloneqq \Pi(\xi) P_{V} : \sou \to \sou$. Then $W(\xi)^* W(\xi) = P_V \Pi(\xi) P_V$ is invertible on $V$, since the kernel of $W(\xi)^* W(\xi)|_V$ is $\ker \Afv(\xi)$, which intersects $V$ only at $0$. In consequence, we may solve \eqref{korntype:projV} with the Moore-Penrose pseudo-inverse $(W(\xi)^* W(\xi))^{-1} W(\xi)^*$, to obtain
	\begin{align*}
		\widehat w(\xi) = - W(\xi)^{\dagger} \widehat v(\xi) \quad \implies \quad \widehat u(\xi) = \widehat v(\xi) - W(\xi)^{\dagger} \widehat v(\xi).
	\end{align*}
	Therefore the multiplier in \eqref{korntype:defM} can be taken as
	\[
	\Mfv(\xi) = I_{\sou} - W(\xi)^{\dagger}.
	\]
	Since $\Asp$ has constant rank, the application $\xi \to \Pi(\xi)$ is smooth away from the origin. Moreover, the rank of $W(\xi)$ is independent of $\xi \in \mathbb{R}^d \mysm \{0\}$: indeed, the image of $W(\xi)$ is equal to $P_{\ker \Afv(\xi)^{\perp}}(V)$, which has the same dimension as $V$ since $\ker \Afv(\xi) \cap V = \{0\}$.
	By \Cref{res:boundPseudo} in \Cref{appx:bound} (based on \cite{golubDifferentiationPseudoInversesNonlinear1973}), there exist constants $(C_{\ell})_{\ell \in \mathbb{N}_*}$ with $C_{\ell}$ depending only on $\sup_{|\alpha| \leqslant \ell} \sup_{|\xi| = 1} \|\partial^{\alpha}_{\xi} W(\xi)\|$ such that
	\begin{align*}
		\|\partial^{\alpha}_{\xi} \Mfv(\xi)\| \leqslant C_{\ell} |\xi|^{-\ell} \|W(\xi)^{\dagger}\|^{\ell+1} \qquad \forall \xi \in \mathbb{R}^d \mysm \{0\} \text{ and } |\alpha| = \ell.
	\end{align*}
	As $\|\partial^{\alpha}_{\xi} W(\xi)\| = \|\partial^{\alpha}_{\xi} \Pi(\xi) P_V\| \leqslant \|\partial^{\alpha}_{\xi} \Pi(\xi)\|$, the constants $C_{\ell}$ are bounded independently of the subspace $V$.
	To estimate $\|W^{\dagger}(\xi)\|$, note that $W^{\dagger}(\xi) v = 0$ whenever $v$ is orthogonal to the image of $W(\xi)$. Hence we can write
	\begin{align*}
		\|W^{\dagger}(\xi)\|
		= \sup_{v \neq 0} \frac{|W^{\dagger}(\xi) v|}{|v|}
		= \sup_{w \neq 0} \frac{|W^{\dagger}(\xi) W(\xi) w|}{|W(\xi) w|}
		= \sup_{w \neq 0} \frac{|\proj_{\mathrm{Im} W^*(\xi)} w|}{|\proj_{\ker \Afv(\xi)^{\perp}} P_V w|}.
	\end{align*}
	Since the image of $W^*$ is contained in $V$, the rightmost term is unchanged if $w$ is replaced by its projection on $V$. As $\ker\Afv(\xi) \subset \Lambda_\Asp$, for any $w \in V \mysm \{0\}$ there holds $|\proj_{\ker \Afv(\xi)^{\perp}} w| = \dist(w, \ker \Afv(\xi)) \ge \dist(w,\Lambda_\Asp) \geqslant |w| d(V,\Lambda_{\Asp})$, whence
	\begin{align*}
		\|W^{\dagger}(\xi)\|
		= \sup_{w \in V \mysm \{0\}} \frac{|\proj_{\mathrm{Im} W^*(\xi)} w|}{|\proj_{\ker \Afv(\xi)^{\perp}} w|}
		\leqslant \sup_{w \in V \mysm \{0\}} \frac{|\proj_{\mathrm{Im} W^*(\xi)} w|}{|w| d(V, \Lambda_{\Asp})}
		= \frac{1}{d(V, \Lambda_{\Asp})}.
	\end{align*}
	Note in passing that $\|W^{\dagger}(\xi)\| \ge 1$: indeed $W(\xi)$ is a product of orthogonal projections, so $\|W(\xi)\| \le 1$, while $W^{\dagger} = W^{\dagger} W W^{\dagger}$ forces $\|W^{\dagger}\| \le \|W^{\dagger}\|^2 \|W\|$.
	Finally, we define the multiplier operator $T_V u = (\Mfv(\cdot) \widehat u)^\vee$. For $\ell = 0$ the displayed bound also holds, since $\Mfv = I_{\sou} - W^{\dagger}$ gives $\|\Mfv(\xi)\| \le 1 + \|W^{\dagger}(\xi)\| \le 2\|W^{\dagger}(\xi)\|$ by the observation above. Given that $\Mfv(\cdot)$ is a smooth zero-homogeneous multiplier and Mikhlin's theorem \cite[Theorem~6.2.7]{grafakosClassicalFourierAnalysis2014}---whose weak type $(1,1)$ bound depends linearly on $\sup_{|\alpha| \le \lfloor d/2\rfloor+1}\sup_{\xi \ne 0}|\xi|^{|\alpha|}\|\partial^\alpha \Mfv(\xi)\|$---only requires to bound the first $\lfloor \frac{d}{2} \rfloor + 1$ derivatives of $\Mfv$, we conclude that
	\[
	\| T_V \|_{L^1 \to L^{1,\infty}}
	\le \sum_{\ell = 0}^{\lfloor \frac{d}{2} \rfloor + 1} \frac{C_\ell}{d(V,\Lambda_{\Asp})^{\ell+1}}
	\le \frac{\ckorn }{d(V,\Lambda_{\Asp})^{\lfloor \frac{d}{2} \rfloor + 2}}
	\]
	for a sufficiently large constant $\ckorn $ depending only on $\Afv$ and the dimension $d$. Since by construction $T_V P_{V^\perp} u = u$ for all $\Asp$-free maps, and $\|P_{V^\perp} u\|_{L^1} = \|\dist(u,V)\|_{L^1}$, we conclude that
	\begin{align}\label{eq:korn:final}
		\| u \|_{L^{1,\infty}}
		= \|T_V P_{V^\perp} u\|_{L^{1,\infty}}
		\le \ckorn \, d(V,\Lambda_{\Asp})^{-\lfloor \frac{d}{2} \rfloor - 2}\,\|\dist(u,V)\|_{L^1}.
	\end{align}
	By definition $\const{V}$ is the optimal constant satisfying \eqref{eq:korn:final}, hence \eqref{eq:korntype:statement} follows.
\end{proof}
\begin{remark}[Optimal $L^p$ constant]
	An analogous statement holds for $p \in (1,\infty)$ and replacing the $L^1$ and $L^{1,\infty}$ norms in the definition of $\const{\cdot}$ with the $L^p$ norm.
\end{remark}
We will need a version of \Cref{res:korntype} on periodic $\Asp$-free functions. The transfer of inequalities from $\R^d$ to the torus is a classical topic in Fourier analysis, see \cite[Section~4.3.2]{grafakosClassicalFourierAnalysis2014}. However in our case, one can also give a short proof based on potentials for constant-rank operators \cite{raitaPotentialsAquasiconvexity2019}, using standard arguments in homogenization \cite{fonsecaAQuasiconvexityLowerSemicontinuity1999}.
Denote $\mathbb{T}^d$ the $d$-dimensional torus, with $\|\cdot\|_{L^p(\mathbb{T}^d)}$ the standard $L^p$ norm on $\mathbb{T}^d$, and
\begin{align*}
	\|u\|_{L^{1,\infty}(\mathbb{T}^d)}
	\coloneqq \sup_{\lambda > 0} \lambda\, \mathscr L^d_{\mathbb{T}^d}(\{|u| > \lambda\}).
\end{align*}
\begin{lemma}\label{res:transfer}
	Let $\const{V}$ be a constant such that $\|u\|_{L^{1,\infty}} \leqslant \const{V} \|\dist(u,V)\|_{L^1}$ for any $u \in \regC^{\infty}_c(\R^d;\sou)$ such that $\Asp u = 0$. Then
	\begin{align*}
		\|w\|_{L^{1,\infty}(\mathbb{T}^d)} \leqslant \const{V} \|\dist(w,V)\|_{L^1(\mathbb{T}^d)}
	\end{align*}
	for any $w \in \regC^{\infty}(\mathbb{T}^d;\sou)$ that is periodic, $\Asp$-free, and with $\int_{\mathbb{T}^d} w \, dx = 0$.
\end{lemma}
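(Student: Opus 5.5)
The idea is to transfer the inequality from $\R^d$ to the torus by a standard homogenization argument. We extend $w$ periodically and localize with a large cutoff, using the constant-rank potential of \cite{raitaPotentialsAquasiconvexity2019} so that the localized field remains exactly $\Asp$-free. Since $w$ is smooth, periodic, mean-zero and $\Asp$-free, we first write $w = \Bsp U$ for a smooth $\mathbb{T}^d$-periodic potential $U$. Here $\Bsp$ is a homogeneous constant-coefficient operator of some order $k'$ with $\ker \Afv(\xi) = \operatorname{Im} \mathbb{B}(\xi)$ for $\xi \neq 0$. On the Fourier side this amounts to setting $\widehat U(n) = \mathbb{B}(n)^\dagger \widehat w(n)$ for $n \in \mathbb{Z}^d\mysm\{0\}$, which is legitimate because $\widehat w(0)=0$ and $\widehat w(n) \in \ker\Afv(n)$. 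Smoothness of $U$ follows from the homogeneity of the pseudo-inverse together with the rapid decay of $\widehat w(n)$.

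Next, for $R \to \infty$, fix a cutoff $\varphi_R(x) = \varphi(x/R)$ with $\varphi \in \regC^\infty_c(\ball_2)$, $0\le\varphi\le1$ and $\varphi = 1$ on $\ball_1$. We set
\[
u_R \coloneqq \Bsp(\varphi_R U) = \varphi_R\, w + \sum_{0<|\beta|\le k'} c_\beta\, \partial^\beta \varphi_R\, \partial^{\gamma(\beta)} U.
\]
This field is compactly supported, smooth and $\Asp$-free, since $\Asp\Bsp = 0$ holds as an identity of symbols. The error terms are bounded pointwise by $C R^{-1}\|U\|_{C^{k'}}$ and are supported in $\ball_{2R}$, so their $L^1$ norm is $O(R^{d-1})$. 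Applying the hypothesis to $u_R$ and using $|\dist(a,V)-\dist(b,V)|\le|a-b|$, we get
\[
\|u_R\|_{L^{1,\infty}} \le \const{V}\Big(\int_{\ball_{2R}} \varphi_R \dist(w,V)\,dx + O(R^{d-1})\Big).
\]
By periodicity, the integral on the right equals $\big(\int\varphi_R\big)\,\|\dist(w,V)\|_{L^1(\mathbb{T}^d)} + O(R^{d-1})$.

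For the left-hand side, fix $\lambda>0$ and $\eps>0$. On $\ball_R$ we have $|u_R| \ge |w| - CR^{-1}$, so for $R$ large
\[
\lambda\,\mathscr L^d(\{|u_R|>\lambda\}) \ge \lambda\,\mathscr L^d(\{x\in\ball_R : |w(x)|>\lambda+\eps\}) = \lambda\,|\ball_R|\,\mathscr L^d_{\mathbb{T}^d}(\{|w|>\lambda+\eps\}) + O(R^{d-1}).
\]
We divide both sides by $|\ball_R|$ and let $R\to\infty$. On the right, $\int\varphi_R/|\ball_R| \to \int\varphi/|\ball_1|$, which is not $1$. The fix is to choose $\varphi$ with $\varphi = 1$ on $\ball_1$ and supported in $\ball_{1+\delta}$, and then send $\delta\to0$ after $R\to\infty$. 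This costs only $O_\delta(R^{d-1})$ in the gradients, which is harmless. The result is
\[
\lambda\,\mathscr L^d_{\mathbb{T}^d}(\{|w|>\lambda+\eps\}) \le \const{V}\|\dist(w,V)\|_{L^1(\mathbb{T}^d)}.
\]
Letting $\eps\downarrow0$ (the sets increase to $\{|w|>\lambda\}$) and taking the supremum over $\lambda$ gives the claim. The definition of the torus weak norm uses the normalized Lebesgue measure $\mathscr L^d_{\mathbb{T}^d}$; both sides scale consistently, so the constant $\const{V}$ is unchanged.

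The main obstacle is the first step: producing the potential $U$ with enough regularity and keeping the commutator terms uniformly small. This is exactly where the constant-rank assumption enters, through the existence of $\Bsp$ from \cite{raitaPotentialsAquasiconvexity2019}. If one prefers to avoid potentials, the alternative is to correct $\varphi_R w$ by subtracting the projection onto $\ker\Asp$ via the Fourier multiplier $\Pi(\xi)$. The error $\Asp(\varphi_R w)$ involves only derivatives of $\varphi_R$, but that projection is not bounded on $L^1$ and would force a detour through $L^2$ estimates on the error. For that reason the potential route is the cleaner one.
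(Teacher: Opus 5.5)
Your proof is correct and follows essentially the same route as the paper (Appendix~B). Both arguments write $w = \Bsp v$ via \Raita's periodic potential and apply $\Bsp$ to a cut-off of the periodic extension, so the localized field is exactly $\Asp$-free. Both then bound the commutator in $L^1$ by $O(N^{d-1})$, compare the weak norm on the region where the cut-off equals $1$ with the $L^1$ norm on its support, and let the scale tend to infinity.

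The differences are cosmetic. The paper's cube cut-off $\varphi_N$ equals $1$ on $[-N,N]^d$ and has a transition layer of fixed unit width, so the volume ratio $((N+1)/N)^d \to 1$ comes for free and your extra $\delta \to 0$ limit is not needed. Also, since $\varphi_R \equiv 1$ on $\ball_R$, you have $u_R = w$ exactly there, so the $\eps$-shift in your lower bound is unnecessary.
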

The proof is postponed to \Cref{appx:transfer}.
%%  2.2  Integrands and A-quasiconvex functions  -----------------------------
\subsection{Integrands and $\Asp$-quasiconvex functions}\label{sec:Aqcf}

We will be interested only in a class of functions with linear growth and a well-defined recession function, as we now introduce.
Following \cite{alibertNonUniformIntegrabilityGeneralized1997}, define $\predual(\sou)$ as the set of continuous $f : \sou \to \R$ such that the function
\begin{align*}
	\hat{f} : \ball_{\sou} \to \R, \qquad
	\hat{f}(\var) \coloneqq (1-|\var|) f\left(\frac{\var}{1-|\var|}\right)
\end{align*}
extends continuously to an element of $\regC(\overline{\ball}_{\sou})$. In particular, such $f$'s have at most linear growth. $\predual(\sou)$ is a separable Banach space when endowed with $\|f\| \coloneqq \|\hat{f}\|_{\infty}$, and each $f \in \predual(\sou)$ admits a \emph{recession function}, given by
\begin{align*}
	f^{\infty}(\var) \coloneqq \lim_{t \to \infty} \frac{f(t \var)}{t}.
\end{align*}
We refer the reader to \cite[Section~2]{alibertNonUniformIntegrabilityGeneralized1997} and \cite[Section~1.3]{arroyo-rabasaCharacterizationGeneralizedYoung2021} for details on $\predual(\sou)$. Similarly, one defines $\predual(\Omega,\sou)$ as the space of all continuous integrands $f : \Omega \times \sou \to \R$ with uniform linear growth at infinity
\[
\exists C > 0, \qquad |f(x,\var)| \le C(1 + |\var|) \qquad \forall (x,\var) \in \Omega \times \sou
\]
such that the following limit exists
\begin{align*}
	f^{\infty}(x,\var) \coloneqq \lim_{\substack{x' \to x \\ \var' \to \var \\ t \to \infty}} \frac{f(x', t \var')}{t}
\end{align*}
and defines a continuous function on $\overline \Omega \times \sou$.
We recall the concept of quasiconvexity for general $\Asp$-free structures introduced by Fonseca and M\"uller:
\begin{definition}[{\cite{fonsecaAQuasiconvexityLowerSemicontinuity1999}}]\label{def:Aqc}
	A continuous function $f : \sou \to \R$ is called $\Asp$-quasiconvex if for every $\var \in \sou$ it holds
	\[
	f(\var) \le \int_{[0,1]^d} f(\var + u(y)) \, dy
	\]
	for all $\Asp$-free periodic fields $u \in \regC^{\infty}(\mathbb T^d;\sou)$ with $\int_{[0,1]^d} u \, dy = 0$, where $\mathbb T^d = \R^d / \mathbb Z^d$ is the $d$-dimensional flat torus.
\end{definition}
\begin{remark}
	It has been proved in \cite{raitaPotentialsAquasiconvexity2019} that if $\Asp$ has constant rank, then the definition of $\Asp$-quasiconvexity remains unchanged by testing with $\Asp$-free fields $u \in \regC_c^\infty((0,1)^d;\sou)$.
\end{remark}
\begin{definition}
	We denote by $\predualAqc(\sou)$ the set of $\Asp$-quasiconvex functions in $\predual(\sou)$.
\end{definition}
We will use that a continuous $f : \sou \to \R$ is $\Asp$-quasiconvex if and only if it coincides with its $\Asp$-quasiconvex envelope $\QC f : \sou \to \R$, defined as
\begin{align*}
	\QC f(\var) \coloneqq \inf \left\{ \int_{[0,1]^d} f(\var + u(y)) \, dy \ :\ \begin{matrix} u \in \regC^{\infty}(\mathbb T^d;\sou) \\ \text{has zero average, and } \Asp u = 0 \end{matrix} \right\}.
\end{align*}
The following elementary observation is the source of everything in this subsection: an $\Asp$-free periodic field of zero average cannot leave the linear span of the wave cone
\[
	\sou_\Asp \coloneqq \myspan \Lambda_\Asp .
\]
\begin{lemma}\label{res:VAvalued}
	Let $u \in \regC^{\infty}(\mathbb T^d;\sou)$ be $\Asp$-free with $\int_{\mathbb T^d} u \, dy = 0$. Then $u$ takes values in $\sou_\Asp$.
\end{lemma}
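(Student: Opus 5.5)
The natural route is Fourier series. Write the smooth periodic field as $u(y) = \sum_{\xi \in \mathbb Z^d} \widehat u(\xi)\, e^{2\pi \mathrm i \dpr{y,\xi}}$, with coefficients $\widehat u(\xi) \in \sou \otimes \mathbb C$ decaying faster than any power of $|\xi|$. The zero-average hypothesis says exactly that $\widehat u(0) = 0$. Because $\Asp$ has constant coefficients, applying it termwise gives
\[
\Asp u = \sum_{\xi \neq 0} (2\pi\mathrm i)^k \Afv(\xi)\, \widehat u(\xi)\, e^{2\pi \mathrm i \dpr{y,\xi}} .
\]
By uniqueness of Fourier coefficients, $\Asp u = 0$ then forces $\Afv(\xi)\widehat u(\xi) = 0$ for every $\xi \neq 0$.

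The next step is to pass from complex coefficients to real vectors. Since $\Afv(\xi)$ has real entries, its kernel in $\sou \otimes \mathbb C$ is the complexification of the real kernel $\ker \Afv(\xi) \subset \sou$. So both $\mathrm{Re}\, \widehat u(\xi)$ and $\mathrm{Im}\, \widehat u(\xi)$ lie in $\ker \Afv(\xi)$. By $k$-homogeneity, $\ker \Afv(\xi) = \ker \Afv(\xi/|\xi|) \subset \Lambda_\Asp$, hence both lie in $\sou_\Asp$.

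Each term $\widehat u(\xi) e^{2\pi\mathrm i\dpr{y,\xi}}$ therefore takes values in the complexification $\sou_\Asp \otimes \mathbb C$ for every $y$. This is a closed subspace, so the absolutely convergent sum $u(y)$ also lies in it. Since $u$ is real-valued and $(\sou_\Asp \otimes \mathbb C) \cap \sou = \sou_\Asp$, we conclude $u(y) \in \sou_\Asp$ for all $y$.

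None of these steps is a real obstacle. The only point needing care is the identification of the complex kernel with the complexification of the real one, which holds because $\Afv(\xi)$ is a real matrix. Equivalently, one can pair the sum of the terms at $\xi$ and $-\xi$ into a real trigonometric polynomial with coefficients $\mathrm{Re}\,\widehat u(\xi)$ and $\mathrm{Im}\,\widehat u(\xi)$, both in $\ker \Afv(\xi)$. Constant rank is not needed here.
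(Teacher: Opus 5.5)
Your proof is correct and follows the same route as the paper: expand $u$ in Fourier series, use $\widehat u(0)=0$ and $\Afv(\xi)\widehat u(\xi)=0$ for $\xi\neq 0$, identify the complex kernel of the real matrix $\Afv(\xi)$ with the complexification of $\ker\Afv(\xi)=\ker\Afv(\xi/|\xi|)\subset\Lambda_\Asp$, and sum the series. You also spell out two steps the paper leaves implicit: the closedness of $\sou_\Asp\otimes\mathbb C$, and the real-valuedness of $u$ giving $(\sou_\Asp\otimes\mathbb C)\cap\sou=\sou_\Asp$.
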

\begin{proof}
	For $k \in \mathbb Z^d \mysm \{0\}$ the Fourier coefficients of $u$ satisfy $\Afv(k) \widehat u(k) = 0$. As $\Afv(k)$ has real entries, its complex kernel is the complexification of $\ker \Afv(k)$, and $\ker \Afv(k) = \ker \Afv(k/|k|) \subset \Lambda_\Asp \subset \sou_\Asp$ by $k$-homogeneity of the symbol. Hence $\widehat u(k) \in \sou_\Asp \otimes \mathbb C$ for every $k \neq 0$, while $\widehat u(0) = 0$; summing the Fourier series gives $u(y) \in \sou_\Asp$ for every $y$.
\end{proof}
We next record that $\Asp$-quasiconvex functions of linear growth are Lipschitz along the directions of $\sou_\Asp$, \emph{uniformly with respect to the base point}.
\begin{lemma}\label{res:coneLip}
	Let $f : \sou \to \R$ be continuous, $\Asp$-quasiconvex and of linear growth at infinity. Then there exists $C = C(\Asp)$, depending also on the growth constant of $f$, such that
	\begin{equation}\label{eq:coneLip}
		|f(\varalt + \var) - f(\varalt + \var')| \le C |\var - \var'|
		\qquad \forall\, \varalt \in \sou, \quad \forall\, \var, \var' \in \sou_\Asp .
	\end{equation}
\end{lemma}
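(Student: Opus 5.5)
The idea is to reduce the statement to the classical fact that $\Asp$-quasiconvex functions are convex along wave-cone directions, and then to exploit linear growth.

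\emph{Step 1: one-dimensional convexity along $\Lambda_\Asp$.} For $\zeta \in \Lambda_\Asp$ I would first check that $t \mapsto f(\varalt + t\zeta)$ is convex on $\R$. Fix $\xi$ with $\Afv(\xi)\zeta = 0$. Replacing $\xi$ by a rational approximation is not legitimate in general, since $\ker\Afv(\xi)$ depends on $\xi$. I would therefore use the standard approach: consider the laminate $u(y) = \zeta\, h(y\cdot\xi)$ with $h$ one-periodic and of zero average. This field is $\Asp$-free, but it is periodic on a lattice adapted to $\xi$ rather than on $\mathbb Z^d$.

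To handle the lattice issue I would use the remark following \Cref{def:Aqc}: under constant rank (\cite{raitaPotentialsAquasiconvexity2019}), $\Asp$-quasiconvexity may be tested with compactly supported $\Asp$-free fields. Using potentials, $u = \mathcal B\phi$, one cuts off the laminate inside the unit cube. The resulting error is $\Asp$-free and small in $L^1$, once the laminate is taken with high frequency and one exploits the linear growth of $f$ (which is only continuous with linear growth, so the error contributes $\lesssim$ its $L^1$ norm plus a modulus of continuity). Passing to the limit yields
\[
f(\varalt) \le \theta f(\varalt + (1-\theta)s\zeta) + (1-\theta) f(\varalt - \theta s\zeta),
\]
which is convexity along $\zeta$. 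This step is classical (Fonseca--M\"uller), and I expect it to be the main technical point if written in full. In the paper I would simply cite it as the $\Lambda_\Asp$-convexity of $\Asp$-quasiconvex functions.

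\emph{Step 2: uniform Lipschitz bound along one direction.} A convex function $g(t) = f(\varalt + t\zeta)$ on $\R$ satisfying $|g(t)| \le L(1 + |\varalt| + |t||\zeta|)$ has slopes bounded by $L|\zeta|$. Indeed, for $s < t$, convexity gives
\[
\frac{g(t) - g(s)}{t - s} \le \frac{g(R) - g(t)}{R - t}
\]
for every $R > t$, and letting $R \to \infty$ the right-hand side tends to at most $L|\zeta|$; the lower bound is symmetric. This holds independently of $\varalt$, which is exactly the desired uniformity.

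\emph{Step 3: extension to all of $\sou_\Asp$.} Since $\Lambda_\Asp$ spans $\sou_\Asp$, I would choose a basis $\zeta_1,\dots,\zeta_n \in \Lambda_\Asp \cap \sphere_\sou$ of $\sou_\Asp$. Any $\var - \var' \in \sou_\Asp$ can then be written as $\sum c_i \zeta_i$ with $\sum |c_i| \le C_0 |\var - \var'|$, where $C_0$ depends only on the basis, hence only on $\Asp$. Telescoping along the path
\[
\varalt + \var',\quad \varalt + \var' + c_1\zeta_1,\quad \dots,\quad \varalt + \var,
\]
and applying Step 2 on each segment gives~\eqref{eq:coneLip} with $C = C_0 L$.
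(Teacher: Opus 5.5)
Your proposal is correct and follows the paper's proof essentially step for step. The paper argues the same way: convexity of $t \mapsto f(\varalt + t\zeta)$ for $\zeta \in \Lambda_\Asp$ (cited from Fonseca--M\"uller, Proposition~3.4, as you also intend), then slope bounds of order $L|\zeta|$ independent of $\varalt$ from linear growth, then chaining along a basis of $\sou_\Asp$ chosen inside $\Lambda_\Asp$; your Step~2 simply spells out the one-dimensional slope estimate that the paper leaves implicit.
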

\begin{proof}
	For $\zeta \in \Lambda_\Asp$ and $\varalt \in \sou$ the function $t \mapsto f(\varalt + t\zeta)$ is convex \cite[Proposition~3.4]{fonsecaAQuasiconvexityLowerSemicontinuity1999} and of linear growth, hence Lipschitz with a constant proportional to $|\zeta|$ and independent of $\varalt$. Since $\Lambda_\Asp$ spans $\sou_\Asp$ and $-\Lambda_\Asp = \Lambda_\Asp$, we may fix a basis $\zeta_1,\dots,\zeta_n$ of $\sou_\Asp$ contained in $\Lambda_\Asp$; every $\var - \var' \in \sou_\Asp$ then decomposes as $\sum_i a_i\zeta_i$ with $\sum_i |a_i| |\zeta_i| \le K|\var - \var'|$, $K$ depending only on that basis, and \eqref{eq:coneLip} follows by chaining the $n$ one-dimensional estimates. See also \cite[Prop.~4.5]{arroyo-rabasaCharacterizationGeneralizedYoung2021}.
\end{proof}
Applying the same observation to the \emph{complementary} projection is even more useful: an integrand that depends only on the component transversal to $\sou_\Asp$ is $\Asp$-quasiconvex \emph{together with its opposite}, so that testing a Jensen inequality against it produces an \emph{equality}.
\begin{lemma}\label{res:transverse}
	Let $\Asp$ satisfy the constant-rank \Cref{ass:constant_rank}, write $Q \coloneqq P_{\sou_\Asp^{\perp}}$, and let $g : \sou_\Asp^{\perp} \to \R$ be continuous. If $g \circ Q \in \predual(\sou)$, then both $g \circ Q$ and $-\,g \circ Q$ belong to $\predualAqc(\sou)$, with $(-\,g\circ Q)^{\infty} = -\,(g\circ Q)^{\infty}$. Consequently $f + g \circ Q \in \predualAqc(\sou)$ for every $f \in \predualAqc(\sou)$.
\end{lemma}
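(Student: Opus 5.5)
The plan is to apply \Cref{res:VAvalued} directly. Take $\var \in \sou$ and an $\Asp$-free periodic field $u \in \regC^\infty(\mathbb T^d;\sou)$ with zero average. By \Cref{res:VAvalued}, $u(y) \in \sou_\Asp$ for every $y$, so $Q u(y) = 0$ and $Q(\var + u(y)) = Q\var$ pointwise. Hence
\[
\int_{[0,1]^d} g\circ Q(\var + u(y))\,dy = g(Q\var),
\]
so the defining inequality of \Cref{def:Aqc} holds with equality, for both $g\circ Q$ and $-g\circ Q$. Continuity of $\pm g\circ Q$ is immediate. Membership of $-g\circ Q$ in $\predual(\sou)$ is also immediate, since $\predual(\sou)$ is a linear space: $\widehat{(-h)} = -\hat h$ extends continuously whenever $\hat h$ does. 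The recession function identity $(-g\circ Q)^\infty = -(g\circ Q)^\infty$ then follows from the linearity of the limit defining $f^\infty$. So both functions lie in $\predualAqc(\sou)$.

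For the final claim, let $f \in \predualAqc(\sou)$ and set $h = g\circ Q$. Then $f+h \in \predual(\sou)$ by linearity. For admissible $u$ as above,
\[
(f+h)(\var) = f(\var) + h(\var) \le \int f(\var+u)\,dy + \int h(\var+u)\,dy,
\]
where we use the quasiconvexity of $f$ together with the equality just proved for $h$. This is exactly $\Asp$-quasiconvexity of $f+h$. Sums of $\Asp$-quasiconvex functions are in general only guaranteed to be $\Asp$-quasiconvex in this way because the inequality is linear in $f$, so no envelope argument is needed.

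The main point to check is that \Cref{res:VAvalued} is being used in the generality of \Cref{def:Aqc}, i.e.\ for smooth periodic test fields, which is precisely its hypothesis; the constant-rank assumption is not really needed here beyond matching the paper's standing conventions. I expect no step to be an obstacle; the only subtlety is noting that $g\circ Q\in\predual(\sou)$ is assumed rather than derived, so nothing about the growth of $g$ needs proving.
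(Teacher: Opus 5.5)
Your proof is correct and matches the paper's argument essentially verbatim. \Cref{res:VAvalued} gives $Qu\equiv 0$, so both $\pm\, g\circ Q$ satisfy the defining inequality of $\Asp$-quasiconvexity with equality, and the final claim follows by adding that equality to the inequality for $f$ (your side remark that sums are ``only guaranteed'' to be quasiconvex in this way is oddly phrased, since any sum of $\Asp$-quasiconvex functions is $\Asp$-quasiconvex, but it does not affect the argument).
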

\begin{proof}
	Let $u \in \regC^{\infty}(\mathbb T^d;\sou)$ be $\Asp$-free with zero average. By \Cref{res:VAvalued} the field $u$ takes values in $\sou_\Asp$, so that $Qu \equiv 0$ and
	\[
	\int_{[0,1]^d} g\big(Q(\var + u(y))\big) \, dy = \int_{[0,1]^d} g(Q\var) \, dy = g(Q\var)
	\qquad \forall\, \var \in \sou .
	\]
	Thus $g \circ Q$ satisfies the defining inequality of \Cref{def:Aqc} with equality, and so does $-\,g\circ Q$; the identity between the recession functions is immediate from their definition. The last assertion follows by adding that equality to the inequality defining the $\Asp$-quasiconvexity of $f$, the space $\predual(\sou)$ being linear.
\end{proof}

The following argument is extracted from the proof of \cite[Thm.~1]{kristensenConcentrationEffectsBV2021}. We need a version that does not presuppose that $\Lambda_\Asp$ spans $\sou$. It turns out that no extra hypothesis is needed for this: the Jensen inequality \eqref{KR:hyp} \emph{by itself} forces $\nu$ and $\nu^{\infty}$ to be carried by the appropriate cosets of $\sou_\Asp$. This observation is due to Arroyo-Rabasa \cite[Cor.~4.1]{arroyo-rabasaCharacterizationGeneralizedYoung2021}; we reproduce it as Step~0 below, and it is what takes over the r\^ole of the spanning assumption.
\begin{lemma}[Extended Kristensen--\Raita decoupling]\label{res:KR}
	Let $\nu \in \prob(\sou)$ have finite first moment and let $\nu^{\infty} \in \prob(\sphere_{\sou})$. Suppose that there exists $\lambda > 0$ such that
	\begin{align}\label{KR:hyp}
		f(\bar \nu + \lambda \bar \nu^{\infty}) \le \dpr{f, \nu} + \lambda \dpr{f^{\infty}, \nu^{\infty}}
	\end{align}
	for any $f \in \predualAqc(\sou)$. Then
	\begin{align}\label{KR:statement}
		f(\var + \bar \nu^{\infty}) \le f(\var) + \dpr{f^{\infty}, \nu^{\infty}}
		\qquad \forall f \in \predualAqc(\sou) \text{ and } \var \in \sou.
	\end{align}
\end{lemma}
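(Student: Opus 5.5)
The plan has two stages. First, a \emph{Step~0} in the spirit of \cite[Cor.~4.1]{arroyo-rabasaCharacterizationGeneralizedYoung2021} locates the supports of $\nu$ and $\nu^\infty$. Second, a rescaling argument applies \eqref{KR:hyp} to dilated and translated copies of $f$ and strips away the oscillatory part carried by $\nu$.

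For Step~0, write $Q = P_{\sou_\Asp^\perp}$ and $c \coloneqq Q(\bar\nu + \lambda\bar\nu^\infty)$. By \Cref{res:transverse}, \eqref{KR:hyp} may be tested with both $g\circ Q$ and $-g\circ Q$, so it holds with equality:
\[
g(c) = \int g(QZ)\,d\nu(Z) + \lambda \int (g\circ Q)^\infty\,d\nu^\infty .
\]
First I take $g$ continuous with compact support (so $(g\circ Q)^\infty = 0$ and $g\circ Q\in\predual(\sou)$). This gives $Q_\#\nu = \delta_c$, and hence $c = Q\bar\nu$ and $\lambda Q\bar\nu^\infty = 0$. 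Next I take $g = |\cdot|$, suitably truncated near $0$ so as to stay in $\predual$, and compare with the previous identity. This yields $\int |Q\var|\,d\nu^\infty = 0$. Thus $\nu^\infty$ is carried by $\sou_\Asp\cap\sphere_\sou$, and $Z - \bar\nu \in \sou_\Asp$ for $\nu$-a.e.\ $Z$. This is exactly what lets \Cref{res:coneLip} apply to every displacement that appears below.

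Main step. Fix $f\in\predualAqc(\sou)$, $\var\in\sou$ and $s>0$, and set
\[
h(W) \coloneqq \tfrac{\lambda}{s}\, f\bigl(\var + \tfrac{s}{\lambda}(W-\bar\nu)\bigr).
\]
Since $\Asp$-quasiconvexity is invariant under affine changes of variable and positive rescaling, $h\in\predualAqc(\sou)$, with $h^\infty = f^\infty$. Applying \eqref{KR:hyp} to $h$ and dividing by $\lambda$ gives
\[
\frac{f(\var + s\bar\nu^\infty) - f(\var)}{s} \le \frac{1}{s}\int \Bigl[f\bigl(\var+\tfrac{s}{\lambda}(Z-\bar\nu)\bigr) - f(\var)\Bigr]d\nu(Z) + \dpr{f^\infty,\nu^\infty}.
\]
By Step~0 and \Cref{res:coneLip}, the oscillation term is at most $\frac{C}{\lambda}\int|Z-\bar\nu|\,d\nu$, uniformly in $s$ and $\var$. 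This already gives \eqref{KR:statement} up to an additive error $E_f = \frac{C_f}{\lambda}\int|Z-\bar\nu|\,d\nu$, valid for every base point.

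The main obstacle is removing $E_f$. Pure rescaling does not shrink it, because the Lipschitz constant of \Cref{res:coneLip} is dilation invariant. I will therefore follow the Kristensen--\Raita idea and amplify the concentration relative to the oscillation. I will show that the hypothesis \eqref{KR:hyp} self-improves to $(\nu, k\lambda, \nu^\infty)$ for every $k\in\mathbb N$. To do so, I apply it $k$ times to the functions
\[
W\mapsto \int f\bigl(W + (Z-\bar\nu)\bigr)\,d\nu(Z)\quad\text{replaced by their $\Asp$-quasiconvex envelopes }\QC,
\]
which lie below $f$-averages and are again in $\predualAqc$ with the same recession function. Alternatively, I would iterate the inequality above along $\var,\ \var+\bar\nu^\infty/k,\ \dots$ in steps of size $s=1/k$ and telescope, taking care to re-centre at each step so that the oscillation error is paid only once. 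Either way, the error becomes $E_f/k$, and letting $k\to\infty$ yields \eqref{KR:statement}. Should the telescoping fail to localize the error, the fallback is to first reduce to $\nu=\delta_{\bar\nu}$ by a translation and averaging argument (\Cref{res:coneLip} again) and then run the rescaling directly.
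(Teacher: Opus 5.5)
Your Step~0 coincides with the paper's. (No truncation of $|Q\,\cdot\,|$ is needed: it is continuous and positively $1$-homogeneous, hence already in $\predual(\sou)$.) Your inequality for $h$ is exactly the one the paper obtains by testing \eqref{KR:hyp} with $\phi_h$, its $h$ being your $s/\lambda$.

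\textbf{The gap.} The problem is how you then treat the oscillation term. Bounding it by $\frac{C}{\lambda}\int|Z-\bar\nu|\,d\nu$ discards the one fact that makes it vanish: $Z-\bar\nu$ has mean zero under $\nu$. None of your three remedies recovers this.
\begin{itemize}
\item Testing \eqref{KR:hyp} with $W\mapsto\int f(W+Z-\bar\nu)\,d\nu(Z)$ puts the recentred self-convolution $\nu*\nu*\delta_{-\bar\nu}$ on the right-hand side, not $\nu$. (That function is already $\Asp$-quasiconvex with recession function $f^\infty$, so no envelope is needed.) You cannot get back to $\nu$, because Jensen runs the wrong way: $\dpr{f,\nu*\nu*\delta_{-\bar\nu}}\ge\dpr{f,\nu}$ for convex $f$. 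The self-improvement to $(\nu,k\lambda,\nu^\infty)$ does hold, but only a posteriori, as a consequence of \eqref{KR:statement}.
\item The deterministic telescoping along $\var,\var+\bar\nu^\infty/k,\dots$, with the crude bound at each step, pays $k\cdot E_f/k=E_f$ in total.
\item The fallback ``reduce to $\nu=\delta_{\bar\nu}$'' is the statement to be proved.
\end{itemize}

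\textbf{How the paper removes $E_f$: linearization.} By \Cref{res:coneLip}, Rademacher's theorem on the cosets $\varalt+\sou_\Asp$, and Fubini, for a.e.\ $\varalt\in\sou$ the restriction $f|_{\varalt+\sou_\Asp}$ is differentiable at $\varalt$. At such a point, let $s\downarrow0$ in your inequality with base point $\varalt$.
\begin{itemize}
\item Your bound $C|Z-\bar\nu|/\lambda$ is exactly the domination needed. So the oscillation term tends to $\lambda^{-1}\dpr{\nabla_\Asp f(\varalt),\int(Z-\bar\nu)\,d\nu(Z)}=0$.
\item The left-hand side tends to $\dpr{\nabla_\Asp f(\varalt),\bar\nu^\infty}$, because $\bar\nu^\infty\in\sou_\Asp$.
\end{itemize}
This gives $\dpr{\nabla_\Asp f,\bar\nu^\infty}\le\dpr{f^\infty,\nu^\infty}$ a.e. Integrate along the Lipschitz maps $t\mapsto f(\var+t\bar\nu^\infty)$, $t\in[0,1]$, on a.e.\ line parallel to $\bar\nu^\infty$. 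Continuity of $f$ then yields \eqref{KR:statement} for every $\var$.

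\textbf{Repairing your telescoping.} It can be fixed without differentiation if the oscillatory displacements are \emph{accumulated} rather than bounded step by step. Take $Z_1,\dots,Z_k$ i.i.d.\ with law $\nu$, and set $R_j\coloneqq\frac{1}{k\lambda}\sum_{i\le j}(Z_i-\bar\nu)$. Apply your inequality (multiplied by $s=1/k$) at the random base points $\var+\frac{k-j-1}{k}\bar\nu^\infty+R_j$, $j=0,\dots,k-1$. Summing the expectations gives $f(\var+\bar\nu^\infty)\le\mathbb E\,f(\var+R_k)+\dpr{f^\infty,\nu^\infty}$. Then \Cref{res:coneLip} and the $L^1$ law of large numbers give $\mathbb E|R_k|\to0$. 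This is also what your first remedy yields once the convolution powers $\nu^{*k}$ are kept. The error decays like an empirical mean (e.g.\ as $k^{-1/2}$ under a second moment), not as $E_f/k$. In both routes it is the centring, not the rescaling, that removes $E_f$.
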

\begin{proof}
	Throughout, $C$ denotes the constant of \eqref{eq:coneLip}.
	\emph{Step 0: the support constraints are automatic.} We claim that \eqref{KR:hyp} alone implies
	\begin{align}\label{KR:supp}
		\mathrm{spt}\, \nu \subset \bar\nu + \sou_\Asp
		\qquad \text{and} \qquad
		\mathrm{spt}\, \nu^{\infty} \subset \sou_\Asp .
	\end{align}
	Write $Q \coloneqq P_{\sou_\Asp^{\perp}}$ and $\pol \coloneqq \bar\nu + \lambda\bar\nu^{\infty}$. Let first $g \in \regC_c(\sou_\Asp^{\perp})$. Then $g \circ Q$ is bounded and continuous, hence lies in $\predual(\sou)$ with $(g \circ Q)^{\infty} \equiv 0$, and \Cref{res:transverse} puts both $\pm\, g\circ Q$ in $\predualAqc(\sou)$. Applying \eqref{KR:hyp} to each sign turns the inequality into an equality,
	\[
	g(Q\pol) \ =\ \dpr{g \circ Q, \nu} \ =\ \dpr{g, Q_{\#}\nu}
	\qquad \forall\, g \in \regC_c(\sou_\Asp^{\perp}),
	\]
	so that $Q_{\#}\nu = \delta_{Q\pol}$, because $\regC_c(\sou_\Asp^{\perp})$ separates the probability measures on $\sou_\Asp^{\perp}$. Hence $Q\var = Q\pol$ for $\nu$-almost every $\var$, that is, $\mathrm{spt}\,\nu \subset \pol + \sou_\Asp$; integrating this identity against $\nu$ gives $Q\bar\nu = Q\pol$, whence $\lambda\, Q\bar\nu^{\infty} = 0$ and, $\lambda$ being positive, $\pol + \sou_\Asp = \bar\nu + \sou_\Asp$. This is the first constraint.
	Take now $g \coloneqq |\,\cdot\,|$ on $\sou_\Asp^{\perp}$, so that $g \circ Q = |Q(\,\cdot\,)|$ is continuous and positively $1$-homogeneous; it therefore lies in $\predual(\sou)$ with $(g\circ Q)^{\infty} = g \circ Q$, and again $\pm\, g \circ Q \in \predualAqc(\sou)$. Both signs of \eqref{KR:hyp} now give
	\[
	|Q\pol| \ =\ \int_{\sou} |Q\var| \, d\nu(\var) \ +\ \lambda \int_{\sphere_{\sou}} |Q\var| \, d\nu^{\infty}(\var),
	\]
	all three terms being finite because $\nu$ has finite first moment. By the first constraint the first integral on the right equals $|Q\pol|$, so the second vanishes; as $\lambda > 0$, this is the second constraint. In particular $\sou_\Asp \cap \sphere_{\sou} \neq \emptyset$, so that $\Lambda_\Asp \neq \{0\}$.
	\emph{Step 1: Reduction to a centred measure carried by $\sou_\Asp$.}
	$\Asp$-quasiconvexity, membership in $\predual(\sou)$ and the recession function are all unaffected by a translation of the argument: if $f \in \predualAqc(\sou)$ then $f(\bar\nu + \,\cdot\,) \in \predualAqc(\sou)$ and $f(\bar\nu + \,\cdot\,)^{\infty} = f^{\infty}$. Replacing simultaneously $f$ by $f(\bar\nu + \,\cdot\,)$ and $\nu$ by $(\,\cdot - \bar\nu)_{\#}\nu$ therefore leaves \eqref{KR:hyp} unchanged, while the conclusion \eqref{KR:statement} does not involve $\nu$ at all. In view of \eqref{KR:supp} we may hence assume that
	\[
	\bar\nu = 0 \qquad \text{and} \qquad \mathrm{spt}\,\nu \subset \sou_\Asp .
	\]
	\emph{Step 2: Differentiation.}
	By \eqref{eq:coneLip} the restriction of $f$ to any coset $\varalt + \sou_\Asp$ is Lipschitz with constant $C$. Rademacher's theorem, applied on $\sou_\Asp$ and combined with Fubini's theorem in the splitting $\sou = \sou_\Asp^{\perp} \oplus \sou_\Asp$, therefore shows that $G \coloneqq \big\{ \varalt \in \sou \st f|_{\varalt + \sou_\Asp} \text{ is differentiable at } \varalt \big\}$ has full $\mathscr L^{N}$-measure in $\sou$. For $\varalt \in G$ let $\nabla_\Asp f(\varalt)$ denote the differential of that restriction, so that
	\[
	\lim_{h \to 0^+} \frac{f(\varalt + h\var) - f(\varalt)}{h} = \dpr{\nabla_\Asp f(\varalt), \var}
	\qquad \text{for every } \var \in \sou_\Asp .
	\]
	Fix $\varalt \in G$ and $h > 0$, and set $\phi_h(\var) \coloneqq h^{-1}\big(f(\varalt + h\var) - f(\varalt)\big)$. Then $\phi_h \in \predualAqc(\sou)$, since $\Asp$-quasiconvexity is preserved by the affine change of variables $\var \mapsto \varalt + h\var$ and by subtraction of a constant, and $\phi_h^{\infty} = f^{\infty}$. Applying \eqref{KR:hyp} to $\phi_h$ and recalling $\bar\nu = 0$,
	\begin{align*}
		\frac{f(\varalt + h\lambda\bar\nu^{\infty}) - f(\varalt)}{h}
		\ \le\ \int_{\sou} \frac{f(\varalt + h\var) - f(\varalt)}{h} \, d\nu(\var)
		+ \lambda \dpr{f^{\infty}, \nu^{\infty}} .
	\end{align*}
	Both $\bar\nu^{\infty}$ and $\nu$-almost every $\var$ lie in $\sou_\Asp$, so \eqref{eq:coneLip} bounds the integrand by $C|\var| \in L^1(\nu)$, while it converges pointwise to $\dpr{\nabla_\Asp f(\varalt),\var}$. Dominated convergence therefore yields
	\[
	\lambda \dpr{\nabla_\Asp f(\varalt), \bar\nu^{\infty}}
	\ \le\ \dpr{\nabla_\Asp f(\varalt), \bar\nu} + \lambda\dpr{f^{\infty},\nu^{\infty}}
	\ =\ \lambda \dpr{f^{\infty}, \nu^{\infty}},
	\]
	and after dividing by $\lambda > 0$,
	\begin{align}\label{proof:KR:ineq}
		\dpr{\nabla_\Asp f(\varalt), \bar\nu^{\infty}} \leqslant \dpr{f^{\infty}, \nu^{\infty}}
		\qquad \text{for every } \varalt \in G .
	\end{align}
	\emph{Step 3: Integration along $\bar\nu^{\infty}$.}
	If $\bar\nu^{\infty} = 0$ then \eqref{KR:statement} is trivial, so assume $\bar\nu^{\infty} \neq 0$. By the second constraint in \eqref{KR:supp}, $\bar\nu^{\infty} \in \sou_\Asp$. Split $\sou = \R\bar\nu^{\infty} \oplus H$. Since $\sou \mysm G$ is Lebesgue-null, Fubini's theorem provides $H_0 \subset H$ of full measure in $H$ such that, for every $y \in H_0$, the line $y + \R\bar\nu^{\infty}$ meets $\sou \mysm G$ in an $\mathscr L^1$-null set. Fix such a $y$ and let $\var \in y + \R\bar\nu^{\infty}$. By \eqref{eq:coneLip} the map $t \mapsto f(\var + t\bar\nu^{\infty})$ is Lipschitz on $[0,1]$, hence absolutely continuous, and for $\mathscr L^1$-a.e.\ $t$ its derivative equals $\dpr{\nabla_\Asp f(\var + t\bar\nu^{\infty}), \bar\nu^{\infty}}$. By \eqref{proof:KR:ineq},
	\begin{align*}
		f(\var + \bar\nu^{\infty}) - f(\var)
		= \int_{0}^{1} \dpr{\nabla_\Asp f(\var + t\bar\nu^{\infty}), \bar\nu^{\infty}} \, dt
		\le \dpr{f^{\infty}, \nu^{\infty}} .
	\end{align*}
	This holds for every $\var$ in $H_0 + \R\bar\nu^{\infty}$, a set of full measure in $\sou$. Finally, since $f$ is continuous, the inequality extends to every $\var \in \sou$.
\end{proof}

It is convenient to give a name to the measures singled out by \eqref{KR:statement}.
\begin{definition}\label{def:micro}
	We write $\Env_\Asp$ for the set of \emph{admissible concentration profiles}, that is, the set of $\nu^{\infty} \in \prob(\sphere_{\sou})$ such that $\mathrm{spt}\,\nu^{\infty} \subset \sou_\Asp$ and
	\begin{align}\label{eq:micro}
		f(\var + \bar\nu^{\infty}) \le f(\var) + \dpr{f^{\infty},\nu^{\infty}}
		\qquad \forall f \in \predualAqc(\sou), \ \forall \var \in \sou .
	\end{align}
\end{definition}
\begin{corollary}\label{res:convexity}
	$\Env_\Asp$ is convex and weak-$*$ compact.
\end{corollary}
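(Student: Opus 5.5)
Compactness and convexity will be handled separately, both straight from \Cref{def:micro}; no Young-measure machinery is needed.

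\emph{Compactness.} Since $\prob(\sphere_{\sou})$ is weak-$*$ compact (the sphere is compact metrizable), it suffices to show that $\Env_\Asp$ is weak-$*$ closed. Take $\nu^\infty_n \in \Env_\Asp$ with $\nu^\infty_n \toweakstar \nu^\infty$.
\begin{itemize}
	\item The set $\sphere_{\sou}\cap\sou_\Asp$ is closed, so by the portmanteau theorem the support constraint $\mathrm{spt}\,\nu^\infty\subset\sou_\Asp$ passes to the limit.
	\item The barycenters converge: $\bar\nu^\infty_n \to \bar\nu^\infty$, because the identity is continuous on $\sphere_{\sou}$.
	\item For fixed $f\in\predualAqc(\sou)$, the recession function $f^\infty$ is continuous on $\sphere_{\sou}$, being the boundary trace of $\hat f$. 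Hence $\dpr{f^\infty,\nu^\infty_n}\to\dpr{f^\infty,\nu^\infty}$.
	\item By continuity of $f$, $f(\var+\bar\nu^\infty_n)\to f(\var+\bar\nu^\infty)$.
\end{itemize}
Passing to the limit in \eqref{eq:micro} then gives $\nu^\infty\in\Env_\Asp$.

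\emph{Convexity.} Let $\nu_1,\nu_2\in\Env_\Asp$, $t\in[0,1]$, and set $\nu^\infty=t\nu_1+(1-t)\nu_2$. Its support lies in $\sou_\Asp$, and both the barycenter and the pairing with $f^\infty$ are affine:
\[
\bar\nu^\infty = tb_1+(1-t)b_2, \qquad \dpr{f^\infty,\nu^\infty} = t\dpr{f^\infty,\nu_1}+(1-t)\dpr{f^\infty,\nu_2},
\]
where $b_i=\bar\nu_i$. Inequality \eqref{eq:micro} holds at every base point $\var$, so it can be iterated.

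For rational $t=p/q$, decompose $\bar\nu^\infty = \frac1q\big(p\,b_1+(q-p)\,b_2\big)$. The plan is to show
\[
f(\var+\bar\nu^\infty)\le f(\var)+t\dpr{f^\infty,\nu_1}+(1-t)\dpr{f^\infty,\nu_2}.
\]
\begin{enumerate}
	\item \emph{Rescaling.} For $s>0$ put $f_s(\var)=s f(\var/s)$. It is $\Asp$-quasiconvex (a linear change of variables preserves the defining inequality), lies in $\predual(\sou)$, and has $f_s^\infty=f^\infty$. Applying \eqref{eq:micro} to $f_q$ at base point $q\var'$ gives
	\[
	f(\var'+b_i/q)\le f(\var')+\tfrac1q\dpr{f^\infty,\nu_i}\qquad\text{for all }\var'.
	\]
	\item \emph{Chaining.} Apply this $p$ times with $b_1$ and $q-p$ times with $b_2$, moving the base point each time. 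This yields the displayed inequality for $t=p/q$.
	\item \emph{Irrational $t$.} Approximate $t$ by rationals $t_n\to t$. Then $t_n\nu_1+(1-t_n)\nu_2\toweakstar\nu^\infty$, and the closedness shown above gives $\nu^\infty\in\Env_\Asp$.
\end{enumerate}

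The main obstacle is the rescaling step: one must check carefully that $f_s$ really belongs to $\predualAqc(\sou)$ with the same recession function, and that \eqref{eq:micro} is available at every base point. The latter is exactly what \Cref{res:KR} supplied, and it is what makes the iteration possible.
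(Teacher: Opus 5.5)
Your argument is correct. The compactness half coincides with the paper's; the convexity half takes a genuinely different route.

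The paper linearizes the constraint. It evaluates \eqref{eq:micro} at $\var=0$ and reruns Step~2 of the proof of \Cref{res:KR} on the triples $(\delta_0,1,\nu^\infty_i)$. This gives $\dpr{\nabla_\Asp f(\varalt),\bar\nu^\infty_i}\le\dpr{f^\infty,\nu^\infty_i}$ for $\varalt$ in a full-measure set $G$ of points where $f$ is differentiable along $\sou_\Asp$. Since this inequality is affine in $\nu^\infty$, it passes to every convex combination at once, and Step~3 integrates it back along $\bar\nu^\infty_t$. That route needs the Lipschitz bound of \Cref{res:coneLip}, Rademacher and Fubini, and it uses $\bar\nu^\infty_t\in\sou_\Asp$.

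Your route uses only two symmetries of the constraint: freedom of the base point, and the dilations $f\mapsto f_s=sf(\cdot/s)$, which preserve $\predualAqc(\sou)$ and fix $f^\infty$. From these you produce the $1/q$-step inequality and telescope it. The step you single out as the main obstacle is sound. The field $u/s$ is again an admissible periodic field, and $f_s(\varalt)=(s+|\varalt|)\,\hat f\big(\varalt/(s+|\varalt|)\big)$ shows that $\widehat{f_s}$ extends continuously to $\overline{\ball}_{\sou}$ with the same trace on $\sphere_{\sou}$.

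Your proof is therefore purely algebraic. It needs no differentiability of $f$ and no support information for the convexity itself, and it would apply verbatim to any dilation-invariant class of test integrands. Its only cost is that irrational weights must be reached by approximation, and that is paid by the closedness you had already established. The paper's linearization handles all $t\in[0,1]$ directly and recycles machinery it needs anyway for \Cref{res:KR}.
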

\begin{proof}
	Since $f$, $f^{\infty}$ and $\nu^{\infty} \mapsto \bar\nu^{\infty}$ are weak-$*$ continuous, and since $\{\nu^{\infty} : \mathrm{spt}\,\nu^{\infty} \subset \sou_\Asp\}$ is weak-$*$ closed, $\Env_\Asp$ is a weak-$*$ closed subset of $\prob(\sphere_{\sou})$, hence weak-$*$ compact.
	For convexity, let $\nu^{\infty}_0,\nu^{\infty}_1 \in \Env_\Asp$, let $t \in [0,1]$ and set $\nu^{\infty}_t \coloneqq (1-t)\nu^{\infty}_0 + t\nu^{\infty}_1$; its support is contained in $\sou_\Asp$. Fix $f \in \predualAqc(\sou)$ and let $G$ be the set of the proof of \Cref{res:KR}, which depends on $f$ alone. For $i \in \{0,1\}$, evaluating \eqref{eq:micro} at $\var = 0$ shows that the triple $(\delta_0, 1, \nu^{\infty}_i)$ satisfies \eqref{KR:hyp}, while $\bar\delta_0 = 0$ and $\mathrm{spt}\,\delta_0 = \{0\} \subset \sou_\Asp$; Step~2 of the proof of \Cref{res:KR} therefore gives
	\[
	\dpr{\nabla_\Asp f(\varalt), \bar\nu^{\infty}_i} \le \dpr{f^{\infty},\nu^{\infty}_i}
	\qquad \text{for every } \varalt \in G .
	\]
	Taking the convex combination with weight $t$ yields $\dpr{\nabla_\Asp f(\varalt), \bar\nu^{\infty}_t} \le \dpr{f^{\infty},\nu^{\infty}_t}$ on $G$, which is \eqref{proof:KR:ineq} for $\nu^{\infty}_t$. Step~3 of the same proof then gives \eqref{eq:micro} for $\nu^{\infty}_t$, so $\nu^{\infty}_t \in \Env_\Asp$.
\end{proof}
%%  2.3  Young measures  -----------------------------------------------------
\subsection{Young measures}\label{sec:YM}
We will make use of the well-established theory of $\Asp$-free generalized Young measures in the proof of our main result; more precisely, we rely on the Jensen inequalities that arise as necessary conditions for the lower semicontinuity of integrals defined on $\Asp$-free sequences. While our primary focus is on compensated compactness, this measure-theoretic framework provides the precise duality pairing needed to exploit $\Asp$-quasiconvexity.
\begin{definition}[{Generalized Young measures \cite{dipernaOscillationsConcentrationsWeak1987,alibertNonUniformIntegrabilityGeneralized1997}}]\label{def:GYM}
	Let $\Omega \subset \mathbb{R}^d$ be open and let $\sou$ be a finite-dimensional inner product space.
	A generalized Young measure on $\Omega$ with values in $\sou$ is a triple $\boldsymbol{\nu} \coloneqq (\nu,\lambda,\nu^\infty)$, consisting of:
	\begin{enumerate}[label={(\roman*)},itemsep=0pt]
		\item a weak-$*$ measurable family $\nu : x \in \Omega \mapsto \nu_x$ of probability measures on $\sou$,
		\item a non-negative measure $\lambda \in \Meas_+(\overline{\Omega})$,
		\item a weak-$*$ $\lambda$-measurable family $\nu^{\infty} : x \in \Omega \mapsto \nu_x^\infty$ of probability measures over $\sphere_{\sou}$.
	\end{enumerate}
	The collection of such triples $\boldsymbol{\nu}$ satisfying $x \mapsto \int |\var| \, d\nu_x(\var) \in L^1(\Omega)$ and $\lambda(\Omega) < \infty$ is denoted by $\mathbf{Y}(\Omega,\sou)$.
\end{definition}
A Young measure $\boldsymbol{\nu}$ is said to be \emph{homogeneous} if both parameterized families of measures $x \mapsto \nu_x$ and $x \mapsto \nu_x^{\infty}$ are constant. The space of generalized Young measures $\mathbf{Y}(\Omega,\sou)$ can be identified as a subset of the dual of $\predual(\Omega,\sou)$ (defined in \Cref{sec:Aqcf}) via the duality product:
\begin{align*}
	\dpr{f, \boldsymbol{\nu}}
	\coloneqq \int_{\Omega} \dpr{f(x,\cdot), \nu_x} dx + \int_{\overline \Omega} \dpr{f^{\infty}(x,\cdot), \nu_x^{\infty}} d\lambda(x).
\end{align*}
Accordingly, we define the sequential weak-$*$ convergence $\boldsymbol \nu_j \toweakstar \boldsymbol{\nu}$ in $\mathbf Y(\Omega,\sou)$ by the condition that $\dpr{f, \boldsymbol{\nu_j}} \to \dpr{f, \boldsymbol{\nu}}$ for all $f \in \predual(\Omega,\sou)$.
For a given Young measure, we recall that the macroscopic averages, or \emph{barycenters}, of its oscillation and concentration components are respectively defined as
\[
\bar \nu_x \coloneqq \int_{\sou} \var \, d\nu_x(\var)
\qquad \text{and} \qquad
\bar \nu_x^{\infty} \coloneqq \int_{\sphere_{\sou}} \var \, d\nu_x^{\infty}(\var).
\]
We are particularly concerned with Young measures arising from sequences that satisfy the PDE constraint $\Asp u = 0$.
\begin{definition}[$\Asp$-free generalized Young measures]\label{def:A_free_GYM}
	An \emph{$\Asp$-free generalized Young measure} on $\Omega$ is a generalized Young measure $\boldsymbol{\nu} \in \mathbf Y(\Omega,\sou)$ for which there exists a sequence of vector-valued measures $(\mu_j)_j \subset \Meas(\Omega,\sou)$ such that
	\begin{align*}
		\Asp \mu_j = 0 \quad \text{in the sense of distributions on } \Omega,
	\end{align*}
	and for any $f \in \predual(\Omega,\sou)$, there holds
	\begin{align*}
		\int_{\Omega} f(x, \mu_j^{a}(x)) \, dx + \int_{\Omega} f^{\infty}\Big(x,\frac{\mu_j}{|\mu_j|}(x)\Big) \, d|\mu_j|^s
		\underset{j \to \infty}{\longrightarrow}
		\dpr{f, \boldsymbol{\nu}}.
	\end{align*}
	The set of all such $\Asp$-free Young measures is denoted by $\mathbf Y_\Asp(\Omega)$.
\end{definition}
A fundamental property of any $\boldsymbol{\nu} \in \mathbf Y_\Asp(\Omega)$ is that it satisfies the macroscopic barycenter constraint
\begin{align}\label{charGAFYM:bary}
	\Asp \bar{\boldsymbol{\nu}} = 0 \qquad \text{in the sense of distributions on } \Omega,
\end{align}
where the vector-valued measure $\bar{\boldsymbol{\nu}}$ is defined by $\bar{\boldsymbol{\nu}} \coloneqq \bar \nu_x \, dx + \bar{\nu}^\infty_x \, \lambda(dx)$. This is a necessary condition, as the global barycenter captures the weak-$*$ measure limit of the generating sequence. Throughout, $\bar{\boldsymbol{\nu}}^{ac}$ denotes the density of the absolutely continuous part of $\bar{\boldsymbol{\nu}}$ with respect to $\mathscr L^d$, that is, $\bar{\boldsymbol{\nu}}^{ac}(x) = \bar\nu_x + \lambda^{a}(x)\, \bar\nu^{\infty}_x$ for $\mathscr L^d$-almost every $x$.
Any $\Asp$-free Young measure $\boldsymbol{\nu} \in \mathbf Y_\Asp(\Omega)$ furthermore satisfies the algebraic support constraint
\begin{align}\label{eq:support}
	\mathrm{spt}\, \nu_x^\infty \subset \sou_\Asp \quad \lambda\text{-a.e.},
\end{align}
together with the localized Jensen-type inequality
\begin{align}\label{charGAFYM:jensen}
	f\big( \bar{\boldsymbol{\nu}}^{ac}(x) \big)
	\le \dpr{f, \nu_x} + \lambda^{a}(x)\, \dpr{f^{\infty}, \nu_x^\infty} \qquad \mathscr L^d\text{-a.e.}
\end{align}
for all $f \in \predualAqc(\sou)$; see \cite[Thm.~1.1, Rmk.~1.2 and Cor.~4.1]{arroyo-rabasaCharacterizationGeneralizedYoung2021} and \cite{arroyo-rabasaLowerSemicontinuityRelaxation2020}.
\begin{remark}[The hypotheses of \Cref{res:KR}]\label{rem:KRsupp}
	At $\mathscr L^d$-almost every $x$ with $\lambda^{a}(x) > 0$, the inequality \eqref{charGAFYM:jensen} is exactly \eqref{KR:hyp} for the triple $(\nu_x, \lambda^{a}(x), \nu^{\infty}_x)$, which has finite first moment. Nothing else has to be checked --- in particular not \eqref{eq:support}, since by Step~0 of the proof of \Cref{res:KR} the support constraints are themselves a consequence of \eqref{charGAFYM:jensen}. \Cref{res:KR} therefore applies at every such point and yields $\nu^{\infty}_x \in \Env_\Asp$.
\end{remark}
In fact, it was shown in \cite{arroyo-rabasaCharacterizationGeneralizedYoung2021} (see also \cite{kristensenOscillationConcentrationSequences2022}) that conditions \eqref{charGAFYM:bary}--\eqref{charGAFYM:jensen} are also sufficient to fully characterize $\Asp$-free Young measures among all elements of $\mathbf{Y}(\Omega,\sou)$. Combined with \Cref{res:KR}, this identifies $\Env_\Asp$ exactly:
\begin{proposition}[Concentration profiles]\label{prop:micro_Young}
	Let $\Asp$ satisfy the constant-rank \Cref{ass:constant_rank} and let $\nu^{\infty} \in \prob(\sphere_{\sou})$. The following are equivalent:
	\begin{enumerate}[label={(\roman*)}]
		\item\label{item:micro1} $\boldsymbol{\nu} = (\nu,\lambda,\nu^{\infty}) \in \mathbf Y_\Asp(\Omega)$ is a homogeneous Young measure for some $\lambda \in \Meas_+(\overline\Omega)$ with $\lambda(\Omega) > 0$;
		\item\label{item:micro2} $\nu^{\infty} \in \Env_\Asp$.
	\end{enumerate}
\end{proposition}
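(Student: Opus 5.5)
The two implications are proved separately. For \ref{item:micro1}$\Rightarrow$\ref{item:micro2}, I start from the homogeneous $\boldsymbol\nu = (\nu,\lambda,\nu^\infty)\in\mathbf Y_\Asp(\Omega)$. Homogeneity says the family $x\mapsto\nu^\infty_x$ is the constant $\nu^\infty$, but the Jensen inequality \eqref{charGAFYM:jensen} only controls points where $\lambda^a(x)>0$, and $\lambda$ may well be purely singular. So I first blow up at a $\lambda$-generic point. Take $x_0\in\Omega$ in the support of $\lambda$ which is a Lebesgue point for the relevant densities, and rescale the generating sequences $\mu_j$ around $x_0$. Tangent measures of $\lambda$ at $\lambda$-a.e.\ point include (after a diagonal argument in $j$ and $r$) a nonzero tangent measure. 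Since the angular profile is constant, the blow-up Young measure still has concentration part $\nu^\infty$.

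To get Lebesgue mass in the limit, I use a cleaner route: translate-and-average, i.e.\ convolve the Young measure in $x$. Averaging translates of $\mathscr L^d$-a.e.\ rescaled copies yields a homogeneous $\Asp$-free Young measure whose $\lambda$ is a multiple of Lebesgue measure. Concretely, $\mathbf Y_\Asp$ is convex and closed under translations on the torus, and averages preserve constant $\nu^\infty$. Then \eqref{charGAFYM:jensen} holds with $\lambda^a>0$ a.e., and \Cref{rem:KRsupp} together with \Cref{res:KR} give $\nu^\infty\in\Env_\Asp$. The main obstacle is justifying that this averaging and blow-up stay inside $\mathbf Y_\Asp$; I would lean on the characterization \eqref{charGAFYM:bary}--\eqref{charGAFYM:jensen}, which is stable under such operations for homogeneous oscillation parts, rather than manipulate sequences directly.

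For \ref{item:micro2}$\Rightarrow$\ref{item:micro1}, given $\nu^\infty\in\Env_\Asp$ I build the explicit triple
\[
\boldsymbol\nu=(\delta_0,\ \mathscr L^d\resmes\Omega,\ \nu^\infty),
\]
using $\lambda=\mathscr L^d$ on bounded $\Omega$; for unbounded $\Omega$, a constant multiple on a bounded subdomain suffices, or I restrict to $\Omega$ with finite measure. I then verify the sufficient conditions of \cite{arroyo-rabasaCharacterizationGeneralizedYoung2021}:
\begin{itemize}
\item The barycenter $\bar{\boldsymbol\nu}=\bar\nu^\infty\,\mathscr L^d$ is constant, hence $\Asp$-free.
\item The support constraint \eqref{eq:support} is part of the definition of $\Env_\Asp$.
\item The Jensen inequality \eqref{charGAFYM:jensen} reads $f(\bar\nu^\infty)\le f(0)+\dpr{f^\infty,\nu^\infty}$, which is \eqref{eq:micro} at $\var=0$.
\end{itemize}
Homogeneity and $\lambda(\Omega)>0$ are immediate.
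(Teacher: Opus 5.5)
Your converse \ref{item:micro2}$\Rightarrow$\ref{item:micro1} is the paper's argument. One caveat: $\lambda=c\,\mathscr L^d\resmes B$ on a subdomain $B\subset\Omega$ is not admissible, because $\bar\nu^{\infty}\,\mathscr L^d\resmes B$ is in general not $\Asp$-free across $\partial B$. You need $\lambda=\mathscr L^d\resmes\Omega$ with $\mathscr L^d(\Omega)<\infty$, as the paper tacitly assumes.

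The genuine gap is in \ref{item:micro1}$\Rightarrow$\ref{item:micro2} when $\lambda^{a}\equiv 0$. Your translate-and-average step is not justified.
\begin{itemize}
\item $\mathbf Y_\Asp(\Omega)$ is not convex. Already for $\Asp=\operatorname{curl}$, $\delta_A$ and $\delta_B$ are generated by constant gradients, but $\tfrac12(\delta_A+\delta_B)$ is not a gradient Young measure when $\rank(A-B)\ge 2$.
\item Your translated triples $(\nu,\lambda_h,\nu^\infty)$, with $\lambda_h$ the translate of $\lambda$ by $h$, do not share a barycenter measure $\bar\nu\,\mathscr L^d+\bar\nu^{\infty}\lambda_h$. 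So even the classical fixed-barycenter convexity does not apply.
\item Falling back on the characterization is circular. For the averaged triple $(\nu,\lambda*\rho,\nu^{\infty})$, condition \eqref{charGAFYM:jensen} demands
\[
f\big(\bar\nu + c\,\bar\nu^{\infty}\big) \le \dpr{f,\nu} + c\,\dpr{f^{\infty},\nu^{\infty}} \qquad \text{for } c=(\lambda*\rho)(x)>0 .
\]
For the original triple with $\lambda^a\equiv 0$, however, \eqref{charGAFYM:jensen} only says $f(\bar\nu)\le\dpr{f,\nu}$ and carries no information on $\nu^\infty$. Given that, the displayed inequality is equivalent to \eqref{eq:micro} (one direction by \Cref{res:KR}), which is exactly what you are trying to prove.
\item The preliminary blow-up does not help either: tangent measures of a purely singular $\lambda$ may all be singular, e.g.\ for $\lambda=\mathcal H^{d-1}\resmes\{x_1=0\}$.
\end{itemize}

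The missing ingredient is \Cref{thm:DPR} applied to the barycenter. When $\lambda^a\equiv 0$, the only constraint that sees $\nu^\infty$ is \eqref{charGAFYM:bary}. The singular part of the $\Asp$-free measure $\bar{\boldsymbol\nu}$ is $\bar\nu^{\infty}\lambda$, whose polar is constant, so $\bar\nu^{\infty}\in\Lambda_\Asp$ (trivially if $\bar\nu^{\infty}=0$). Then $t\mapsto f(\var+t\bar\nu^{\infty})$ is convex for every $f\in\predualAqc(\sou)$, and monotone difference quotients give $f(\var+\bar\nu^{\infty})-f(\var)\le f^{\infty}(\bar\nu^{\infty})$. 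Integrating against $\nu^\infty$ a subgradient of $f^\infty$ at $\bar\nu^{\infty}\in\Lambda_\Asp$ (Kirchheim--Kristensen) gives $f^{\infty}(\bar\nu^{\infty})\le\dpr{f^{\infty},\nu^{\infty}}$. Together these give \eqref{eq:micro} directly, with no blow-up or averaging. Indeed, it is only through this argument that the Jensen inequality for your averaged triple could be verified a posteriori.

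When $\lambda^a\not\equiv 0$, your final step is the paper's. Pick $x_0$ with $\lambda^a(x_0)>0$ outside a single null set off which \eqref{charGAFYM:jensen} holds for all $f$ simultaneously; this uses separability of $\predualAqc(\sou)$. Then apply \Cref{res:KR}.
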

\begin{proof}
	Assume \ref{item:micro1}. The support constraint $\mathrm{spt}\,\nu^{\infty} \subset \sou_\Asp$ is part of \eqref{eq:support}, so only \eqref{eq:micro} has to be established. We distinguish two cases according to whether the absolutely continuous part of $\lambda$ vanishes.
	\emph{Case 1: $\lambda^{a} \equiv 0$.} Then $\lambda = \lambda^{s}$, and since $\nu^{\infty}$ is constant in $x$ the barycenter measure of \eqref{charGAFYM:bary} decomposes as
	\[
	\bar{\boldsymbol{\nu}} = \bar\nu \, \mathscr L^d + \bar\nu^{\infty} \lambda,
	\qquad \text{with} \qquad
	\bar{\boldsymbol{\nu}}^{s} = \bar\nu^{\infty}\lambda .
	\]
	We claim that $\bar\nu^{\infty} \in \Lambda_\Asp$. This is trivial if $\bar\nu^{\infty} = 0$, because $0 \in \ker \Afv(\xi) \subset \Lambda_\Asp$. If $\bar\nu^{\infty} \neq 0$, then $|\bar{\boldsymbol{\nu}}|^{s} = |\bar\nu^{\infty}|\lambda$ is non-trivial on $\Omega$ and its polar is the constant vector $\bar\nu^{\infty}/|\bar\nu^{\infty}|$; as $\bar{\boldsymbol{\nu}}$ is $\Asp$-free by \eqref{charGAFYM:bary}, \Cref{thm:DPR} forces $\bar\nu^{\infty}/|\bar\nu^{\infty}| \in \Lambda_\Asp$, and the claim follows because $\Lambda_\Asp$ is a cone.
	Fix now $f \in \predualAqc(\sou)$ and $\var \in \sou$. Since $f$ is $\Lambda_\Asp$-convex \cite[Proposition~3.4]{fonsecaAQuasiconvexityLowerSemicontinuity1999} and $\bar\nu^{\infty} \in \Lambda_\Asp$, the function $t \mapsto f(\var + t\bar\nu^{\infty})$ is convex; its difference quotients are therefore non-decreasing and
	\begin{align}\label{eq:micro:subadd}
		f(\var + \bar\nu^{\infty}) - f(\var)
		\ \le\ \lim_{t \to \infty} \frac{f(\var + t \bar\nu^{\infty}) - f(\var)}{t}
		\ =\ f^{\infty}(\bar\nu^{\infty}).
	\end{align}
	On the other hand, by \cite[Theorem~1.1]{kirchheimRankOneConvex2016} the positively $1$-homogeneous function $f^{\infty}$ has a non-empty convex subdifferential at every point of $\Lambda_\Asp$; picking $\zeta$ in the subdifferential at $\bar\nu^{\infty}$ and integrating
	\[
	f^{\infty}(\varalt) \ \ge\ f^{\infty}(\bar\nu^{\infty}) + \dpr{\zeta, \varalt - \bar\nu^{\infty}}
	\qquad (\varalt \in \sou)
	\]
	against $\nu^{\infty}$ gives $\dpr{f^{\infty},\nu^{\infty}} \ge f^{\infty}(\bar\nu^{\infty})$. Combining this with \eqref{eq:micro:subadd} yields \eqref{eq:micro}, and hence $\nu^{\infty} \in \Env_\Asp$.
	\emph{Case 2: $\lambda^{a} \not\equiv 0$.} Since $\predual(\sou)$ is separable, so is $\predualAqc(\sou)$; let $D$ be a countable dense subset of it. Both sides of \eqref{charGAFYM:jensen} depend continuously on $f$ with respect to $\|\cdot\| = \|\hat{\,\cdot\,}\|_{\infty}$, because $|g(\varalt)| \le \|g\|(1+|\varalt|)$ for every $g \in \predual(\sou)$, because $\nu$ has finite first moment, and because $g^{\infty}$ is the restriction of $\hat g$ to $\sphere_{\sou}$, so that $\|g^{\infty}\|_{\infty} \le \|g\|$. Discarding the countably many null sets attached to the elements of $D$ and passing to the limit, we find a single $\mathscr L^d$-null set outside of which \eqref{charGAFYM:jensen} holds simultaneously for every $f \in \predualAqc(\sou)$. As $\mathscr L^d(\{\lambda^{a} > 0\}) > 0$, we may pick a point $x_0$ outside that null set with $\lambda_0 \coloneqq \lambda^{a}(x_0) > 0$. Because $\nu$ and $\nu^{\infty}$ do not depend on $x$, \eqref{charGAFYM:jensen} at $x_0$ reads
	\[
	f(\bar\nu + \lambda_0 \bar\nu^{\infty}) \le \dpr{f,\nu} + \lambda_0 \dpr{f^{\infty},\nu^{\infty}}
	\qquad \text{for all } f \in \predualAqc(\sou),
	\]
	which is precisely \eqref{KR:hyp} for the triple $(\nu,\lambda_0,\nu^{\infty})$. As this is its only hypothesis, \Cref{res:KR} yields \eqref{eq:micro}, that is, $\nu^{\infty} \in \Env_\Asp$.
	Conversely, assume \ref{item:micro2} and consider the homogeneous triple $\boldsymbol{\nu} \coloneqq (\delta_0,\, \mathscr L^d \resmes \Omega,\, \nu^{\infty})$, for which $\bar{\boldsymbol \nu}^{ac}(x) \equiv \bar\nu^{\infty}$. Its barycenter $\bar{\boldsymbol{\nu}} = \bar\nu^{\infty} \mathscr L^d \resmes \Omega$ is a constant multiple of the Lebesgue measure, so \eqref{charGAFYM:bary} holds because $\Asp$ annihilates constants; \eqref{eq:support} holds since $\mathrm{spt}\,\nu^{\infty} \subset \sou_\Asp$; and \eqref{charGAFYM:jensen} reads $f(\bar\nu^{\infty}) \le f(0) + \dpr{f^{\infty},\nu^{\infty}}$, which is \eqref{eq:micro} at $\var = 0$. By the sufficiency of \eqref{charGAFYM:bary}--\eqref{charGAFYM:jensen}, $\boldsymbol{\nu} \in \mathbf Y_\Asp(\Omega)$, and $\lambda(\Omega) = \mathscr L^d(\Omega) > 0$. This is \ref{item:micro1}.
\end{proof}
We may now prove the statement announced in the introduction.
\begin{proof}[Proof of \Cref{prop:convexity}]
	Denote by $\mathscr P$ the set of all $\nu^{\infty}_{x}$ arising as the angular concentration measure, at $\lambda$-almost every point $x$, of some $\boldsymbol\nu = (\nu,\lambda,\nu^{\infty}) \in \mathbf Y_\Asp(\Omega)$. We claim that $\mathscr P = \Env_\Asp$; the assertion then follows from \Cref{res:convexity}.
	The inclusion $\Env_\Asp \subset \mathscr P$ is the implication \ref{item:micro2} $\Rightarrow$ \ref{item:micro1} of \Cref{prop:micro_Young}. For the converse \ref{item:micro1} $\Rightarrow$ \ref{item:micro2} we invoke the localization principle for $\Asp$-free generalized Young measures of Arroyo-Rabasa, De Philippis and Rindler \cite[Section~4]{arroyo-rabasaLowerSemicontinuityRelaxation2020} (see also \cite[Section~4]{arroyo-rabasaCharacterizationGeneralizedYoung2021}): blowing up $\boldsymbol\nu$ at a point $x$, at $\mathscr L^d$-almost every regular point and at $\lambda^{s}$-almost every singular point one obtains a \emph{homogeneous} tangent Young measure $\boldsymbol\nu^{x} \in \mathbf Y_\Asp(\ball_1)$ whose oscillation and concentration measures are $\nu_x$ and $\nu^{\infty}_x$ and whose $\lambda$ is a non-trivial multiple of $\mathscr L^d \resmes \ball_1$ at regular points, respectively a non-trivial positive measure at singular points. In either case $\boldsymbol\nu^{x}$ satisfies \ref{item:micro1}, so that $\nu^{\infty}_x \in \Env_\Asp$ by \Cref{prop:micro_Young}.
\end{proof}
%%%%%%%%%%%%%%%%%%%%%%%%%%%%%%%%%%%%%%%%%%%%%%%%%%%%%%%%%%%%%%%%%%%%%%%%%%%%%%
%%  3.  CONSTRUCTIONS AND THE SPREADING INEQUALITY
%%%%%%%%%%%%%%%%%%%%%%%%%%%%%%%%%%%%%%%%%%%%%%%%%%%%%%%%%%%%%%%%%%%%%%%%%%%%%%
\section{Constructions and the spreading inequality}\label{sec:constructions}
The main new tool is an inequality implied by Jensen inequalities against $\Asp$-quasiconvex functions, taking inspiration from the construction of \cite{mullerQuasiconvexFunctionsWhich1992}.
%%  3.1  A spreading inequality for A-free concentrations  -------------------
\subsection{A spreading inequality for $\Asp$-free concentrations}
The following lemma establishes the explicit construction of the geometric penalty used in \Cref{sec:dpr}, and states that a measure satisfying such Jensen inequalities cannot concentrate arbitrarily close to an elliptic subspace if its barycenter is bounded away from the wave cone.
\begin{lemma}\label{res:spreading}
	Assume that $\Asp$ has constant rank and let $V \subset \sou$ be a linear subspace. Then,
	\begin{enumerate}[label={(\alph*)}]
		\item\label{item:spreading:exif} $V \cap \Lambda_{\Asp} = \{0\}$ if and only if for any given $X_0 \in V \smallsetminus \{0\}$ there exists $f \in \predualAqc(\sou)$ such that
		\begin{align*}
			f(X_0) > f(0) + f^{\infty}(X_0).
		\end{align*}
		Moreover, $f$ can be chosen to be Lipschitz.
		\item\label{item:spreading:ineq} Let $\nu^{\infty} \in \prob(\sphere_{\sou})$ be supported on $\sou_\Asp$ and suppose that
		\begin{align}\label{spreading:statement:jensen}
			f(\pol) \le f(0) + \dpr{f^{\infty}, \nu^{\infty}}
			\quad \text{for all } f \in \predualAqc(\sou),
		\end{align}
		where $\bar \nu^{\infty} = \pol$. If $V \cap \Lambda_{\Asp} = \{0\}$, then
		\begin{align}\label{spreading:statement:ineq}
			|P_V \pol| - (1+\const{V}) |P_{V^\perp} \pol| \le \const{V} \int_{\sphere_{\sou}} \dist(\var, V) \, d \nu^{\infty}(\var).
		\end{align}
		Here $\const{V}$ is the optimal Korn constant from \Cref{res:korntype}.
	\end{enumerate}
\end{lemma}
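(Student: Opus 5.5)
The plan for \ref{item:spreading:ineq} is a M\"uller-type construction: build one $\Asp$-quasiconvex integrand whose recession function is $\ckorn$-independent and equal to $\const{V}\,\dist(\cdot,V)$, but which is visibly ``concave'' near the origin, and then test \eqref{spreading:statement:jensen} against it. Part \ref{item:spreading:exif} will come out of the same construction.

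\emph{The integrand.} Fix $X_0\neq 0$. Since \eqref{spreading:statement:ineq} is positively $1$-homogeneous in $\pol$, rescalings $f_s(X)\coloneqq s f(X/s)$ are harmless: they preserve $\Asp$-quasiconvexity and the recession function. I will consider
\[
g(X)\coloneqq \const{V}\,\dist(X,V)+h(X),\qquad h(X)\coloneqq \min\{|X|,t\},
\]
with the truncation level $t$ tied to $|X_0|$ (for instance $t=|X_0|$). Since $h$ is bounded, $g\in\predual(\sou)$ with $g^\infty=\const{V}\dist(\cdot,V)$. I then pass to the envelope $f\coloneqq\QC g$.

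The upper bounds are free: $f\le g$, so $f(0)\le g(0)=0$ and $f^\infty\le g^\infty$. The real work is a \emph{lower bound} for $f(X)$. Take $u$ periodic, $\Asp$-free, with zero mean. By \Cref{res:transfer} combined with \Cref{res:korntype},
\[
\|u\|_{L^{1,\infty}(\mathbb T^d)}\le\const{V}\|P_{V^\perp}u\|_{L^1}\le \const{V}\Big(\int|P_{V^\perp}(X+u)|+|P_{V^\perp}X|\Big).
\]
The set where $|u|\ge |X|/2$ therefore has measure at most $2\,\|u\|_{L^{1,\infty}}/|X|$, and off that set $h(X+u)\ge \min\{|X|/2,t\}$. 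Two cases then close the estimate. If $\int\dist(X+u,V)$ is large, the first term of $g$ already pays. Otherwise the bad set is small, and $\int h(X+u)$ is comparable to $|X|$. Balancing the two cases, I expect
\[
f(X)\ \ge\ c\,|P_VX|-(1+\const{V})|P_{V^\perp}X|\qquad\text{for }|X|\lesssim t,
\]
up to fixing the normalisation of $h$ so that $c=1$ (inserting a prefactor in front of $h$ and rescaling). The same estimate gives $f\ge\const{V}\dist(\cdot,V)-C$ globally.

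\emph{Membership in $\predualAqc(\sou)$.} Combining the global lower bound with $f\le g$ shows that $f$ is finite and that the limit $f^\infty=\const{V}\dist(\cdot,V)$ exists. Continuity of $\hat f$ up to the sphere should follow from Lipschitz continuity of the envelope of a Lipschitz $g$. I regard this as routine but to be checked, since membership in $\predual$ is needed to plug $f$ into \eqref{spreading:statement:jensen}.

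\emph{Concluding \ref{item:spreading:ineq}.} By \eqref{spreading:statement:jensen} applied to the suitably rescaled $f$,
\[
|P_V\pol|-(1+\const{V})|P_{V^\perp}\pol|\le f(\pol)\le f(0)+\dpr{f^\infty,\nu^\infty}\le \const{V}\int\dist(\var,V)\,d\nu^\infty .
\]
This is \eqref{spreading:statement:ineq}.

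\emph{Part \ref{item:spreading:exif}.} For the ``if'' direction, take $X_0\in V\setminus\{0\}$. Then $f^\infty(X_0)=0$, $f(0)\le0$, and the lower bound gives $f(X_0)\ge c|X_0|>0$, so $f(X_0)>f(0)+f^\infty(X_0)$. The construction is Lipschitz. For the converse, suppose $V\cap\Lambda_\Asp\ni X_0\neq0$. Convexity of $t\mapsto f(tX_0)$ for every $\Asp$-quasiconvex $f$ (\cite[Prop.~3.4]{fonsecaAQuasiconvexityLowerSemicontinuity1999}) gives $f(X_0)-f(0)\le f^\infty(X_0)$, exactly as in \eqref{eq:micro:subadd}. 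Hence no such $f$ exists for this $X_0$.

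\emph{Main obstacle.} The delicate step is the lower bound on $\QC g$: the weak-$L^1$ Korn bound must be converted into a gain on $\int h(X+u)$ while keeping exactly the constants $1$ and $1+\const{V}$. If the truncation $\min\{|X|,t\}$ does not yield sharp constants, I would instead try M\"uller's choice of an $h$ that is concave and radial of the form $h(X)=\int_0^{|X|}\mathbf 1_{[0,t]}$. The layer-cake formula then matches $\int h(X+u)$ directly against $\sup_\lambda\lambda|\{|u|>\lambda\}|$.
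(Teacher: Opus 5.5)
\textbf{Overall.} Your strategy matches the paper's: take the envelope $\QC$ of $\const{V}\dist(\cdot,V)$ plus a bounded Lipschitz bump vanishing at $0$, bound it from below at $\pol$ using the torus Korn inequality (\Cref{res:transfer}) and Chebyshev, and test \eqref{spreading:statement:jensen}. The following parts are fine:
\begin{itemize}
\item the bookkeeping: $f(0)=0$, $f^{\infty}=\const{V}\dist(\cdot,V)$, and membership in $\predualAqc(\sou)$;
\item the converse in \ref{item:spreading:exif}, via convexity along $\Lambda_{\Asp}$.
\end{itemize}
One small correction: continuity of $\hat f$ at the sphere comes from the sandwich $\const{V}\dist(\cdot,V)\le \QC g\le \const{V}\dist(\cdot,V)+\sup h$, not from the Lipschitz bound.

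\textbf{The gap.} It is the step you flag yourself. The lower bound on $\QC g(\pol)$ is only ``expected'', and the two-case balancing you sketch does not by itself produce the constants. Also, $c=1$ cannot come from a fixed normalisation, since the rescaling $f_s$ leaves $c$ unchanged; it comes only from a limit in the height of the bump.

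\textbf{The missing idea.} No case split is needed. The Korn term contributes $+\|u\|_{L^{1,\infty}(\mathbb T^d)}$, and the Chebyshev loss is exactly $-\|u\|_{L^{1,\infty}(\mathbb T^d)}$ once the bump height is matched to the threshold, so the two cancel. Concretely, set $g_\eta\coloneqq\const{V}\dist(\cdot,V)+\eta\min\{|\cdot|,t\}$ with $t\ge|\pol|$, and $s\coloneqq\frac{\eta}{1+\eta}|\pol|$, so that $\eta(|\pol|-s)=s$. Then every periodic $\Asp$-free $u$ with zero mean satisfies
\[
\int_{\mathbb T^d} g_\eta(\pol+u)\,dx\ \ge\ \|u\|_{L^{1,\infty}(\mathbb T^d)}-\const{V}|P_{V^\perp}\pol|+\eta(|\pol|-s)\Big(1-\frac{\|u\|_{L^{1,\infty}(\mathbb T^d)}}{s}\Big)\ =\ \frac{\eta}{1+\eta}|\pol|-\const{V}|P_{V^\perp}\pol| .
\]
Applying \eqref{spreading:statement:jensen} to $\QC g_\eta$ and letting $\eta\to\infty$ finishes (b). Taking $\eta=1$ and $\pol\in V$ gives (a) with $f(\pol)\ge|\pol|/2$. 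A case split at a level $A<|\pol|-\const{V}|P_{V^\perp}\pol|$ can also be made to work, but only by sending $\eta\to\infty$; the direct sum is cleaner.

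\textbf{Comparison with the paper.} With this repair, your radial bump is actually a better choice than the paper's.
\begin{itemize}
\item The paper runs the same cancellation with the cone bump $\frac{\alpha}{1-\alpha}(|P_V\pol|-|\var-P_V\pol|)_+$ centred at $P_V\pol$, and lets $\alpha\to1$.
\item At $\pol+u$ that bump only sees $|u+P_{V^\perp}\pol|$. It therefore loses an extra $|P_{V^\perp}\pol|$, and it needs the non-degeneracy \eqref{spreading:proof:nondeg} to have a positive Chebyshev threshold.
\item Your $\min\{|\var|,t\}$ loses nothing in the bump and needs only $\pol\neq 0$.
\end{itemize}
You therefore obtain
\[
|\pol|-\const{V}|P_{V^\perp}\pol|\ \le\ \const{V}\int_{\sphere_{\sou}}\dist(\var,V)\,d\nu^{\infty}(\var).
\]
This implies \eqref{spreading:statement:ineq} because $|\pol|\ge|P_V\pol|$. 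Fed into the proof of \Cref{lem:intermediate}, it improves the factor $1+2\const{V}$ there to $2\const{V}$.
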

\begin{proof} The ``if'' implication in~\ref{item:spreading:exif} -- that no such $f$ can exist when $X_0\in\Lambda_{\Asp}$ -- follows directly from~\cite{kirchheimRankOneConvex2016}. We may hence focus on the ``only if'' implication, i.e.\ on constructing $f$ when $V\cap\Lambda_{\Asp}=\{0\}$.
	First, we observe that the proof of~\ref{item:spreading:exif} when $X_0 \in \sou \smallsetminus \sou_\Asp$ is trivial. Indeed, write $Q \coloneqq P_{\sou_\Asp^{\perp}}$; then $Q\pol \neq 0$, so we may pick a Lipschitz $g \in \regC_c(\sou_\Asp^{\perp})$ with $g(Q\pol) > 0 = g(0)$. By \Cref{res:transverse} the integrand $f \coloneqq g \circ Q$ lies in $\predualAqc(\sou)$, it is Lipschitz, and $f^{\infty} \equiv 0$ because $g$ is bounded; hence $f(\pol) > 0 = f(0) + f^{\infty}(\pol)$. More generally, \Cref{res:transverse} says that any $f \in \predualAqc(\sou)$ may be modified by an arbitrary function of the transversal component without losing $\Asp$-quasiconvexity. Secondly, the proof of~\ref{item:spreading:ineq} is trivial for $\pol = 0$. We may henceforth assume that $\pol \in \sou_{\Asp} \smallsetminus \{0\}$.
	In proving \ref{item:spreading:ineq} we may moreover assume that
	\begin{align}\label{spreading:proof:nondeg}
		|P_V \pol| \ >\ |P_{V^{\perp}} \pol| ,
	\end{align}
	because otherwise the left-hand side of \eqref{spreading:statement:ineq} is at most $|P_{V^{\perp}}\pol| - (1+\const{V})|P_{V^{\perp}}\pol| = -\const{V}|P_{V^{\perp}}\pol| \le 0$, whereas its right-hand side is non-negative, and there is nothing to prove. In the situation of \ref{item:spreading:exif} the condition \eqref{spreading:proof:nondeg} holds automatically, since there $\pol \in V \mysm \{0\}$.

	The proof of both \ref{item:spreading:exif} and \ref{item:spreading:ineq} will be a direct consequence of the following construction, which is inspired by the strategy in \cite{mullerQuasiconvexFunctionsWhich1992} to build non-convex quasiconvex functions.
	To reduce notation, let $\const{} \coloneqq \const{V}$. We define $f$ as the $\Asp$-quasiconvex envelope $\QC g_\alpha$ of $g_\alpha : \sou \to \mathbb{R}$ defined as follows: given $\alpha \in (0, 1)$, consider $g_\alpha(\var) \coloneqq \const{} \, |P_{V^{\perp}}\var| + \kappa(\var)$, where
	\begin{align*}
		\kappa(\var) \coloneqq \frac{\alpha}{1-\alpha} (|P_V \pol| - |\var - P_V \pol|)_+.
	\end{align*}
	The function $g_{\alpha}$ is non-negative, so that $\QC g_{\alpha} \ge 0$. The first term $\const{} \, |P_{V^{\perp}}\var|$ is positively homogeneous and convex as a function of $\var$. The perturbation $\kappa$ is compactly supported and reaches its maximum $\|\kappa\|_{\infty} = \frac{\alpha}{1-\alpha}|P_V\pol|$ at $\var = P_V \pol$. Moreover, $\kappa(0) = 0$, so that $0 \le \QC g_{\alpha}(0) \le g_{\alpha}(0) = 0$.
	Before going further, let us check that $\QC g_{\alpha}$ is an admissible test function, i.e.\ that $\QC g_{\alpha} \in \predualAqc(\sou)$, and that its recession function is \emph{exactly} $\const{}|P_{V^{\perp}}(\,\cdot\,)|$. Both are elementary properties of the envelope. On the one hand $g_{\alpha}$ is globally Lipschitz, with constant $L_\alpha \coloneqq \const{} + \frac{\alpha}{1-\alpha}$; each competitor $\var \mapsto \int_{[0,1]^d} g_{\alpha}(\var + u(y))\,dy$ is then $L_\alpha$-Lipschitz as well, and since the infimum defining $\QC g_{\alpha}$ is bounded from below by $0$, the envelope is itself $L_\alpha$-Lipschitz, in particular continuous. On the other hand $g_{\alpha} \ge \const{}|P_{V^{\perp}}(\,\cdot\,)|$, and the latter function is convex, hence $\Asp$-quasiconvex; by monotonicity of the envelope $\QC g_{\alpha} \ge \const{}|P_{V^{\perp}}(\,\cdot\,)|$, while trivially $\QC g_{\alpha} \le g_{\alpha} \le \const{}|P_{V^{\perp}}(\,\cdot\,)| + \|\kappa\|_{\infty}$. Altogether,
	\begin{align}\label{spreading:proof:sandwich}
		0 \ \le\ \QC g_{\alpha} - \const{}\,|P_{V^{\perp}}(\,\cdot\,)| \ \le\ \|\kappa\|_{\infty} ,
	\end{align}
	so that $\QC g_{\alpha}$ is the sum of a continuous positively $1$-homogeneous function and a bounded one. For such a function the transform $\widehat{\QC g_{\alpha}}$ extends continuously to $\overline{\ball}_{\sou}$, because $(1-|\var|)\,\big|\QC g_{\alpha} - \const{}|P_{V^{\perp}}(\,\cdot\,)|\big|\big(\tfrac{\var}{1-|\var|}\big) \le (1-|\var|)\|\kappa\|_{\infty} \to 0$ as $|\var| \to 1$; hence $\QC g_{\alpha} \in \predualAqc(\sou)$ and
	\begin{align}\label{spreading:proof:recession}
		(\QC g_{\alpha})^{\infty} = \const{} \, |P_{V^{\perp}}(\,\cdot\,)| .
	\end{align}

	Using \eqref{spreading:proof:recession} and \eqref{spreading:statement:jensen} with $f = \QC g_{\alpha}$, we get that
	\begin{align}\label{spreading:proof:ubound}
		\QC g_{\alpha}(\pol)
		\le \QC g_{\alpha}(0) + \int_{\sphere_{\sou}} (\QC g_{\alpha})^{\infty}(\var) \, d\nu^{\infty}(\var)
		\le \const{} \int_{\sphere_{\sou}} |P_{V^{\perp}}(\var)| \, d\nu^{\infty}(\var).
	\end{align}
	On the other hand, for any periodic $\Asp$-free $u \in \regC^\infty(\mathbb{T}^d,\sou)$ with 0 average, there holds
	\begin{align*}
		\int_{\mathbb{T}^d} g_{\alpha}(\pol + u) \, dx
		=
		\int_{\mathbb{T}^d} \left( \const{} \, |P_{V^{\perp}} (\pol + u)| + \frac{\alpha}{1-\alpha} ( |P_V \pol| - |u + P_{V^\perp} \pol| )_+ \right) dx.
	\end{align*}
	By \Cref{res:transfer}, the first term is bounded from below as
	\begin{align*}
		\int_{\mathbb{T}^d} \const{} \, |P_{V^{\perp}} (\pol + u)| \, dx
		& \geqslant \int_{\mathbb{T}^d} \const{} \, |P_{V^{\perp}} u| \, dx - \const{} |P_{V^{\perp}} \pol| \\
		& \geqslant \|u\|_{L^{1,\infty}(\mathbb{T}^d)} - \const{} |P_{V^{\perp}} \pol|.
	\end{align*}
	To bound the second term, let $E \coloneqq \left\{ |u| \le \alpha (|P_V \pol| - |P_{V^\perp} \pol|) \right\}$. On the set $E$, the integrand satisfies $|P_V \pol| - |u + P_{V^\perp} \pol| \ge |P_V \pol| - |P_{V^\perp} \pol| - |u| \ge (1-\alpha)(|P_V \pol| - |P_{V^\perp} \pol|)$.
	By \eqref{spreading:proof:nondeg} the threshold $\alpha(|P_V \pol| - |P_{V^\perp}\pol|)$ is strictly positive, so Chebyshev's inequality may be applied at that level and gives $\mathscr L^d_{\mathbb{T}^d}(E^c) \le \|u\|_{L^{1,\infty}(\mathbb T^d)} \big/ \big(\alpha(|P_V\pol| - |P_{V^\perp}\pol|)\big)$. Therefore there holds
	\begin{align*}
		\frac{\alpha}{1-\alpha} \int_{\mathbb{T}^d} \big( |P_V \pol| - |u + P_{V^\perp}\pol| \big)_+ \, dx
		&\ \ge\ \frac{\alpha}{1-\alpha} \int_{E} (1 - \alpha)\big(|P_V \pol| - |P_{V^\perp} \pol|\big) \, dx \\
		&\ =\ \alpha \big(|P_V \pol| - |P_{V^\perp} \pol|\big) \left[1 - \mathscr L^d_{\mathbb{T}^d}( E^c )\right] \\
		&\ \ge\ \alpha \big(|P_V \pol| - |P_{V^\perp} \pol|\big) - \|u\|_{L^{1,\infty}(\mathbb{T}^d)}.
	\end{align*}
	Both inequalities, combined with \eqref{spreading:proof:ubound}, imply that
	\begin{align*}
		\const{} \int_{\sphere_{\sou}} |P_{V^{\perp}}(\var)| \, d\nu^{\infty}
		\geqslant \QC g_{\alpha}(\pol)
		= \inf_{u} \int_{\mathbb{T}^d} g_{\alpha}(\pol + u) \, dx
		\geqslant \alpha |P_V \pol| - (\alpha + \const{}) |P_{V^\perp} \pol|.
	\end{align*}
	If $\pol \in V$, taking $\alpha = 1/2$ and $V = \ell_{\pol}$ yields a function $f = \QC g_{1/2}$ satisfying $f(\pol) \ge \frac 12 |\pol| > 0 = f(0)+f^\infty(\pol)$.
	Taking $\pol$ arbitrary in $V \mysm \{0\}$, we conclude to \ref{item:spreading:exif}. That $f$ can be chosen Lipschitz is already contained in the construction: $\QC g_{1/2}$ is globally Lipschitz by the argument leading to \eqref{spreading:proof:sandwich}. On the other hand, \ref{item:spreading:ineq} follows from sending $\alpha \to 1$.
\end{proof}

\begin{remark}\label{rem:homogs}
	The assumption that $\nu^\infty$ on $\sphere_{\sou}$ satisfies~\eqref{spreading:statement:jensen} is equivalent to the fact that for some $\lambda > 0$, or for any $\lambda > 0$, there holds
	\[
	f(\lambda \bar{\nu}^\infty) \le f(0) + \lambda \langle f^\infty,\nu^\infty \rangle \quad \text{for all } f \in \predualAqc(\sou).
	\]
	This follows directly by observing the function $f_\lambda(\var) \coloneqq f(\lambda \var)$ is $\Asp$-quasiconvex if and only if $f$ is $\Asp$-quasiconvex, while also $f_\lambda(0) = f(0)$ and $f_\lambda^\infty = \lambda f^\infty$.
\end{remark}
\begin{remark}[A bound on the barycenter]\label{rem:boundPol}
	By choosing $V = \lin{\pol}$ in \Cref{res:spreading}, we recover the 1-dimensional inequality $|\pol| \le \const{\lin{\pol}} \int_{\sphere_{\sou}} \dist(\var, \lin{\pol}) \, d \nu^{\infty}(\var)$. Using that $ab \leqslant (a^2 + b^2)/2$,
	\begin{align*}
		|\pol|^2
		= \int \left<\pol, \proj_{\lin{\pol}}(X)\right> d\nu^{\infty}(X)
		\leqslant \frac{|\pol|^2}{2} + \frac{1}{2} \int \left(1 - \dist(X,\lin{\pol})^2\right) d\nu^{\infty}(X).
	\end{align*}
	Jointly with the estimate $(|\pol|/\const{\lin{\pol}})^2 \leqslant \int \dist(X,\lin{\pol})^2 \, d\nu^{\infty}$, this yields
	\begin{align*}
		|\pol|^2 \leqslant \frac{|\pol|^2}{2} + \frac{1}{2} \left(1 - (|\pol|/\const{\lin{\pol}})^2\right),
	\end{align*}
	from which we deduce the bound $|\pol| \leqslant \left(1 + (\const{\lin{\pol}})^{-2}\right)^{-1/2}$.
\end{remark}
\begin{remark}[Variance form of the barycenter bound]\label{rem:variance}
	The estimate of \Cref{rem:boundPol} is in fact a variance estimate, and literally so. A probability measure carried by the unit sphere satisfies $\int_{\sphere_{\sou}}|\var|^2 \, d\nu^{\infty} = 1$, so that its variance about the barycenter is
	\[
	\Var(\nu^{\infty}) \coloneqq \int_{\sphere_{\sou}} |\var - \pol|^2 \, d\nu^{\infty}(\var) = 1 - |\pol|^2 .
	\]
	The bound of \Cref{rem:boundPol} is therefore \emph{equivalent} to
	\[
	\Var(\nu^{\infty}) \ \geqslant\ \frac{1}{1 + (\const{\lin{\pol}})^{2}} \ >\ 0 ,
	\]
	and carries no more and no less information: on a sphere, controlling the length of the barycenter and controlling the spread are the same statement. What does say more is to insert the Korn estimate \eqref{eq:korntype:statement}, which converts it into a bound in terms of the position of $\lin{\pol}$ relative to the wave cone: since $\const{\lin{\pol}} \le \ckorn\, d(\lin{\pol},\Lambda_\Asp)^{-(\lfloor d/2\rfloor+2)}$ and $d(\lin{\pol},\Lambda_\Asp) \le 1$,
	\[
	\Var(\nu^{\infty}) \ \geqslant\ \frac{d(\lin{\pol},\Lambda_\Asp)^{2(\lfloor d/2\rfloor+2)}}{d(\lin{\pol},\Lambda_\Asp)^{2(\lfloor d/2\rfloor+2)} + \ckorn^{2}}
	\ \geqslant\ \frac{d(\lin{\pol},\Lambda_\Asp)^{2(\lfloor d/2\rfloor+2)}}{1 + \ckorn^{2}} ,
	\]
	which is \eqref{eq:variance}. In words: the further the barycenter of a concentration profile sits from the wave cone, the more the profile is forced to spread on $\sphere_{\sou}$, at an explicit polynomial rate. The full inequality \eqref{spreading:statement:ineq} of course still says strictly more than this one-dimensional consequence, since it also constrains the component of $\pol$ transversal to $V$ for subspaces $V$ of any dimension.
\end{remark}
%%%%%%%%%%%%%%%%%%%%%%%%%%%%%%%%%%%%%%%%%%%%%%%%%%%%%%%%%%%%%%%%%%%%%%%%%%%%%%
%%  4.  A SIMPLE PROOF OF THE DE PHILIPPIS-RINDLER THEOREM
%%%%%%%%%%%%%%%%%%%%%%%%%%%%%%%%%%%%%%%%%%%%%%%%%%%%%%%%%%%%%%%%%%%%%%%%%%%%%%
\section{A simple proof of the De Philippis--Rindler theorem}\label{sec:dpr}
In this section, we use the construction of an $\Asp$-quasiconvex function in \Cref{res:spreading}\ref{item:spreading:exif} to provide an alternative proof of the result of De Philippis and Rindler. Let us point out that the argument does not rely on the machinery of generalized Young measures, but instead on basic geometric measure theory, standard elliptic regularity, and the classical definition of $\Asp$-quasiconvexity evaluated on smooth periodic fields.
\begin{proof}[Proof of \Cref{thm:DPR}]
	The premise of the proof works by contradiction, similarly to the initial argument in~\cite{dephilippisStructureFreeMeasures2016}. Suppose that the set $A \coloneqq \{ x \in \Omega : \mu/|\mu|(x) \notin \Lambda_{\Asp}\}$ has positive $|\mu|^s$ measure. By Lebesgue's theorem, we can find at least one $|\mu|^s$-Lebesgue point $x_0 \in \Omega \cap A$ of $\mu^s/|\mu^s|$. Without any loss of generality, we may assume $x_0 = 0 \in \Omega$. Since $0$ is a singular density point, there exists a subsequence $(r_j)_j \to 0^+$ such that the blow-up sequence $\mu_j \in \Meas(\ball_1,\sou)$ defined by
	\[
	\mu_{j} \coloneqq \frac{1}{|\mu|^s(\ball_{r_j})} \cdot T^{r_j}_\# \mu,
	\qquad
	T^{r_j}(x) \coloneqq x/r_j,
	\]
	converges weakly-$*$ in $\ball_1$ to a non-trivial $\tau = \pol \lambda$, where $\pol \coloneqq \mu/|\mu|(x_0)$ and $\lambda \in \Meas_+(\ball_1)$ is a positive measure (see \cite[Definition~2.40]{ambrosioFunctionsBoundedVariation2000}).
	Moreover, the fact that $x_0$ is a singular density point and Lebesgue's theorem yield the convergences
	\begin{align}\label{eq:ray}
		\int_{\ball_1}\dist\Big(\tfrac{d\mu_j}{d|\mu_j|},\slin{\pol}\Big)\,d|\mu_j| \; & {\longrightarrow} \; 0 , \\
		|\mu_j|^a(\ball_1) \; & {\longrightarrow} \; 0 . \label{eq:ray2}
	\end{align}
	Since the operator $\Asp$ is homogeneous, we deduce that $(\mu_j)_j$ is a sequence of $\Asp$-free measures and thus, by distributional continuity, $\tau$ is also an $\Asp$-free measure. In particular,
	\[
	\Bsp_{\pol} \lambda \coloneqq \Asp(\pol \lambda) = 0.
	\]
	The fact that $\pol \notin \Lambda_{\Asp}$ is equivalent to the scalar operator $\Bsp_{\pol}$ being elliptic in the sense of H\"ormander. In particular, by a classical regularity result (see Theorem~11.1.1 in \cite{hormanderAnalysisLinearPartial2005}; a generalized Weyl's lemma), we deduce that $\lambda = \theta \, \mathscr L^d$ for some nontrivial $\theta \in \regC^\infty(\R^d;\R^+)$. Alternatively, it is straightforward to prove that $\Bsp_{\pol} u = f$ with $f \in H^m(\Omega)$ implies $u \in H^{m+k}(\Omega)$ via Plancherel's identity (see for instance~\cite[p.~29]{mullerVariationalModelsMicrostructure1999}).
	Because $\theta$ is nontrivial and smooth, we can find a point $y_0 \in \ball_1(0)$ where $\theta_0 \coloneqq \theta(y_0) > 0$. By performing a second blow-up sequence centered at $y_0$, we can extract a new $\Asp$-free sequence $(\tilde \mu_j)_j$ on the unit cube $Q = (-1/2,1/2)^d$ such that $\tilde \mu_j \toweakstar \theta_0 \pol \, \mathscr L^d$ and $|\tilde \mu_j| \toweakstar \theta_0 \mathscr L^d$ as measures on $\overline Q$, with $\int_Q \dist\big(\tfrac{d\tilde\mu_j}{d|\tilde\mu_j|},\slin{\pol}\big) \, d|\tilde\mu_j|^s \to 0$ and $|\tilde \mu_j|^a(Q) \to 0$. In particular, for any Lipschitz $f : \sou \to \R$ with well-defined recession function $f^\infty$ it holds
	\begin{equation}
		\begin{split}\label{eq:various}
			\liminf_{j \to \infty} \left( \int_Q f(\tilde \mu_j^a) \, dx + \int_Q f^\infty(\tilde \mu_j^s/|\tilde \mu_j^s|) \, d|\tilde \mu_j^s| \right)
			& = f(0) + \liminf_{j \to \infty} \int_Q f^\infty(\tilde \mu_j^s/|\tilde \mu_j^s|) \, d|\tilde \mu_j^s| \\
			& = f(0) + f^\infty(\pol)\liminf_{j \to \infty} |\tilde \mu_j^s|(Q) \\
			& = f(0) + \theta_0 \, f^\infty(\pol).
		\end{split}
	\end{equation}
	Fixing $j$ and choosing $0 < \eps_k \ll \delta_k$ with $\delta_k \to 0$ and defining
	\[
	w_{j,k} \coloneqq (T_\#^{1-\delta_k} \tilde \mu_j) \star \rho_{\eps_k}
	\]
	we get
	\[
	\lim_{k \to \infty} \int_Q f(w_{j,k}) \, dx =
	\int_Q f(\tilde \mu_j^a) \, dx + \int_Q f^\infty(\tilde \mu_j^s/|\tilde \mu_j^s|) \, d|\tilde \mu_j^s|.
	\]
	This follows from the fact that, for a fixed measure, the mollified sequence converges to it area-strictly, together with the continuity along area-strictly converging sequences of the functional $\sigma \mapsto \int_Q f(\sigma^a)\,dx + \int_Q f^{\infty}(\sigma^s/|\sigma^s|)\,d|\sigma^s|$ attached to an integrand of linear growth; see \cite{arroyorabasaDimensionalEstimatesRectifiability2019} and, for the underlying Reshetnyak continuity theorem, \cite[Theorem~2.39]{ambrosioFunctionsBoundedVariation2000}. A classical diagonalization argument lets us find a sequence $(v_j)_j \subset \regC^\infty(Q;\sou)$ still satisfying $v_j \mathscr L^d \toweakstar \theta_0 \pol \, \mathscr L^d$,
	\begin{equation}
		\begin{split}\label{eq:various2}
			\liminf_{j \to \infty} \int_Q f(v_{j}) \, dx & =
			\liminf_{j \to \infty} \left( \int_Q f(\tilde \mu_j^a) \, dx + \int_Q f^\infty(\tilde \mu_j^s/|\tilde \mu_j^s|) \, d|\tilde \mu_j^s| \right) \\
			& = f(0) + \theta_0 \, f^\infty(\pol),
		\end{split}
	\end{equation}
	and, crucially, the asymptotic constraint
	\begin{equation}\label{eq:almostAfree}
		\Asp v_j \longrightarrow 0 \qquad \text{strongly in } W^{-k,q}(Q;\tar) \quad \text{for every } 1 < q < \tfrac{d}{d-1} .
	\end{equation}
	Indeed, each $w_{j,k}$ is the mollification of an $\Asp$-free measure, so that $\Asp w_{j,k} = 0$ exactly; the parameters $\delta_k,\eps_k$ and the diagonal extraction may therefore be chosen so that \eqref{eq:various2} and \eqref{eq:almostAfree} hold simultaneously.
	Exploiting \eqref{eq:almostAfree}, the constant-rank property and the Fonseca--M\"uller projection and cut-off techniques, in the form established by Arroyo-Rabasa, De Philippis and Rindler for operators of arbitrary order (see \cite{arroyo-rabasaLowerSemicontinuityRelaxation2020,arroyorabasaDimensionalEstimatesRectifiability2019}; the first-order case is \cite[Lemma 2.15]{fonsecaAQuasiconvexityLowerSemicontinuity1999}, see also \cite{raitaPotentialsAquasiconvexity2019}), we may replace $(v_j)_j$ by a sequence of $\Asp$-free, $Q$-periodic smooth fields $w_j \in \regC^\infty(\mathbb{T}^d;\sou)$ with zero average satisfying (identifying integration on the torus with integration over $Q$)
	\[
	\|v_j - w_j - \theta_0 \pol\|_{L^q(Q)} \to 0 \quad \text{for all $1 < q < \frac{d}{d-1}$}.
	\]
	This, the fact that $f$ is Lipschitz and~\eqref{eq:various2} imply that
	\[
	\lim_{j \to \infty} \int_Q f(\theta_0 \pol + w_{j}) \, dx = f(0) + f^\infty(\theta_0 \pol).
	\]
	If moreover $f$ is $\Asp$-quasiconvex, applying the very definition of $\Asp$-quasiconvexity directly to each $w_j$ we get from the limit above that
	\begin{align}\label{eq:contra}
		f(\theta_0 \pol) \le f(0) + f^{\infty}(\theta_0 \pol).
	\end{align}
	Let $f$ be the Lipschitz $\Asp$-quasiconvex function constructed in \Cref{res:spreading}\ref{item:spreading:exif} with $\theta_0\pol \notin \Lambda_\Asp$. By definition, it satisfies $f(0) + f^{\infty}(\theta_0\pol) < f(\theta_0 \pol)$, which is a contradiction with~\eqref{eq:contra}. This finishes the proof.
\end{proof}

\begin{remark}[Non-homogeneous equation]
	As pointed in \cite[Remark~1.3]{dephilippisStructureFreeMeasures2016}, the conclusion of \Cref{thm:DPR} extends to vector-valued measures $\mu \in \Meas(\Omega,\sou)$ satisfying
	\begin{align*}
		\Asp \mu = \sigma \qquad \text{for some measure } \sigma \in \Meas(\Omega,\tar).
	\end{align*}
	One defines an auxiliary operator $(\mu,\sigma) \mapsto \Asp \mu - \sigma$, and adapts \Cref{res:korntype} for mixed-order operators, the wave cone being that of the principal symbol (terms of highest order) that should have constant rank. The crucial part of the argument is that only the principal symbol of $\mathcal A$ dictates the asymptotic behavior of the blow-up sequence and the relevant $\Asp$-quasiconvexity bounds.
\end{remark}
%%%%%%%%%%%%%%%%%%%%%%%%%%%%%%%%%%%%%%%%%%%%%%%%%%%%%%%%%%%%%%%%%%%%%%%%%%%%%%
%%  5.  PROOF OF THE MAIN RESULTS
%%%%%%%%%%%%%%%%%%%%%%%%%%%%%%%%%%%%%%%%%%%%%%%%%%%%%%%%%%%%%%%%%%%%%%%%%%%%%%
\section{Proof of the main results}\label{sec:proofs}
\begin{proof}[Proof of \Cref{thm:spreading}]
	Let $\boldsymbol{\nu} = (\nu, \lambda, \nu^\infty)$ be a homogeneous $\Asp$-free generalized Young measure with $\lambda \in \Meas_+(\overline{\Omega})$ nontrivial. If $\bar\nu^\infty = 0$ there is nothing to prove, so we assume $\bar\nu^\infty \ne 0$.
	Suppose first that $\lambda^a$ does not vanish identically. By~\eqref{charGAFYM:jensen}, for $\mathscr L^d$-almost every $x$ with $\lambda^a(x) > 0$,
	\begin{align*}
		f(\bar{\nu} + \lambda^a(x) \bar{\nu}^{\infty}) \leqslant \langle f, \nu \rangle + \lambda^a(x) \dpr{f^{\infty}, \nu^{\infty}}
		\qquad \forall f \in \predualAqc(\sou).
	\end{align*}
	Applying the Kristensen--\Raita decoupling (\Cref{res:KR}), this implies
	\begin{align}\label{eq:decoupled_jensen}
		f(\bar{\nu}^{\infty}) \leqslant f(0) + \dpr{f^{\infty}, \nu^{\infty}}
		\qquad \forall f \in \predualAqc(\sou).
	\end{align}
	Suppose now instead that $\lambda^a \equiv 0$, so that $\lambda = \lambda^s$. The barycenter measure $\bar{\boldsymbol\nu}$ is $\Asp$-free by~\eqref{charGAFYM:bary} and its singular part equals $\bar\nu^\infty \lambda^s$; hence \Cref{thm:DPR}, proved in \Cref{sec:dpr}, gives $\bar \nu^\infty \in \Lambda_\Asp$. Since $\mathrm{spt}(\nu^\infty) \subset \sou_\Asp$ by~\eqref{eq:support}, since every $f \in \predualAqc(\sou)$ is $\Lambda_\Asp$-convex \cite[Proposition~3.4]{fonsecaAQuasiconvexityLowerSemicontinuity1999}, and since $f^\infty$ admits a nonempty convex subdifferential at each point of $\Lambda_\Asp$ \cite[Theorem~1.1]{kirchheimRankOneConvex2016}, Jensen's inequality gives $\dpr{f^\infty,\nu^\infty} \ge f^\infty(\bar \nu^\infty)$, while the monotonicity of the difference quotients of $f$ along the direction $\bar \nu^\infty \in \Lambda_\Asp$ yields
	\[
	f(\bar \nu^\infty) - f(0) \le \lim_{t \to \infty} \frac{f(t\bar \nu^\infty) - f(0)}{t} = f^\infty(\bar \nu^\infty).
	\]
	Combining the two displays gives~\eqref{eq:decoupled_jensen} in this case as well.
	In either case, the hypothesis of \Cref{res:spreading}\ref{item:spreading:ineq} is satisfied, so that for $\pol = \bar{\nu}^{\infty}$, there holds
	\begin{align*}
		|P_V \pol| - (1+\const{V}) |P_{V^\perp} \pol| \leqslant \const{V} \int_{\sphere_{\sou}} \dist(\var, V) \, d\nu^{\infty}(\var). & \qedhere
	\end{align*}
\end{proof}

To prove \Cref{thm:main}, we first show the following intermediate result.
\begin{lemma}\label{lem:intermediate}
	Let $\Asp$ satisfy the constant rank property and let $V \subset \sou$ be an elliptic subspace, i.e.\ $V \cap \Lambda_{\Asp} = \{0\}$.
	Suppose that $0 \le \alpha, \beta < \pi/2$ are angles satisfying
	\begin{equation}\label{eq:alpha_bound}
		\alpha < \frac{\cos(\beta)}{1+2\ckorn} \, d(V,\Lambda_{\Asp})^{\lfloor d/2 \rfloor + 2},
	\end{equation}
	where $\ckorn = \ckorn(\Asp,d) \ge 1$ is the constant appearing in the Korn estimate~\eqref{eq:korntype:statement} of \Cref{res:korntype}. If $\mathcal K \subset \sou$ is a closed convex cone which is elliptic, $\mathcal K \cap \Lambda_{\Asp} = \{0\}$, and which satisfies, for some $\dircone \in V \cap \sphere_{\sou}$, the aperture constraints
	\begin{align}\label{def:cone_constraints2}
		\left\{\begin{aligned}
			|P_V X| &\ge \cos(\alpha)|X| \\
			\dpr{X,Z} &\ge \cos(\beta) |X|
		\end{aligned}
		\right. \quad \qquad \forall\, X \in \mathcal K,
	\end{align}
	then every uniformly bounded sequence $(u_j)_j \subset L^1(\Omega;\sou)$ satisfying
	\begin{equation}\label{interlem:hyp}
		\begin{cases}
			\begin{aligned}
				\Asp u_j &= 0 \quad \text{in the sense of distributions,}\\
				\quad \dist(u_j, \mathcal K) &\to 0 \quad \text{in $L^1_{\mathrm{loc}}(\Omega)$,}
			\end{aligned}
		\end{cases}
	\end{equation}
	is equi-integrable on any compact subset $\bdsubset \subset \Omega$.
\end{lemma}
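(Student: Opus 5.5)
We argue by contradiction through generalized Young measures. If $(u_j)_j$ fails to be equi-integrable on some compact $\bdsubset \subset \Omega$, then, since it is bounded in $L^1$, we extract a subsequence generating an $\Asp$-free generalized Young measure $\boldsymbol\nu = (\nu,\lambda,\nu^\infty) \in \mathbf Y_\Asp(\Omega)$ (compactness in $\mathbf Y(\Omega,\sou)$ together with \Cref{def:A_free_GYM}). Its concentration part satisfies $\lambda(\bdsubset)>0$, because loss of equi-integrability on $\bdsubset$ means that mass concentrates there. The constraint $\dist(u_j,\mathcal K)\to 0$ in $L^1_{\mathrm{loc}}$ passes to the concentration part: test with $f(x,\var) = \varphi(x)\dist(\var,\mathcal K)$, which lies in $\predual(\Omega,\sou)$ because $\mathcal K$ is a cone and so $f^\infty = f$. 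This gives $\int \varphi \dpr{\dist(\cdot,\mathcal K),\nu^\infty_x}\,d\lambda = 0$, hence $\mathrm{spt}\,\nu^\infty_x \subset \mathcal K \cap \sphere_\sou$ for $\lambda$-a.e.\ $x \in \Omega$.

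Next we localize. By the blow-up principle used in the proof of \Cref{prop:convexity}, at $\lambda$-a.e.\ $x$ we have $\nu^\infty_x \in \Env_\Asp$, so $\nu^\infty_x$ satisfies the decoupled Jensen inequality \eqref{spreading:statement:jensen} and is supported in $\sou_\Asp$. We fix such an $x$ with $\nu^\infty \coloneqq \nu^\infty_x$ carried by $\mathcal K\cap\sphere_\sou$, and we apply \Cref{res:spreading}\ref{item:spreading:ineq} (equivalently \Cref{thm:spreading}) to get
\[
|P_V\pol| - (1+\const{V})|P_{V^\perp}\pol| \le \const{V}\int \dist(\var,V)\,d\nu^\infty(\var), \qquad \pol = \bar\nu^\infty .
\]

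The core of the argument is the geometric estimate that contradicts this inequality. For $\var \in \mathcal K \cap \sphere_\sou$ we have $\dist(\var,V) = |P_{V^\perp}\var| = \sqrt{1-|P_V\var|^2} \le \sin\alpha \le \alpha$, so both $|P_{V^\perp}\pol|\le\alpha$ and the right-hand side is at most $\const{V}\alpha$. In the other direction, $\dpr{\pol,\dircone} = \int\dpr{\var,\dircone}\,d\nu^\infty \ge \cos\beta$, and since $\dircone\in V$ this yields $|P_V\pol| \ge \cos\beta$; it is the acuteness $\beta<\pi/2$ that keeps the barycenter away from zero. Substituting, the spreading inequality forces
\[
\cos\beta \le (1+2\const{V})\,\alpha \le (1+2\ckorn)\,d(V,\Lambda_\Asp)^{-(\lfloor d/2\rfloor+2)}\,\alpha ,
\]
using \Cref{res:korntype} together with $d(V,\Lambda_\Asp)\le 1$ and $\ckorn \ge 1$ to absorb the constant $1$. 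This contradicts \eqref{eq:alpha_bound}. Hence $\lambda$ vanishes on $\bdsubset$, which yields equi-integrability on $\bdsubset$.

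The step I expect to be the main obstacle is the link between failure of equi-integrability and the nontriviality of $\lambda$ on $\bdsubset$. Mass escaping to $\partial\Omega$ is invisible on $\bdsubset$, so we should work on a slightly larger open set $\bdsubset \Subset U \Subset \Omega$ and use the standard fact that equi-integrability on $\bdsubset$ holds whenever $\lambda(\overline U)=0$ for the subsequential Young measure, applied to every subsequence. A second delicate point is to make sure that the support constraint and the Jensen inequality hold at a common $\lambda$-typical point; this follows by intersecting two full-measure sets.
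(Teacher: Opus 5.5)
Your proof is correct. Its skeleton matches the paper's: a contradiction through a generated $\Asp$-free Young measure, then \Cref{res:spreading}\ref{item:spreading:ineq}, then the same estimates $|P_{V^\perp}\pol|\le\sin\alpha$, $\int\dist(\var,V)\,d\nu^\infty\le\sin\alpha$ and $|P_V\pol|\ge\dpr{\pol,\dircone}\ge\cos\beta$.

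The one genuine difference is how you obtain a profile that is carried by $\mathcal K\cap\sphere_\sou$ and satisfies the decoupled Jensen inequality.

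\emph{The paper's route.} It averages, setting $\mu^\infty=\lambda(\bdsubset)^{-1}\int_\bdsubset\nu^\infty_x\,d\lambda$, and must then re-derive the Jensen inequality for this average. This takes \Cref{res:KR} at regular points, and \Cref{thm:DPR} together with the Kirchheim--Kristensen subdifferential at singular points. It then integrates the translated inequality at $\var=\pol-\bar\nu^\infty_x$ and applies a second decoupling to $(\mu,1,\mu^\infty)$. In effect this shows inline that $\Env_\Asp$ is closed under $\lambda$-averages.

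\emph{Your route.} You evaluate at a single $\lambda$-typical point and take $\nu^\infty_x\in\Env_\Asp$ from the proof of \Cref{prop:convexity}. One profile already gives the contradiction, so the averaging is superfluous and your argument is shorter. Its only cost is that, at singular points, that proof relies on the external blow-up principle of Arroyo-Rabasa--De Philippis--Rindler, which the paper's argument avoids.

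You can remove that dependence too. Since $|\bar\nu^\infty_x|\ge\cos\beta>0$ for $\lambda$-a.e.\ $x$, the singular part $|\bar\nu^\infty_x|\,\lambda^s$ of $|\bar{\boldsymbol\nu}|$ has the same null sets as $\lambda^s$. So \Cref{thm:DPR} applied to the $\Asp$-free measure $\bar{\boldsymbol\nu}$ gives $\bar\nu^\infty_x\in\Lambda_\Asp$ for $\lambda^s$-a.e.\ $x\in\Omega$. This is incompatible with $\bar\nu^\infty_x\in\mathcal K\mysm\{0\}$, hence $\lambda^s\resmes\Omega=0$, in the spirit of Step~1 of the proof of \Cref{thm:main}. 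Then at $\mathscr L^d$-a.e.\ $x$ with $\lambda^a(x)>0$, \Cref{rem:KRsupp} gives $\nu^\infty_x\in\Env_\Asp$ directly.

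Finally, the obstacle you flag is not a real one. $\bdsubset$ is a compact subset of $\Omega$, so loss of equi-integrability on $\bdsubset$ is charged by $\lambda$ on $\bdsubset$ itself; this is the fact the paper quotes from Alibert--Bouchitt\'e. No enlargement to $U$ is needed.
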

\begin{proof}
	Assume by contradiction that $(u_j)_j$ fails to be equi-integrable on some compact $\bdsubset \subset \Omega$.
	Up to taking a subsequence, we may assume that $(u_j |_{\bdsubset})_j$ still fails to be equi-integrable, and generates an $\Asp$-free Young measure $\boldsymbol{\nu} = (\nu, \lambda, \nu^{\infty})$ on $\bdsubset$. Since $(u_j)_j$ is not equi-integrable, there holds $\lambda(\bdsubset) > 0$ \cite[Theorem~2.9]{alibertNonUniformIntegrabilityGeneralized1997}. By~\eqref{interlem:hyp} and the definition of the convergence in the sense of Young measures,
	\begin{equation}\label{main:proof:conc}
		\int_{\bdsubset} \int_{\sou} \dist(\var, \mathcal K) \, d\nu_x(\var) \, dx + \int_{\bdsubset} \int_{\sphere_{\sou}} \dist(\var, \mathcal K) \, d\nu_x^{\infty}(\var) \, d\lambda = 0.
	\end{equation}
	Hence $\nu_x^{\infty}$ is concentrated on $\mathcal K$ for $\lambda$-almost any point. By the convexity of $\mathcal K$, the averaged probability measure $\mu^{\infty} \coloneqq \lambda(\bdsubset)^{-1} \int_{\bdsubset} \nu_x^{\infty} \, d\lambda$ is still so, and its barycenter $\pol \coloneqq \bar \mu^{\infty}$ belongs to $\mathcal K$. Moreover $\dpr{\pol,\dircone} = \int \dpr{\var,\dircone} \, d\mu^\infty(\var) \ge \cos(\beta) > 0$, so that $\pol \ne 0$; since $\mathcal K \cap \Lambda_{\Asp} = \{0\}$, we conclude $\pol \notin \Lambda_{\Asp}$.
	We check that $\mu^{\infty}$ satisfies the hypothesis of \Cref{res:spreading}\ref{item:spreading:ineq}; note first that $\mathrm{spt}\,\mu^\infty \subset \sou_\Asp$ by~\eqref{eq:support}.
	Combining the Jensen inequalities \eqref{charGAFYM:jensen} with the Kristensen--\Raita decoupling (\Cref{res:KR}), there holds for $\mathscr L^d$-almost every $x \in \bdsubset$ with $\lambda^a(x)>0$ that
	\begin{equation}\label{main:proof:anotherJensen}
		f(\var + \bar{\nu}_x^{\infty}) \le f(\var) + \dpr{f^{\infty}, \nu_x^{\infty}}
	\end{equation}
	for any $f \in \predualAqc(\sou)$ and $\var \in \sou$.
	On the other hand, \Cref{thm:DPR} implies that for any $\Asp$-free Young measure $\boldsymbol{\nu} = (\nu,\lambda,\nu^{\infty})$, the barycenter $\bar\nu^{\infty}_x$ belongs to $\Lambda_{\Asp}$ for $\lambda^s$-almost any $x \in \Omega$. Now, any $f \in \predualAqc(\sou)$ is convex along the directions of the wave cone $\Lambda_{\Asp}$ \cite[Proposition~3.4]{fonsecaAQuasiconvexityLowerSemicontinuity1999}, and $f^{\infty}$ admits a nonempty convex subdifferential at any point of $\Lambda_{\Asp}$ \cite[Theorem~1.1]{kirchheimRankOneConvex2016}. It follows that for any $\var \in \sou$, there holds
	\begin{equation}\label{main:proof:singpts}
		f(\var + \bar \nu^{\infty}_x) - f(\var)
		\le f^{\infty}(\bar \nu^{\infty}_x)
		\le \dpr{f^{\infty}, \nu_x^{\infty}},
	\end{equation}
	this time for $\lambda^s$-almost every point $x \in \bdsubset$.
	Let $f \in \predualAqc(\sou)$. Applying \eqref{main:proof:anotherJensen}-\eqref{main:proof:singpts} to $\var = \pol - \bar{\nu}_x^{\infty}$ and integrating against the normalized measure $\lambda/\lambda(\bdsubset)$, we get
	\begin{equation*}
		f(\pol) \le \lambda(\bdsubset)^{-1} \int_{\bdsubset} f(\pol -\bar{\nu}_x^{\infty}) d\lambda(x) +
		\dpr{f^{\infty}, \mu^{\infty}}.
	\end{equation*}
	Note that $\mu \coloneqq \lambda(\bdsubset)^{-1} (\pol -\bar{\nu}_x^{\infty})_{\#} \lambda$ satisfies $\mathrm{spt}(\mu) \subset \sou_\Asp$ and has null barycenter, since
	\begin{equation*}
		\int_{\sou} \var \, d\mu(\var)
		= \lambda(\bdsubset)^{-1} \int_{\bdsubset} (\pol - \bar{\nu}_x^{\infty}) d\lambda(x)
		= \pol - \int_{\sou} \var \, d\mu^{\infty}(\var)
		= 0.
	\end{equation*}
	The displayed inequality is therefore exactly~\eqref{KR:hyp} for the triple $(\mu,1,\mu^{\infty})$. Applying the Kristensen--\Raita decoupling once more, this time to $(\mu,1,\mu^{\infty})$, we deduce that $f(\bar \mu^{\infty}) = f(\pol) \le f(0) + \dpr{f^{\infty}, \mu^{\infty}}$ for all $f \in \predualAqc(\sou)$. From \Cref{res:spreading}\ref{item:spreading:ineq}, we obtain the spreading bound:
	\begin{equation}\label{eq:spread_eval}
		|P_V \pol| - (1+\const{V}) |P_{V^\perp} \pol| \le \const{V} \int_{\sphere_{\sou}} |P_{V^{\perp}}(\var)| \, d\mu^{\infty}(\var).
	\end{equation}
	We now evaluate this inequality using the geometric constraints~\eqref{def:cone_constraints2}. Because $\mu^\infty$ is a probability measure supported on $\mathcal K \cap \sphere_{\sou}$, the integral of the orthogonal projection is bounded by $\sin(\alpha)$. By Jensen's inequality, the barycenter also satisfies $|P_{V^\perp} \pol| \le \sin(\alpha)$. Furthermore, because the unit vector $\dircone$ belongs to $V$, the Cauchy--Schwarz inequality and the fact that $\mu^\infty$ is supported on $S_\sou$ bounds the projection onto $V$ from below:
	\begin{equation*}
		|P_V \pol|
		\ge \dpr{P_V \pol, \dircone}
		= \dpr{\pol, \dircone}
		= \int_{\sphere_{\sou}} \dpr{\var, \dircone} \, d\mu^\infty(\var)
		\ge \cos(\beta).
	\end{equation*}
	Substituting these bounds into \eqref{eq:spread_eval} yields $\cos(\beta) - (1+\const{V}) \sin(\alpha) \le \const{V} \sin(\alpha)$, which implies $\cos(\beta) \le (1 + 2\const{V}) \sin(\alpha)$. Using the standard inequality $\sin(\alpha) \le \alpha$, this forces $\alpha \ge \frac{\cos(\beta)}{1+2\const{V}}$. By \Cref{res:korntype}, we have $\const{V} \le \ckorn \, d(V,\Lambda_\Asp)^{-(\lfloor d/2 \rfloor + 2)}$. Since $d(V,\Lambda_\Asp) \le 1$, we can coarsely bound
	\begin{equation*}
		\alpha \ge \frac{\cos(\beta)}{1+2\ckorn} \, d(V,\Lambda_{\Asp})^{\lfloor d/2 \rfloor + 2},
	\end{equation*}
	which contradicts the choice of $\alpha$ in \eqref{eq:alpha_bound}. Hence $(u_j)_j$ must be equi-integrable on $\bdsubset$.
\end{proof}
\begin{remark}[Symmetric-aperture cones]\label{rem:equal}
	We observe that in the symmetric case where $\alpha = \beta$, we can exploit the exact trigonometric identity to drop the cosine term. The inequality $\cos(\alpha) \le (1 + 2\const{V}) \sin(\alpha)$ rearranges to $\cot(\alpha) \le 1 + 2\const{V}$, or equivalently, $\alpha \ge \arctan(1/(1+2\const{V}))$. Since $t = 1/(1+2\const{V})$ is in $(0, 1]$, the strict concavity of the arctangent function enforces the lower bound $\arctan(t) \ge \frac{\pi}{4} t$. We thus obtain $\alpha \ge \frac{\pi}{4(1+2\const{V})}$. Bounding $\const{V}$ as before, we conclude that a non-trivial measure requires $\alpha \ge \frac{\pi}{4(1+2\ckorn)} d(V, \Lambda_{\Asp})^{\lfloor d/2 \rfloor + 2}$, recovering a scaling without independent trigonometric parameters.
\end{remark}
We deduce the following corollary on vector-valued measures in the kernel of $\Asp$.
\begin{corollary}\label{res:main:measure}
	Let $V,\alpha,\beta$ and $\mathcal K$ be as in \Cref{lem:intermediate}.
	Let $(\mu_j)_{j \in \mathbb{N}} \subset \Meas(\Omega;\sou)$ be a sequence of vector-valued measures such that $|\mu_j|(\Omega) \leqslant C$ for some constant $C$, while
	\begin{align*}
		\Asp \mu_j = 0 \quad \text{ in the sense of distributions on } \Omega,
	\end{align*}
	and there holds
	\begin{align*}
		\int_{\Omega} \dist\left(\frac{d\mu_j}{d|\mu_j|}, \mathcal K\right) d|\mu_j| \underset{j \to \infty}{\longrightarrow} 0.
	\end{align*}
	Then, for any compact $K \subset \Omega$, $(|\mu_j|^a)_j$ is equi-integrable on $K$ and $|\mu_j|^s(K) \to 0$.
\end{corollary}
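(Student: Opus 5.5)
The plan is to reduce the corollary to \Cref{lem:intermediate} by regularizing each $\mu_j$. Mollification preserves the $\Asp$-free constraint and, by Reshetnyak continuity, turns the singular mass into absolutely continuous mass while approximately preserving the distance-to-cone functional. I would fix a compact $K \subset \Omega$ and an intermediate open set $K \Subset U \Subset \Omega$. For each $j$ I choose a mollification radius $\eps_j \downarrow 0$ with $\eps_j < \dist(U,\partial\Omega)$ and set $u_j \coloneqq \mu_j \star \rho_{\eps_j}$ on $U$.

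\emph{Checking the hypotheses on $U$.} The $u_j$ are smooth on $U$, satisfy $\Asp u_j = 0$ there (since $\Asp$ has constant coefficients), and obey $\|u_j\|_{L^1(U)} \le C$. Because $\var \mapsto \dist(\var,\mathcal K)$ is convex and positively $1$-homogeneous, Jensen's inequality for the mollifier gives pointwise
\[
\dist(u_j(x),\mathcal K) \le \Big(\dist\big(\tfrac{d\mu_j}{d|\mu_j|},\mathcal K\big)|\mu_j|\Big)\star\rho_{\eps_j}(x).
\]
Integrating over $U$ shows $\|\dist(u_j,\mathcal K)\|_{L^1(U)} \to 0$. \Cref{lem:intermediate} applied on $U$ then says that $(u_j)_j$ is equi-integrable on compact subsets of $U$, in particular on a compact neighbourhood $K' \subset U$ of $K$.

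\emph{Transferring back to $\mu_j$.} This is the delicate step: equi-integrability of the smoothings must be turned into smallness of $|\mu_j|^s(K)$ and equi-integrability of $|\mu_j|^a$. I would choose $\eps_j$ so small, depending on $j$, that $|u_j|\mathscr L^d$ is close to $|\mu_j|$ in a strong sense. For a fixed measure, $\rho_\eps \star \mu_j \to \mu_j$ area-strictly, so for any threshold $M$ one has
\[
\limsup_{\eps \to 0} \int_{K'} (|\mu_j\star\rho_\eps| - M)_+ \, dx \ \ge\ |\mu_j|^s(\mathrm{int}\,K') + \int_{\mathrm{int}\,K'} (|\mu_j^a| - M)_+ \, dx .
\]
This follows from the Reshetnyak-type continuity of $\sigma \mapsto \int f(\sigma^a) + \int f^\infty \, d|\sigma^s|$ for $f = (|\cdot|-M)_+$, which has $f^\infty = |\cdot|$. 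Picking $\eps_j$ so that the left side at $\eps = \eps_j$ is within $1/j$ of the right side for $M = 1,\dots,j$, a diagonal argument works. Equi-integrability of $(u_j)$ on $K'$ means that $\sup_j \int_{K'} (|u_j| - M)_+ \to 0$ as $M \to \infty$. Hence
\[
\sup_{j \ge M} \Big[|\mu_j|^s(K) + \int_K (|\mu_j^a| - M)_+\Big] \to 0 ,
\]
and the finitely many remaining indices are handled individually, since each $|\mu_j^a| \in L^1$. It follows that $|\mu_j|^s(K) \to 0$: for any $\delta > 0$, first pick $M$ with the uniform bound below $\delta$, then take $j \ge M$. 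It also follows that $(|\mu_j|^a)_j$ is equi-integrable on $K$.

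The main obstacle is precisely the choice of $\eps_j$ that controls all truncation levels simultaneously, which is why I would organise it as the diagonal argument over $M \le j$ described above.
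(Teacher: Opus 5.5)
Your proof is correct. It shares its first half with the paper's and takes a different route for transferring equi-integrability back to $\mu_j$.

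\textbf{Common part.} Both arguments mollify $\mu_j$ at a $j$-dependent radius. Both use the sublinearity of $\dist(\cdot,\mathcal K)$ to obtain the $L^1$ hypothesis, and both invoke \Cref{lem:intermediate} on an intermediate open set.

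\textbf{The paper's transfer step.} It uses the small-set characterization of equi-integrability. Take arbitrary $A_j\subset K$ with $\mathscr L^d(A_j)\to0$. The paper picks compacts $D_j\subset A_j$ carrying half of $|\mu_j|(A_j)$, and continuous cut-offs $\varphi_j$ equal to $1$ on $D_j$ with $\mathscr L^d(\{\varphi_j>0\})\to0$. It then chooses $\eps_j$, depending on $\varphi_j$, via strict convergence of the mollifications. Combined with the cone inequality \eqref{eq:XY}, this gives $|\mu_j|(A_j)\to0$. Both conclusions are read off from this single claim, taking Lebesgue-null $A_j$ for the singular part.

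\textbf{Your transfer step.} You use the truncation characterization together with lower semicontinuity for the convex integrand $(|\cdot|-M)_+$. Its recession function $|\cdot|$ makes the singular mass appear directly in the lower bound. So one estimate yields both $|\mu_j|^s(K)\to0$ and equi-integrability. It needs no inner approximations, no cut-offs and no geometry of $\mathcal K$, which also shows that the paper's detour through $P_V$ and \eqref{eq:XY} is not essential. The price is a diagonal choice of $\eps_j$ over all levels $M\le j$.

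Two details need tightening.

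\emph{Liminf, not limsup.} Your displayed inequality must hold as a $\liminf$, not merely a $\limsup$. Choosing one $\eps_j$ that works for $M=1,\dots,j$ simultaneously needs the bound for all sufficiently small $\eps$, not along an $M$-dependent subsequence.
\begin{itemize}
\item The $\liminf$ form is true by Reshetnyak's lower semicontinuity on the open set $\mathrm{int}\,K'$.
\item Apply it to the pair $(\mathscr L^d,\mu_j\star\rho_\eps\,\mathscr L^d)$ and the perspective $(t,\var)\mapsto(|\var|-Mt)_+$, which is convex and positively $1$-homogeneous.
\item Area-strict continuity on $\mathrm{int}\,K'$ would not do this on its own, since it would require $|\mu_j|(\partial K')=0$.
\end{itemize}

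\emph{The indices $j<M$.} Here the set of remaining indices grows with $M$, so it is not a fixed finite set. First fix a level $M_1$ with $\sup_{j\ge M_1}\int_K(|\mu_j^a|-M_1)_+<\delta$. Then use that $(|\cdot|-M)_+$ decreases in $M$. Only the fixed finite family $j<M_1$ then has to be handled individually.
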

\begin{proof}
	Let $K\subset \Omega$ be compact, and $(A_j)_j$ be a sequence of subsets of $K$ such that $\mathscr L^d(A_j) \to 0$ when $j \to \infty$.
	We claim that
	\begin{align}\label{main:measure:limit}
		\lim_{j \to \infty} |\mu_j|(A_j) = 0.
	\end{align}
	Let $r > 0$ be such that the open set $K^r \coloneqq \left\{ x \st \dist(x,K) < r \right\}$ is contained in $\Omega$. For each $j$, let $D_j \subset A_j$ be a compact set such that $|\mu_j|(D_j) \geqslant |\mu_j|(A_j)/2$. By compactness of $D_j$, we can construct a function $\varphi_j \in \regC_c(K^{r/4};[0,1])$ such that $\varphi_j(x) = 1$ if $x \in D_j$ and $\mathscr L^d(\{\varphi_j > 0\}) \leqslant \mathscr L^d(D_j) + 2^{-j}$.
	We claim that for any $\var \in \sou$, there holds
	\begin{equation}\label{eq:XY}
		|P_{V}\var| \geqslant \cos(\alpha) |\var| - 2 \dist(\var,\mathcal K).
	\end{equation}
	Indeed, for $\varalt \in \mathcal K$ such that $|\var - \varalt| = \dist(\var,\mathcal K)$, there holds
	\begin{align*}
		|P_{V}\varalt| \leqslant |P_{V}\var| + |P_{V}(\varalt-\var)| \leqslant |P_{V}\var| + \dist(\var,\mathcal K),
	\end{align*}
	while due to the aperture constraints on $\mathcal K$ and positivity of $\cos(\alpha)$,
	\begin{align*}
		|P_{V}\varalt| \geqslant \cos(\alpha) |\varalt| \geqslant \cos(\alpha) |\var| - \cos(\alpha) |\var - \varalt| \geqslant \cos(\alpha) |\var| - \dist(\var,\mathcal K).
	\end{align*}
	Gathering these two inequalities yields the desired inequality.
	Casting the pointwise value $\var = \frac{d\mu_j}{d|\mu_j|}(x)$ into~\eqref{eq:XY} and integrating over $|\mu_j|$ yields:
	\begin{align*}
		\int \varphi_j \left|P_V\left(\frac{d\mu_j}{d|\mu_j|}\right)\right| \, d|\mu_j|
		\geqslant \cos(\alpha) |\mu_j|(D_j) - 2 \int_{\Omega} \dist\left(\frac{d\mu_j}{d|\mu_j|}, \mathcal K\right) \, d|\mu_j|.
	\end{align*}
	Let $(\rho_\eps)_{\eps}$ be a family of standard mollifiers. For a \emph{fixed} $j$, the mollifications $\mu_j * \rho_\eps$ converge strictly to $\mu_j$ as $\eps \to 0$, so that $|P_V(\mu_j*\rho_\eps)| \, \mathscr L^d \toweakstar |P_V \mu_j|$ (see \cite[Prop.~1.62 and Thm.~2.2]{ambrosioFunctionsBoundedVariation2000}); since $|P_V (\cdot)|$ is a continuous seminorm and $\varphi_j$ is continuous with compact support, we may therefore find $0 < \eps_j < r/4$ small enough so that the smoothed density $u_j \coloneqq \mu_j * \rho_{\eps_j}$ satisfies
	\begin{align*}
		\int \varphi_j |P_V u_j| \, dx
		\geqslant \int \varphi_j \left|P_V\left(\frac{d\mu_j}{d|\mu_j|}\right)\right| d|\mu_j| - 2^{-j}.
	\end{align*}

	Since $\eps_j < r/4$, the density $u_j$ is well defined and $\Asp$-free on the open set $K^{r/2}$, where it satisfies $\|u_j\|_{L^1(K^{r/2})} \leqslant |\mu_j|(\Omega) \leqslant C$. Since $K^{r} \subset \Omega$ and the function $\dist(\cdot,\mathcal K)$ is convex and positively $1$-homogeneous, Jensen's inequality gives
	\begin{align*}
		\int_{\overline{K^{r/2}}} \dist(u_j, \mathcal K) \, dx
		\leqslant \int_{K^r} \dist\left(\frac{d\mu_j}{d|\mu_j|}, \mathcal K\right) \, d|\mu_j|
		\underset{j \to \infty}{\longrightarrow} 0.
	\end{align*}

	Applying \Cref{lem:intermediate} on the open set $K^{r/2}$ and the compact subset $\overline{K^{r/4}} \subset K^{r/2}$, which contains $\{\varphi_j > 0\}$ for every $j$, we discover that the sequence $(u_j)_j$ must be equi-integrable there.

Therefore, gathering the previous inequalities and recalling that $\mathscr L^d(\{\varphi_j > 0\}) \to 0$,
	\begin{align*}
		\limsup_{j \to \infty} \cos(\alpha) |\mu_j|(D_j)
		& \leqslant \limsup_{j \to \infty} \int_{\{\varphi_j > 0\}} |P_V u_j(x)| dx
		\\& \leqslant \limsup_{j \to \infty} \int_{\{\varphi_j > 0\}} |u_j(x)| dx = 0.
	\end{align*}
	This implies \eqref{main:measure:limit}.
	We deduce that $|\mu_j|^s \to 0$ on each bounded subset of $\Omega$; otherwise, there would exist equibounded compacts $K_{j} \subset \Omega$ of null Lebesgue measure such that $0 < \limsup_j |\mu_j|^s(K_j) \leqslant \limsup_j |\mu_j|(K_j)$, contradicting \eqref{main:measure:limit}. Similarly, $(|\mu_j|^a)_j$ must be equi-integrable on compact subsets of $\Omega$.
	Otherwise, one could find some bounded $U \Subset \Omega$ and $\eps > 0$ such that for any $\delta > 0$, there exist $j$ and $A \subset U$ of measure less than $\delta$ for which $\int_{A} |\mu_j|^a dx \geqslant \eps$. For each $k$, pick a measurable $A_k \subset U$ of measure $|A_k| \leqslant 2^{-k}$, and $j_k$ such that $|\mu_{j_k}|^a(A_k) \geqslant \eps$. The equibounded sequence $(A_k)_k$ has vanishing measure, but
	\begin{align*}
		\limsup_{k \to \infty} |\mu_{j_k}|(A_k)
		\geqslant \limsup_{k \to \infty} |\mu_{j_k}|^a(A_k)
		\geqslant \eps,
	\end{align*}
	contradicting \eqref{main:measure:limit}.
\end{proof}
In the case where the sequence $(\mu_j)_j$ is additionally assumed to converge weakly-$*$ towards a measure $\mu$, we obtain our main result.
\begin{proof}[Proof of \Cref{thm:main}]
	Let $(\mu_j)_j \subset \Meas(\Omega;\sou)$ be a uniformly bounded sequence of $\Asp$-free measures converging weakly-$*$ towards $\mu \in \Meas(\Omega;\sou)$ and satisfying the third hypothesis of \Cref{thm:main}, that is,
	\begin{align*}
		\int_{\Omega} \dist\left(\frac{d \mu_j}{d|\mu_j|}(x),\mathcal K\right) d|\mu_j|(x) \to 0.
	\end{align*}

	\emph{Step 1: the singular parts vanish, globally.}
	This first step uses neither the weak-$*$ convergence of $(\mu_j)_j$ nor any localization; it rests solely on the fact that $\mathcal K$ and $\Lambda_{\Asp}$ are closed cones meeting only at the origin. If $\Lambda_\Asp = \{0\}$, then \Cref{thm:DPR} forces $|\mu_j|^{s} \equiv 0$ and there is nothing to prove. Otherwise,
	\[
	\delta \coloneqq \min\big\{ \dist(\varalt,\mathcal K) \st \varalt \in \Lambda_\Asp, \ |\varalt| = 1 \big\} > 0 ,
	\]
	while \Cref{thm:DPR} gives $\frac{d\mu_j}{d|\mu_j|} \in \Lambda_\Asp$ at $|\mu_j|^{s}$-almost every point. Consequently
	\[
	\delta \, |\mu_j|^{s}(\Omega) \ \le\ \int_{\Omega} \dist\Big(\tfrac{d\mu_j}{d|\mu_j|},\mathcal K\Big) \, d|\mu_j| \ \longrightarrow\ 0 ,
	\]
	that is, $|\mu_j|^{s} \to 0$ strongly in $\Meas(\Omega)$.

	\emph{Step 2: the absolutely continuous parts converge weakly in $L^1_{\mathrm{loc}}$.}
	Fix a compact set $K \subset \Omega$. By \Cref{res:main:measure}, we deduce that $(|\mu_j|^a)_j$ is equi-integrable on $K$ and that $|\mu_j|^s(K) \to 0$. By the Dunford--Pettis theorem \cite[Theorem~1.38]{ambrosioFunctionsBoundedVariation2000}, $(\mu_j^a)_j$ is therefore weakly relatively compact in $L^1(K;\sou)$. Since $|\mu_j|^s(K) \to 0$, every weak limit point of $(\mu_j^a)_j$ must coincide with $\mu \resmes K$; hence $\mu\resmes K = v \, \mathscr L^d \resmes K$ for some $v \in L^1(K;\sou)$ and the whole sequence $(\mu_j^a)_j$ converges weakly to $v$ in $L^1(K;\sou)$. As $K \subset \Omega$ was an arbitrary compact set, the conclusion $\frac{d\mu_j}{d\mathscr L^d} \rightharpoonup v$ in $L^1_{\mathrm{loc}}(\Omega;\sou)$ follows.

	\emph{Step 3: the limit density is globally integrable.}
	The exhaustion of Step 2 is needed only for the weak convergence. Indeed, $\mu \ll \mathscr L^d$ on all of $\Omega$ by Step 2, and its density $v$ satisfies $\int_{\Omega} |v| \, dx = |\mu|(\Omega) < \infty$, so that $v \in L^1(\Omega;\sou)$. Together with Step 1, this is exactly the assertion of \Cref{thm:main}.
\end{proof}

\begin{remark}[No higher integrability]\label{rem:no_higher_int}
	Let $V$, $\alpha$, $\beta$ and $\mathcal K \in \mathcal C_{\alpha,\beta}(V)$ be as in \Cref{thm:main}. Fix $\zeta \in \Lambda_\Asp \mysm \{0\}$ and $\xi_0 \in \sphere^{d-1}$ with $\Afv(\xi_0)\zeta = 0$, and fix $\dircone \in \mathcal K$ with $|\dircone| = 1$. For $\eps \in (0,\tfrac12)$ and $a>0$, let $h_{\eps,a} : \R \to \R$ be the $1$-periodic function of zero mean equal to $a$ on a set of measure $\eps$ in each period and to $-a\eps/(1-\eps)$ elsewhere (mollified, if smoothness is desired). Since $\Afv(\xi_0)\zeta = 0$, the plane wave
	\[
	w_{\eps,a}(x) \coloneqq \zeta \, h_{\eps,a}(x \cdot \xi_0)
	\]
	is $\Asp$-free, and on a bounded $\Omega$ one has $\|w_{\eps,a}\|_{L^1} \simeq a\eps$ and $\|w_{\eps,a}\|_{L^p}^p \simeq a^p \eps$. Given $p > 1$, choose $s \in (1/p,1)$, put $a_j = \eps_j^{-s}$ with $\eps_j \downarrow 0$, and set
	\[
	v_j \coloneqq \dircone + w_{\eps_j,a_j} .
	\]
	Then $(v_j)_j$ is $\Asp$-free, bounded in $L^1$, and satisfies $\dist(v_j,\mathcal K) \le |w_{\eps_j,a_j}| \to 0$ in $L^1$ because $\dircone \in \mathcal K$; yet $\|v_j\|_{L^p} \gtrsim \eps_j^{1/p - s} \to \infty$. Letting $s = s_j \uparrow 1$ at a suitable rate, the same crests blow up in $L\log L$ while still tending to $\dircone$ in $L^1$. There is no contradiction with \Cref{thm:main}: $w_{\eps_j,a_j} \to 0$ in $L^1$, so $(v_j)_j$ is equi-integrable, as it must be. What the construction shows is that the $L^1$-conclusion of \Cref{thm:main} cannot be upgraded to any higher-integrability scale, the obstruction being the arbitrarily thin and tall crests that the wave cone always makes available inside an $L^1$-neighbourhood of $\mathcal K$.
\end{remark}

%%%%%%%%%%%%%%%%%%%%%%%%%%%%%%%%%%%%%%%%%%%%%%%%%%%%%%%%%%%%%%%%%%%%%%%%%%%%%%
%%  APPENDICES
%%%%%%%%%%%%%%%%%%%%%%%%%%%%%%%%%%%%%%%%%%%%%%%%%%%%%%%%%%%%%%%%%%%%%%%%%%%%%%
\appendix
%%%%%%%%%%%%%%%%%%%%%%%%%%%%%%%%%%%%%%%%%%%%%%%%%%%%%%%%%%%%%%%%%%%%%%%%%%%%%%
%%  A.  BOUNDS ON HIGHER-ORDER DERIVATIVES OF THE PSEUDO-INVERSE
%%%%%%%%%%%%%%%%%%%%%%%%%%%%%%%%%%%%%%%%%%%%%%%%%%%%%%%%%%%%%%%%%%%%%%%%%%%%%%
\section{Bounds on higher-order derivatives of the pseudo-inverse}\label{appx:bound}
In this section, we provide a careful derivation of the bounds on the higher-order derivatives of a general Moore--Penrose pseudo-inverse matrix. This computation is instrumental for establishing the explicit power of the distance scaling in \Cref{thm:main}. To stay self-contained, we provide it here in full generality.
Recall that the Moore--Penrose pseudo-inverse $W^{\dagger}$ of $W$ is the unique matrix satisfying $W W^{\dagger} W = W$, $W^{\dagger} W W^{\dagger} = W^{\dagger}$, $(W W^{\dagger})^{*} = W W^\dagger$, and $(W^{\dagger} W)^* = W^{\dagger} W$.
The interested reader will find details on $W^{\dagger}$ in \cite[Section~B.4.4]{luNumericalMatrixDecomposition2021}. We bound the derivatives of $W^{\dagger}$ by a slight extension of the computations of Golub and Pereyra for the first-order case~\cite{golubDifferentiationPseudoInversesNonlinear1973}. Intuitively, every order of differentiation of the pseudo-inverse brings only one power to the estimate. However, a na\"ive application of the product rule suggests a bound with one power more than expected. To obtain the correct power, the induction argument relies on the identities of the pseudo-inverse that benefit from particular cancellations.
\begin{proposition}\label{res:boundPseudo}
	Let $W : \mathbb{R}^d \mysm \{0\} \to \mathrm{End}(\sou)$ be a smooth $0$-homogeneous map. Assume that the rank of $W(\xi)$ is independent of $\xi$. Then for any $k \in \mathbb{N}_*$, there exists a constant $C_k$ depending only on $\sup_{|\xi| = 1} \sup_{|\alpha| \leqslant k} \|\partial^{\alpha} W(\xi)\|$ such that
	\begin{align*}
		\|\partial^{\alpha} W^{\dagger}(\xi)\| \leqslant C_k |\xi|^{-|\alpha|} \|W^{\dagger}(\xi)\|^{|\alpha|+1}
		\qquad \forall \xi \in \mathbb{R}^d \mysm \{0\}, \qquad \forall \alpha \text{ such that } |\alpha| \leqslant k.
	\end{align*}
\end{proposition}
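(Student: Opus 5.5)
The plan is to differentiate the defining identities of the Moore--Penrose inverse and proceed by induction on $|\alpha|$. First I will reduce to $|\xi|=1$ by homogeneity. Since $W$ is $0$-homogeneous, so is $W^\dagger$, hence $\partial^\alpha W^\dagger$ is $(-|\alpha|)$-homogeneous. It therefore suffices to prove $\|\partial^\alpha W^\dagger(\xi)\|\le C_k\|W^\dagger(\xi)\|^{|\alpha|+1}$ on the unit sphere. Constant rank guarantees that $\xi\mapsto W^\dagger(\xi)$ is smooth, so all derivatives exist.

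The key first-order formula is that of Golub--Pereyra. Write $P=WW^\dagger$ (projection onto $\mathrm{Im}\,W$) and $R=W^\dagger W$ (projection onto $\mathrm{Im}\,W^*$). Then
\[
\partial W^\dagger = -W^\dagger(\partial W)W^\dagger + W^\dagger (W^\dagger)^*(\partial W)^*(I-P) + (I-R)(\partial W)^*(W^\dagger)^*W^\dagger .
\]
The first term costs $\|W^\dagger\|^2$. The last two terms naively cost $\|W^\dagger\|^2$ as well, which is consistent with the claim at order one. For higher orders I will avoid differentiating this formula blindly, since the product rule would apparently produce an extra power.

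Instead I will work with the three pieces $W^\dagger$, $P$ and $R$, and prove by induction the stronger statement
\[
\|\partial^\beta P\|+\|\partial^\beta R\|\le C\|W^\dagger\|^{|\beta|}, \qquad \|\partial^\beta W^\dagger\|\le C\|W^\dagger\|^{|\beta|+1}.
\]
The projection estimates come from differentiating $P=WW^\dagger$: each derivative landing on $W^\dagger$ is paired with a $W$, and $\|W\|$ is bounded. Write $\partial^\beta P=\sum\binom{\beta}{\gamma}\partial^{\beta-\gamma}W\,\partial^\gamma W^\dagger$. The worst term is $W\partial^\beta W^\dagger$. To bound it by $\|W^\dagger\|^{|\beta|}$ rather than $\|W^\dagger\|^{|\beta|+1}$, I will use the first-order identity $\partial P=(I-P)(\partial W)W^\dagger+\big((I-P)(\partial W)W^\dagger\big)^*$. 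This identity follows from $P^2=P$, $P^*=P$ and $PW=W$. Iterating it expresses $\partial^\beta P$ through lower derivatives of $P$, of $W$ and of $W^\dagger$, with total order at most $|\beta|-1$ on $W^\dagger$.

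Then I will differentiate the identity $W^\dagger = R\,W^\dagger P$ together with the first-order formula above, applied $|\alpha|-1$ more times via Leibniz. Each term is a product of factors of the form $\partial^{\gamma}W$ (bounded), $\partial^{\gamma} P$ or $\partial^\gamma R$ (costing $\|W^\dagger\|^{|\gamma|}$), and $\partial^\gamma W^\dagger$ (costing $\|W^\dagger\|^{|\gamma|+1}$). Counting powers: the first-order formula contributes two copies of $W^\dagger$ for one derivative. Every further derivative adds at most one power, whether it hits a $W^\dagger$ factor (by induction) or a projection (by the projection estimate). Hence the total is $|\alpha|+1$. The harmless factor $\|W^\dagger\|\ge 1$ lets me absorb lower powers, since $\|W^\dagger\|\ge1$ as noted in the proof of \Cref{res:korntype}. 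This does not hold for a general $W$, but I may replace $\|W^\dagger\|$ by $\max(1,\|W^\dagger\|\,\|W\|)$ and track constants.

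The main obstacle is the power bookkeeping for the projections. Naively $\partial^\beta P$ would cost $\|W^\dagger\|^{|\beta|+1}$, and that single extra power would propagate. The crux is therefore the projection induction, with the cancellation coming from $(I-P)W=0$ and $W(I-R)=0$. I will check carefully that every term in the iterated expansion of $\partial^\beta P$ has exactly as many $W^\dagger$ factors as derivatives.
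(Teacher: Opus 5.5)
Your proposal is correct, but the induction follows a different route from the paper's. The paper writes $P=W^\dagger W$ and $Q=WW^\dagger$, i.e.\ your $R$ and $P$. It differentiates the Penrose identities to \emph{full} order.
\begin{itemize}
\item (H1) comes from $W=QW$ together with $Q=Q^{|\alpha|+1}$ and self-adjointness.
\item (H2) comes from $\partial^\alpha$ of $W^\dagger=W^\dagger WW^\dagger$. The top-order terms $\partial^\alpha W^\dagger\,Q$ and $P\,\partial^\alpha W^\dagger$ are traded for lower-order ones via $W^\dagger=W^\dagger Q=PW^\dagger$.
\item What remains contains the terms $\partial^{\alpha_0}W^\dagger\,W\,\partial^{\alpha_2}W^\dagger$ with $|\alpha_0|+|\alpha_2|=|\alpha|$. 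These naively cost $\|W^\dagger\|^{|\alpha|+2}$. The paper rescues them with the auxiliary bound $\|W\,\partial^\beta W^\dagger\|\le C\|W^\dagger\|^{|\beta|}$, read off from $Q=WW^\dagger$.
\end{itemize}
In addition, the paper's step to (H2) at order $K+1$ consumes (H1) at order $K+1$.

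You instead apply $\partial^{\alpha-e}$ to closed first-order formulas:
\begin{itemize}
\item every term of the Golub--Pereyra formula carries exactly two pseudo-inverse factors;
\item every term of $\partial_e P=(I-P)(\partial_e W)W^\dagger+(\cdots)^*$, and of its analogue $\partial_e R=W^\dagger(\partial_e W)(I-R)+(\cdots)^*$, carries exactly one;
\item the Leibniz expansion only produces derivatives of $W^\dagger,P,R$ of order at most $|\alpha|-1$.
\end{itemize}
So the counts $2+(|\alpha|-1)$ and $1+(|\alpha|-1)$ are immediate. No cancellation has to be found, and (H1), (H2) at order $K+1$ both follow from orders $\le K$ alone.

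The price is that you import the Golub--Pereyra formula, whereas the paper uses only the Penrose identities. This is cheap: the paper's own first-order identity reads, in your notation, $\partial W^\dagger=(\partial R)W^\dagger-W^\dagger(\partial W)W^\dagger+W^\dagger\,\partial P$. Inserting the two projection formulas and using $(I-R)W^\dagger=0=W^\dagger(I-P)$ turns it into Golub--Pereyra in a few lines.

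Two points of housekeeping.
\begin{itemize}
\item Drop the identity $W^\dagger=RW^\dagger P$; it plays no role. Differentiated on its own it regenerates the top-order term $R\,(\partial^\alpha W^\dagger)\,P$. The iterated Golub--Pereyra formula is the whole mechanism for (H2). Likewise, for (H1) it is the iterated projection formula that does the work, not the expansion of $P=WW^\dagger$, which you rightly abandon.
\item Lower powers are most cleanly absorbed via $\|W\|\,\|W^\dagger\|\ge 1$, which follows from $WW^\dagger W=W$ whenever $W\neq 0$. By constant rank, either $W\equiv 0$ (trivial) or $\|W^\dagger\|\ge 1/\sup_{|\xi|=1}\|W\|$ everywhere. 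Lower powers are then absorbed into constants depending on $\sup\|W\|$, which the statement allows, and your $\max(1,\cdot)$ device is unnecessary. The paper performs this absorption tacitly.
\end{itemize}
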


The rest of this appendix is devoted to the proof of \Cref{res:boundPseudo}, which will follow from \Cref{lem:A2} and \Cref{lem:A3} below.
Since $W$ is 0-homogeneous as a function of $\xi$, so is $W^{\dagger}$, and one has
\begin{align*}
	\|\partial^{\alpha} W^{\dagger}(\xi)\| = |\xi|^{-|\alpha|} \|\partial^{\alpha} W^{\dagger}(\xi/|\xi|)\|
	\qquad \forall \xi \in \mathbb{R}^d \mysm \{0\}.
\end{align*}
Hence we can assume that $|\xi| = 1$. Denote $P \coloneqq W^{\dagger} W$ and $Q \coloneqq W W^{\dagger}$ the orthogonal projections on the image of $W^*$ and $W$ respectively. In the sequel of the proof, the letter $C_k$ stands for a nonnegative constant that may change between expressions, but depends solely on the order of derivation $k \in \mathbb{N}_*$ and $\sup_{|\xi| = 1} \|\partial^{\alpha} W(\xi)\|$ for $|\alpha| \leqslant k$. Define the induction properties
\begin{align}
	&\text{There exists } C_k \text{ s.t. } &
	\max(\|\partial^{\alpha} Q\|,\|\partial^{\alpha} P\|) &\leqslant C_k \|W^{\dagger}\|^{|\alpha|} & \text{ if } |\alpha| \leqslant k. \label{ind:QP}\tag{H1\textsubscript{$k$}} \\
	&\text{There exists } C_k \text{ s.t. } &
	\|\partial^{\alpha} W^{\dagger}\| &\leqslant C_k \|W^{\dagger}\|^{|\alpha|+1} & \text{ if } |\alpha| \leqslant k. \label{ind:W}\tag{H2\textsubscript{$k$}}
\end{align}
\begin{lemma}\label{lem:A2}
	The property \eqref{ind:QP} holds for $k = 1$. Moreover, if \eqref{ind:QP}-\eqref{ind:W} hold up to the order $K$, then \eqref{ind:QP} holds at order $K+1$.
\end{lemma}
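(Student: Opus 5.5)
The plan is to exploit the Golub--Pereyra identities for the derivatives of the orthogonal projections $Q = WW^{\dagger}$ and $P = W^{\dagger}W$. These express $\partial_i Q$ and $\partial_i P$ through a \emph{single} factor $W^{\dagger}$ and one derivative of $W$, which gives exactly one power of $\|W^{\dagger}\|$ per derivative. Throughout, as agreed above, we work on $|\xi| = 1$. We use that, $W$ being smooth with constant rank, $W^{\dagger}$, $P$ and $Q$ are smooth away from the origin, so all the derivatives below exist.

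\emph{Step 1: first-order formulas (the case $k=1$).} Differentiating $QW = W$ gives $(\partial_i Q)W = (I-Q)\,\partial_i W$. Right-multiplying by $W^{\dagger}$ yields
\[
(\partial_i Q)\,Q = (I-Q)(\partial_i W) W^{\dagger}.
\]
Since $Q^2 = Q$, we have $\partial_i Q = (\partial_i Q) Q + Q (\partial_i Q)$. Since $Q$ is symmetric, $Q(\partial_i Q) = \big((\partial_i Q) Q\big)^{*}$. Hence
\[
\partial_i Q = A_i + A_i^{*}, \qquad A_i \coloneqq (I-Q)(\partial_i W) W^{\dagger}.
\]
Symmetrically, differentiating $WP = W$ gives $W\,\partial_i P = (\partial_i W)(I-P)$, hence $P\,\partial_i P = W^{\dagger}(\partial_i W)(I-P)$. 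This leads to
\[
\partial_i P = B_i + B_i^{*}, \qquad B_i \coloneqq W^{\dagger}(\partial_i W)(I-P).
\]
As $\|I-Q\|, \|I-P\| \le 1$, both derivatives are bounded by $2\sup_{|\xi|=1}\|\partial_i W\|\,\|W^{\dagger}\|$. This is \eqref{ind:QP} with $k=1$.

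\emph{Step 2: induction.} Assume \eqref{ind:QP}--\eqref{ind:W} hold up to order $K$. Any multi-index of length $K+1$ can be written as $\beta + e_i$ with $|\beta| = K$, and then $\partial^{\beta+e_i} Q = \partial^{\beta} A_i + (\partial^{\beta} A_i)^{*}$. By the Leibniz rule, $\partial^{\beta} A_i$ is a finite sum, with combinatorial coefficients, of the terms
\[
\partial^{\beta_1}(I-Q)\;\partial^{\beta_2 + e_i} W\;\partial^{\beta_3} W^{\dagger}, \qquad \beta_1+\beta_2+\beta_3 = \beta .
\]
Each factor is controlled as follows:
\begin{itemize}
\item the first factor is at most $C_K\|W^{\dagger}\|^{|\beta_1|}$ by \eqref{ind:QP}, or at most $1$ when $\beta_1 = 0$;
\item the middle factor is bounded by the sup of derivatives of $W$ up to order $K+1$;
\item the last factor is at most $C_K \|W^{\dagger}\|^{|\beta_3|+1}$ by \eqref{ind:W}.
\end{itemize}
The exponents add up to $|\beta_1| + |\beta_3| + 1 \le K+1$. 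When $|\beta_2|>0$ the total is strictly smaller than $K+1$, and we upgrade it to exactly $K+1$ using $\|W^{\dagger}\| \ge 1$. This holds in our application, as noted in the proof of \Cref{res:korntype}. In general one can either keep track of $\max(1,\|W^{\dagger}\|)$, or observe that $\|W^{\dagger}\|\ge 1/\|W\|$ and that $\|W\|$ is controlled by the sup of $W$ on the sphere, absorbing that factor into $C_{K+1}$. The same computation applies to $\partial^{\beta} B_i$, giving \eqref{ind:QP} at order $K+1$.

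\emph{Main obstacle.} The delicate point is avoiding the spurious extra power of $\|W^{\dagger}\|$ that a naive differentiation of $Q = WW^{\dagger}$ would produce: it would require the derivative of $W^{\dagger}$ of order $K+1$, which is not yet available. The representation $\partial_i Q = A_i + A_i^{*}$ resolves this. It puts the first derivative onto $W$, so that only derivatives of $W^{\dagger}$ of order at most $K$ enter. This is precisely why the induction closes with \eqref{ind:W} assumed only up to order $K$.
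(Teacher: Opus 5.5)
Your proof is correct. Your base case is the paper's: its formula for $|\alpha|=1$ is exactly $\partial_i Q = A_i + A_i^*$. You get the constant $2$ instead of $4$ by using $\|I-Q\|\le 1$, and you write $B_i$ directly where the paper passes to $W^*$.

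The induction step is organised differently. The paper never differentiates $W^{\dagger}$. Differentiating $W = QW$ gives \eqref{parQQ}, which expresses $\partial^{\alpha}Q\,Q$ through $\partial^{\alpha}W$, lower-order derivatives of $Q$, and a single undifferentiated factor $W^{\dagger}$. It then recovers $\partial^{\alpha}Q$ from the Leibniz expansion of $Q = Q^{|\alpha|+1}$ in \eqref{parQ}, together with $Q\,\partial^{\alpha}Q = (\partial^{\alpha}Q\,Q)^*$. That step uses only \eqref{ind:QP} up to order $K$, so the projection bounds close on their own, independently of \eqref{ind:W}.

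You instead differentiate the first-order identity $\partial_i Q = A_i + A_i^*$ another $K$ times. This is shorter: one Leibniz expansion and no power-of-$Q$ device. But it brings in $\partial^{\beta_3}W^{\dagger}$ with $|\beta_3|\le K$, so it genuinely needs \eqref{ind:W} up to order $K$. That is legitimate, since the lemma grants it. The interleaving with \Cref{lem:A3}, which uses \eqref{ind:QP} at order $K+1$ and \eqref{ind:W} up to order $K$, creates no circularity.

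Both arguments must raise lower powers of $\|W^{\dagger}\|$ to the top power. The paper does this silently, for instance for the term $\|\partial^{\alpha}W\|\,\|W^{\dagger}\|$ coming from \eqref{parQQ}. Your explicit justification via $\|W^{\dagger}\| \ge 1/\|W\|$, with $\sup_{|\xi|=1}\|W\|$ absorbed into the constant, makes that step rigorous in the general setting of \Cref{res:boundPseudo}.
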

\begin{proof}
	Taking derivatives of the identity $W = Q W$, we get
	\begin{align*}
		\partial^{\alpha} W
		= \partial^{\alpha} \left(Q W\right)
		= \sum_{\alpha_0 + \alpha_1 = \alpha} \partial^{\alpha_0} Q \partial^{\alpha_1} W
		= \sum_{\substack{\alpha_0 + \alpha_1 = \alpha \\ |\alpha_0| < |\alpha|}} \partial^{\alpha_0} Q \partial^{\alpha_1} W + \partial^{\alpha} Q W.
	\end{align*}
	Isolating $\partial^{\alpha} Q W$ and multiplying by $W^{\dagger}$, we obtain
	\begin{align}\label{parQQ}
		\partial^{\alpha} Q Q
		= \partial^{\alpha} Q W W^{\dagger}
		= \partial^{\alpha} W W^{\dagger} - \sum_{\substack{\alpha_0 + \alpha_1 = \alpha \\ |\alpha_0| < |\alpha|}} \partial^{\alpha_0} Q \partial^{\alpha_1} W W^{\dagger}.
	\end{align}
	On the other hand, since $Q = Q^{|\alpha|+1}$ for any multi-index $\alpha$, there holds
	\begin{align}\label{parQ}
		\partial^{\alpha} Q
		= \partial^{\alpha} Q^{|\alpha|+1}
		= \sum_{\substack{\sum_{j=0}^{|\alpha|} \alpha_j = \alpha \\ |\alpha_j| < |\alpha|}} \prod_{j=0}^{|\alpha|} \partial^{\alpha_j} Q + \sum_{j=0}^{|\alpha|-1} Q^j \partial^{\alpha} Q Q + Q \partial^{\alpha} Q.
	\end{align}
	The terms $\partial^{\alpha} Q Q$ are given by \eqref{parQQ}. Now, as $Q$ is self-adjoint, there holds $Q \partial^{\alpha} Q = Q^* \partial^{\alpha} Q^* = \left(\partial^{\alpha} Q Q\right)^*$. In the case where $|\alpha| = 1$, we obtained the expression
	\begin{align*}
		\partial^{\alpha} Q
		= \partial^{\alpha} Q Q + Q \partial^{\alpha} Q
		= \partial^{\alpha} W W^{\dagger} - Q \partial^{\alpha} W W^{\dagger} + \left[\partial^{\alpha} W W^{\dagger} - Q \partial^{\alpha} W W^{\dagger}\right]^*.
	\end{align*}
	Therefore $\|\partial^{\alpha} Q\| \leqslant 4 \|\partial^{\alpha} W\| \|W^{\dagger}\|$. As $P = W^{\dagger} W = (W^* (W^*)^{\dagger})^*$, the same computations provide an expression for $\partial^{\alpha} P^*$ with $W^*$ in place of $W$. As $\|\partial^{\alpha} W\| = \|\partial^{\alpha} W^*\|$ and $\|W^{\dagger}\| = \|(W^*)^{\dagger}\|$, the same bound holds, and we conclude that the property \eqref{ind:QP} holds for $k = 1$. Now, assuming that \eqref{ind:QP}-\eqref{ind:W} hold up to the order $K \geqslant 1$, let $\alpha$ be such that $|\alpha| = K+1$. We may use \eqref{parQQ} to write
	\begin{align*}
		\|\partial^{\alpha} Q Q\|
		\leqslant \big(\|\partial^{\alpha} W\| + \sum_{\substack{\alpha_0 + \alpha_1 = \alpha \\ |\alpha_0| < |\alpha|}} \|\partial^{\alpha_0} Q\| \| \partial^{\alpha_1} W\|\big) \|W^{\dagger}\|
		\leqslant C_{K+1} \|W^{\dagger}\|^{|\alpha|}.
	\end{align*}
	Using \eqref{parQ}, this yields
	\begin{align*}
		\|\partial^{\alpha} Q\|
		& \leqslant \sum_{\substack{\sum_{j=0}^{|\alpha|} \alpha_j = \alpha \\ |\alpha_j| < |\alpha|}} \prod_{j=0}^{|\alpha|} (C_{K} \|W^{\dagger}\|^{|\alpha_j|}) + |\alpha| C_{K} \|W^{\dagger}\|^{|\alpha|} \\
		& \qquad + C_{K+1} \|W^{\dagger}\|^{|\alpha|}
		\leqslant C_{K+1} \|W^{\dagger}\|^{|\alpha|}.
	\end{align*}
	The same bound holds for $P$ with $W^*$ in place of $W$, so that \eqref{ind:QP} holds for $k = K+1$.
\end{proof}
We now complete the induction.
\begin{lemma}\label{lem:A3}
	The property \eqref{ind:W} holds for $k = 1$. Moreover, if \eqref{ind:QP}-\eqref{ind:W} hold up to the order $K$, and \eqref{ind:QP} up to $k = K+1$, then \eqref{ind:W} holds at order $K+1$.
\end{lemma}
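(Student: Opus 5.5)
We need to prove Lemma A3: (H2_1) holds, and the induction step for $W^\dagger$. The natural tool is the Golub–Pereyra identity for the derivative of the pseudo-inverse, which writes $W^\dagger$ through the projections and avoids the extra power a naïve product rule would produce.

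\emph{The identity.} Write
\[
W^\dagger = W^\dagger Q = P W^\dagger,
\]
and use $W^\dagger W W^\dagger = W^\dagger$. Differentiating $W^\dagger = P W^\dagger Q$ does not close directly, since $\partial^\alpha W^\dagger$ appears on both sides. Instead I would use the decomposition
\[
\partial^\alpha W^\dagger = P\,\partial^\alpha W^\dagger\, Q + (I-P)\,\partial^\alpha W^\dagger + P\,\partial^\alpha W^\dagger\,(I-Q),
\]
and compute each piece separately.

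\emph{The three pieces.}
\begin{itemize}
\item For the middle (range) piece, differentiate $W^\dagger W W^\dagger = W^\dagger$ and sandwich by $P$ and $Q$. With $P W^\dagger Q = W^\dagger$, the top-order terms give $P\,\partial^\alpha W^\dagger\, Q = -W^\dagger\,\partial^\alpha W\,W^\dagger + {}$lower-order terms. More simply, differentiate $W^\dagger W = P$ and multiply on the right by $W^\dagger$:
\[
\partial^\alpha W^\dagger\, Q = \partial^\alpha P\, W^\dagger - \sum_{|\alpha_0|<|\alpha|} \partial^{\alpha_0} W^\dagger\,\partial^{\alpha_1} W\, W^\dagger .
\]
\item For the off-range piece, differentiate $W^\dagger = W^\dagger Q$:
\[
\partial^\alpha W^\dagger\,(I-Q) = \sum_{|\alpha_1|\ge1} \partial^{\alpha_0} W^\dagger\,\partial^{\alpha_1} Q .
\]
Here only $\partial^\alpha W^\dagger$ itself multiplied by $Q$ could contain the top-order term. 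I would handle it using the previous bullet: $\partial^\alpha W^\dagger = \partial^\alpha W^\dagger\, Q + \partial^\alpha W^\dagger (I-Q)$, and the term $\partial^{\alpha_0} W^\dagger\,\partial^{\alpha_1}Q$ with $\alpha_0=0$ is $W^\dagger\,\partial^\alpha Q$, which is harmless.
\end{itemize}

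\emph{The estimates.} Taken together, $\partial^\alpha W^\dagger$ is a sum of terms of the following types:
\begin{itemize}
\item $\partial^\alpha P\, W^\dagger$, bounded by $C\|W^\dagger\|^{|\alpha|+1}$ via (H1$_{K+1}$);
\item $\partial^{\alpha_0}W^\dagger\,\partial^{\alpha_1}W\,W^\dagger$ with $|\alpha_0|<|\alpha|$, bounded by $C\|W^\dagger\|^{|\alpha_0|+1}\cdot C\cdot\|W^\dagger\| \le C\|W^\dagger\|^{|\alpha|+1}$. This last inequality needs $|\alpha_0|+2\le|\alpha|+1$, which holds since $|\alpha_0|\le|\alpha|-1$;
\item $\partial^{\alpha_0}W^\dagger\,\partial^{\alpha_1}Q$ with $|\alpha_1|\ge1$, bounded by $C\|W^\dagger\|^{|\alpha_0|+1+|\alpha_1|}$.
\end{itemize}
Throughout I use $\|W^\dagger\|\ge1$ to absorb lower powers, as noted in the proof of \Cref{res:korntype}.

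\emph{Base case.} For $k=1$ these formulas give
\[
\partial W^\dagger = -W^\dagger\,\partial W\,W^\dagger + \partial P\,W^\dagger\text{-type terms} + W^\dagger\,\partial Q ,
\]
which is bounded by $C\|W^\dagger\|^2$ using (H1$_1$) from \Cref{lem:A2}.

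\emph{Main obstacle.} The main obstacle is making sure the top-order term $\partial^\alpha W^\dagger$ never appears with a coefficient that cannot be isolated. The splitting by $Q$ and $I-Q$ above is designed to do exactly that, with $\partial^\alpha Q$ already controlled at order $K+1$ by hypothesis.
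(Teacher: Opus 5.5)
Your argument is correct, and for $|\alpha|\ge 2$ it takes a cleaner route than the paper's. What actually carries it is not the three-piece decomposition you announce. It is the two-term split $\partial^\alpha W^\dagger=\partial^\alpha W^\dagger Q+\partial^\alpha W^\dagger(I-Q)$:
\begin{itemize}
\item the first term comes from differentiating $P=W^\dagger W$ and multiplying on the right by $W^\dagger$;
\item the second comes from differentiating $W^\dagger=W^\dagger Q$.
\end{itemize}
Together these give
\[
\partial^\alpha W^\dagger=\partial^\alpha P\,W^\dagger-\sum_{\substack{\alpha_0+\alpha_1=\alpha\\|\alpha_0|<|\alpha|}}\partial^{\alpha_0}W^\dagger\,\partial^{\alpha_1}W\,W^\dagger+\sum_{\substack{\alpha_0+\alpha_1=\alpha\\|\alpha_0|<|\alpha|}}\partial^{\alpha_0}W^\dagger\,\partial^{\alpha_1}Q .
\]
This coincides with the paper's formula when $|\alpha|=1$ and is a different representation for $|\alpha|\ge2$.

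The paper instead differentiates $W^\dagger=W^\dagger WW^\dagger$ and eliminates $\partial^\alpha W^\dagger Q$ and $P\partial^\alpha W^\dagger$ using $W^\dagger=W^\dagger Q$ and $W^\dagger=PW^\dagger$. This leaves cross terms $\partial^{\alpha_0}W^\dagger\,W\,\partial^{\alpha_2}W^\dagger$ with $\alpha_0,\alpha_2\neq0$, whose naive bound has one power too many. The paper therefore needs an additional identity, obtained by differentiating $Q=WW^\dagger$, to show $\|W\partial^{\alpha_2}W^\dagger\|\le C\|W^\dagger\|^{|\alpha_2|}$.

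In your formula every term has at most one differentiated factor of $W^\dagger$, so (H1$_{K+1}$) and (H2$_K$) give the power $|\alpha|+1$ directly. The cancellation you exploit is that $\partial^\alpha W^\dagger$ composed with $W$ is controlled by $\partial^\alpha P$. This is the mirror image of the paper's bound on $W\partial^{\alpha_2}W^\dagger$ through $\partial^{\alpha_2}Q$. You apply it once, at top order, rather than to every cross term, which makes your route shorter.

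Two clean-ups are needed.
\begin{itemize}
\item Drop the three-piece decomposition, whose piece $(I-P)\partial^\alpha W^\dagger$ is never treated. Also drop the ``sandwich'' computation of $P\partial^\alpha W^\dagger Q$. Its ``lower-order terms'' contain exactly the cross terms $\partial^{\alpha_0}W^\dagger W\partial^{\alpha_2}W^\dagger$ that overshoot by a power, so that version would not close without the paper's extra estimate.
\item You cite $\|W^\dagger\|\ge1$, but this holds only in the application of \Cref{res:korntype}, where $W=\Pi(\xi)P_V$ has norm at most one. It fails for the general $W$ of \Cref{res:boundPseudo}. To absorb the lower powers (needed for the terms with $|\alpha_0|<|\alpha|-1$), use instead $\|W^\dagger\|\ge 1/\sup_{|\xi|=1}\|W(\xi)\|$. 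This follows from $W^\dagger=W^\dagger WW^\dagger$ and is admissible because $C_k$ may depend on $\sup_{|\xi|=1}\|W(\xi)\|$; the paper absorbs lower powers the same way.
\end{itemize}
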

\begin{proof}
	Taking derivatives of the identity $W^{\dagger} = W^{\dagger} W W^{\dagger}$, there holds
	\begin{align}\label{main}
		\partial^{\alpha} W^{\dagger}
		= \partial^{\alpha} W^{\dagger} Q + P \partial^{\alpha} W^{\dagger} + \sum_{\substack{\alpha_0 + \alpha_1 + \alpha_2 = \alpha \\ |\alpha_0|,|\alpha_2| < |\alpha|}} \partial^{\alpha_0} W^{\dagger} \partial^{\alpha_1} W \partial^{\alpha_2} W^{\dagger}.
	\end{align}
	From $W^{\dagger} = W^{\dagger} Q$ and $W^{\dagger} = P W^{\dagger}$, we get
	\begin{align}
		\partial^{\alpha} W^{\dagger}
		&= \sum_{\alpha_0 + \alpha_1 = \alpha} \partial^{\alpha_0} W^{\dagger} \partial^{\alpha_1} Q
		= \partial^{\alpha} W^{\dagger} Q + \sum_{\substack{\alpha_0 + \alpha_1 = \alpha \\ |\alpha_0| < |\alpha|}} \partial^{\alpha_0} W^{\dagger} \partial^{\alpha_1} Q, \label{WQ} \\
		\partial^{\alpha} W^{\dagger}
		&= \sum_{\alpha_0 + \alpha_1 = \alpha} \partial^{\alpha_0} P \partial^{\alpha_1} W^{\dagger}
		= P \partial^{\alpha} W^{\dagger} + \sum_{\substack{\alpha_0 + \alpha_1 = \alpha \\ |\alpha_1| < |\alpha|}} \partial^{\alpha_0} P \partial^{\alpha_1} W^{\dagger}. \label{PW}
	\end{align}
	Replacing the terms $\partial^{\alpha} W^{\dagger} Q$ and $P \partial^{\alpha} W^{\dagger}$ in \eqref{main} by their expression in \eqref{WQ}-\eqref{PW}, we obtain
	\begin{align*}
		\partial^{\alpha} W^{\dagger}
		& = \sum_{\substack{\alpha_0 + \alpha_1 = \alpha \\ |\alpha_0| < |\alpha|}} \partial^{\alpha_0} W^{\dagger} \partial^{\alpha_1} Q \\
		& \quad + \sum_{\substack{\alpha_0 + \alpha_1 = \alpha \\ |\alpha_1| < |\alpha|}} \partial^{\alpha_0} P \partial^{\alpha_1} W^{\dagger} - \sum_{\substack{\alpha_0 + \alpha_1 + \alpha_2 = \alpha \\ |\alpha_0|,|\alpha_2| < |\alpha|}} \partial^{\alpha_0} W^{\dagger} \partial^{\alpha_1} W \partial^{\alpha_2} W^{\dagger}.
	\end{align*}
	In particular, for $|\alpha| = 1$, this reduces to
	\begin{align*}
		\partial^{\alpha} W^{\dagger}
		= W^{\dagger} \partial^{\alpha} Q + \partial^{\alpha} P W^{\dagger} - W^{\dagger} \partial^{\alpha} W W^{\dagger}.
	\end{align*}
	Since \eqref{ind:QP} holds for $k = 1$, we deduce that $\|\partial^{\alpha} W^{\dagger}\| \leqslant C_1 \|W^{\dagger}\|^{2}$, so that \eqref{ind:W} holds for $k = 1$. Assuming now that \eqref{ind:QP} holds for $k \leqslant K+1$, and \eqref{ind:W} holds for $k \leqslant K$, let $\alpha$ be a multi-index with $|\alpha| = K+1$. Then, for constants $C$ depending on $(C_{j})_{j \leqslant K}$, one has
	\begin{align}\label{almostfinal}
		\|\partial^{\alpha} W^{\dagger}\|
		&\leqslant \sum_{\substack{\alpha_0 + \alpha_1 = \alpha \\ |\alpha_0| < |\alpha|}} C \|W^{\dagger}\|^{|\alpha_0|+1} \|W^{\dagger}\|^{|\alpha_1|} + \sum_{\substack{\alpha_0 + \alpha_1 = \alpha \\ |\alpha_1| < |\alpha|}} C \|W^{\dagger}\|^{|\alpha_0|} \|W^{\dagger}\|^{|\alpha_1|+1} \notag \\
		&\quad+ \sum_{\substack{\alpha_0 + \alpha_1 + \alpha_2 = \alpha \\ |\alpha_0|,|\alpha_2| < |\alpha| \\ |\alpha_1| > 0}} C \|W^{\dagger}\|^{|\alpha_0|+1} \|W^{\dagger}\|^{|\alpha_2|+1} +
		\sum_{\substack{\alpha_0 + \alpha_2 = \alpha \\ |\alpha_0|,|\alpha_2| < |\alpha|}} \|\partial^{\alpha_0} W^{\dagger} W \partial^{\alpha_2} W^{\dagger}\| \notag \\
		&= C \|W^{\dagger}\|^{|\alpha|+1} + \sum_{\substack{\alpha_0 + \alpha_2 = \alpha \\ |\alpha_0|,|\alpha_2| < |\alpha|}} C \|W^{\dagger}\|^{|\alpha_0|+1} \|W \partial^{\alpha_2} W^{\dagger}\|.
	\end{align}
	If we were to bound the term $\|W \partial^{\alpha_2} W^{\dagger}\|$ by $C_{|\alpha_2|} \|W^{\dagger}\|^{|\alpha_2|+1}$ using the induction assumption, then we would obtain an exponent of $|\alpha_0|+1 + |\alpha_2|+1 = |\alpha|+2$ on the term $\|W^{\dagger}\|$ in the last sum, instead of $|\alpha|+1$. However, the term $W$ allows for a better bound: indeed, taking derivatives of the identity $Q = W W^{\dagger}$, we get
	\begin{align*}
		\partial^{\alpha} Q
		= \sum_{\alpha_0 + \alpha_1 = \alpha} \partial^{\alpha_0} W \partial^{\alpha_1} W^{\dagger}
		= \sum_{\substack{\alpha_0 + \alpha_1 = \alpha \\ |\alpha_1| < |\alpha|}} \partial^{\alpha_0} W \partial^{\alpha_1} W^{\dagger} + W \partial^{\alpha} W^{\dagger}.
	\end{align*}
	Isolating the term $W \partial^{\alpha} W^{\dagger}$ and using \eqref{ind:QP}-\eqref{ind:W}, we get
	\begin{align*}
		\|W \partial^{\alpha} W^{\dagger}\|
		\leqslant C_{|\alpha|} \|W^{\dagger}\|^{|\alpha|} + \sum_{\substack{\alpha_0 + \alpha_1 = \alpha \\ |\alpha_1| < |\alpha|}}C_{|\alpha_1|} \|W^{\dagger}\|^{|\alpha_1|+1}
		\leqslant C_{K+1} \|W^{\dagger}\|^{|\alpha|}.
	\end{align*}
	Plugging this in \eqref{almostfinal}, we conclude to the bound $\|\partial^{\alpha} W^{\dagger}\| \leqslant C_{K+1} \|W^{\dagger}\|^{|\alpha|+1}$.
\end{proof}
%%%%%%%%%%%%%%%%%%%%%%%%%%%%%%%%%%%%%%%%%%%%%%%%%%%%%%%%%%%%%%%%%%%%%%%%%%%%%%
%%  B.  TRANSFER OF MULTIPLIERS TO THE TORUS
%%%%%%%%%%%%%%%%%%%%%%%%%%%%%%%%%%%%%%%%%%%%%%%%%%%%%%%%%%%%%%%%%%%%%%%%%%%%%%
\section{Transfer of multipliers to the torus}\label{appx:transfer}
\begin{proof}[Proof of \Cref{res:transfer}]
	By \cite[Lemma~2]{raitaPotentialsAquasiconvexity2019}, there exists a linear homogeneous differential operator $\Bsp : \regC^{\infty}(\R^d;\presou) \to \regC^{\infty}(\R^d;\sou)$ such that $\mathrm{Im} \,\Bsp \subset \ker \Asp$, and any $w$ as in the statement writes as $w = \Bsp v$ for some periodic $v \in \regC^{\infty}(\mathbb{T}^d;\presou)$. Write
	\[
	\Bsp = \sum_{|\gamma| = \ell} B_{\gamma}\, \partial^{\gamma}, \qquad \ell \coloneqq \deg \Bsp,
	\]
	and let $v_{\square} \in \regC^{\infty}(\R^d;\presou)$ be the periodic extension of $v$, so that $w_{\square} \coloneqq \Bsp v_{\square}$ is the periodic extension of $w$. We may assume $\ell \ge 1$; if $\ell = 0$ the operator $\Bsp$ is algebraic, the commutator \eqref{transfer:commutator} below vanishes identically, and the argument simplifies.

	\emph{Step 1: the cut-off and its commutator.} Fix once and for all $\chi \in \regC^{\infty}(\R;[0,1])$ with $\chi \equiv 1$ on $(-\infty,0]$ and $\chi \equiv 0$ on $[1,\infty)$, and put, for $N \in \mathbb N_*$,
	\[
	\varphi_N(x) \coloneqq \prod_{i=1}^{d} \chi(|x_i| - N) .
	\]
	Each $\varphi_N$ is smooth --- near $\{x_i = 0\}$ the $i$-th factor is constantly $1$, because $N \ge 1$ --- takes values in $[0,1]$, equals $1$ on $[-N,N]^d$ and vanishes outside $[-(N+1),N+1]^d$. Being assembled from translates and reflections of the single profile $\chi$, it also satisfies
	\begin{align}\label{transfer:phibound}
		\begin{aligned}
			C_{\varphi} &\coloneqq \sup_{N \in \mathbb N_*} \ \max_{1 \le |\beta| \le \ell} \ \|\partial^{\beta}\varphi_N\|_{\infty} \ <\ \infty , \\
			\mathrm{spt}\,\partial^{\beta}\varphi_N &\subset A_N \coloneqq [-(N+1),N+1]^d \mysm (-N,N)^d
		\end{aligned}
	\end{align}
	for every multi-index $\beta \ne 0$.

	Set $w_N \coloneqq \Bsp(\varphi_N v_{\square})$. It is smooth, compactly supported, and $\Asp$-free because $\mathrm{Im}\,\Bsp \subset \ker \Asp$; the hypothesis of the lemma therefore applies to it:
	\begin{align}\label{transfer:hyp}
		\|w_N\|_{L^{1,\infty}} \leqslant \const{V} \|\dist(w_N,V)\|_{L^1}.
	\end{align}
	By the Leibniz rule, $w_N$ differs from $\varphi_N w_{\square}$ exactly by a commutator,
	\begin{align}\label{transfer:commutator}
		w_N - \varphi_N w_{\square}
		\ =\ [\Bsp,\varphi_N]\, v_{\square}
		\ =\ \sum_{|\gamma| = \ell} \ \sum_{0 \neq \beta \leqslant \gamma} \binom{\gamma}{\beta} \, B_{\gamma} \ \partial^{\beta}\varphi_N \ \partial^{\gamma - \beta} v_{\square} ,
	\end{align}
	which is a differential operator of order at most $\ell - 1$ applied to $v_{\square}$, with coefficients supported in $A_N$ by \eqref{transfer:phibound}. Consequently
	\[
	\|w_N - \varphi_N w_{\square}\|_{L^1}
	\ \leqslant\ C(\Bsp)\, C_{\varphi} \sum_{|\alpha| \leqslant \ell - 1} \|\partial^{\alpha} v_{\square}\|_{L^1(A_N)} .
	\]
	The annulus $A_N$ is covered by at most $C_d\, N^{d-1}$ unit cubes with integer vertices, and $v_{\square}$ is $\mathbb Z^d$-periodic, so that $\|\partial^{\alpha} v_{\square}\|_{L^1(A_N)} \leqslant C_d\, N^{d-1} \|\partial^{\alpha} v\|_{L^1(\mathbb T^d)}$ for every $\alpha$. Altogether,
	\begin{align}\label{transfer:commbound}
		\|w_N - \varphi_N w_{\square}\|_{L^1} \ \leqslant\ C(v,\Bsp,d)\, N^{d-1} .
	\end{align}

	\emph{Step 2: lower bound.} On the open cube $(-N,N)^d$ the function $\varphi_N$ is identically $1$, hence $w_N = \Bsp v_{\square} = w_{\square}$ there. Since $w_{\square}$ is $\mathbb Z^d$-periodic and $(-N,N)^d$ is the union of $(2N)^d$ unit cells, for every $\lambda > 0$
	\[
	\mathscr L^d(\{|w_N| > \lambda\}) \ \geqslant\ \mathscr L^d\big(\{|w_{\square}| > \lambda\} \cap (-N,N)^d\big)
	\ =\ (2N)^d \, \mathscr L^d_{\mathbb T^d}(\{|w| > \lambda\}) ,
	\]
	and therefore
	\begin{align}\label{transfer:lower}
		\|w_N\|_{L^{1,\infty}} \ \geqslant\ (2N)^d \, \|w\|_{L^{1,\infty}(\mathbb T^d)} .
	\end{align}

	\emph{Step 3: upper bound.} As $\dist(\,\cdot\,,V)$ is $1$-Lipschitz, \eqref{transfer:commutator} gives the pointwise bound $\dist(w_N,V) \leqslant \dist(\varphi_N w_{\square},V) + |w_N - \varphi_N w_{\square}|$. Since $\dist(\,\cdot\,,V)$ is positively $1$-homogeneous and $\varphi_N$ takes values in $[0,1]$, one has $\dist(\varphi_N(x)\var, V) = \varphi_N(x)\dist(\var,V) \leqslant \dist(\var,V)$ for every $\var \in \sou$; as $\{\varphi_N > 0\}$ is covered by $(2(N+1))^d$ unit cells, integrating and inserting \eqref{transfer:commbound} yields
	\begin{align}\label{transfer:upper}
		\|\dist(w_N,V)\|_{L^1}
		\ \leqslant\ (2(N+1))^d \|\dist(w,V)\|_{L^1(\mathbb T^d)} + C(v,\Bsp,d)\, N^{d-1} .
	\end{align}

	\emph{Step 4: conclusion.} Inserting \eqref{transfer:lower} and \eqref{transfer:upper} into \eqref{transfer:hyp} and dividing by $(2N)^d$,
	\[
	\|w\|_{L^{1,\infty}(\mathbb T^d)}
	\ \leqslant\ \const{V} \left[ \Big(\frac{N+1}{N}\Big)^{d} \|\dist(w,V)\|_{L^1(\mathbb T^d)} \ +\ \frac{C(v,\Bsp,d)}{2^{d}\, N} \right] .
	\]
	Letting $N \to \infty$, so that $((N+1)/N)^d \to 1$ and the last term vanishes, the conclusion follows.
\end{proof}

%%%%%%%%%%%%%%%%%%%%%%%%%%%%%%%%%%%%%%%%%%%%%%%%%%%%%%%%%%%%%%%%%%%%%%%%%%%%%%
%%  ACKNOWLEDGEMENTS
%%%%%%%%%%%%%%%%%%%%%%%%%%%%%%%%%%%%%%%%%%%%%%%%%%%%%%%%%%%%%%%%%%%%%%%%%%%%%%
\section*{Acknowledgements}
Both authors are supported by the European Research Council (ERC) Starting Grant ``ConFine'' (Grant No.\ 101078057) at the Universit\`a di Pisa. The first author wishes to thank Filip Rindler for sharing the question that motivated this work and for several fruitful discussions along the years.

%%%%%%%%%%%%%%%%%%%%%%%%%%%%%%%%%%%%%%%%%%%%%%%%%%%%%%%%%%%%%%%%%%%%%%%%%%%%%%
%%  DATA AVAILABILITY AND CONFLICT OF INTEREST
%%%%%%%%%%%%%%%%%%%%%%%%%%%%%%%%%%%%%%%%%%%%%%%%%%%%%%%%%%%%%%%%%%%%%%%%%%%%%%
%\section*{Data availability and conflict of interest}
%The datasets and source files supporting the conclusions of this article are available in the arXiv repository (\url{https://arxiv.org/abs/2606.16762}) in \texttt{.tex} and \texttt{.pdf} formats. The authors declare that they have no conflict of interest.

%%%%%%%%%%%%%%%%%%%%%%%%%%%%%%%%%%%%%%%%%%%%%%%%%%%%%%%%%%%%%%%%%%%%%%%%%%%%%%
%%  BIBLIOGRAPHY
%%%%%%%%%%%%%%%%%%%%%%%%%%%%%%%%%%%%%%%%%%%%%%%%%%%%%%%%%%%%%%%%%%%%%%%%%%%%%%
\printbibliography[heading=bibintoc]
\end{document}